\newtheorem{theorem}{Theorem}[section]
\newtheorem{lemma}[theorem]{Lemma}
\newtheorem{corollary}[theorem]{Corollary}
\newtheorem{proposition}[theorem]{Proposition}
\theoremstyle{definition}
\newtheorem{remark}[theorem]{Remark}
\numberwithin{equation}{section}
 \theoremstyle{plain}    
 \numberwithin{equation}{section} 
 \numberwithin{figure}{section} 
 \theoremstyle{plain}    
 \theoremstyle{plain}    
 \theoremstyle{remark}    
 \newtheorem*{acknowledgement*}{Acknowledgement} 
\newcommand{\cB}{{\mathcal B}}
\newcommand{\cD}{{\mathcal D}}
\newcommand{\cE}{{\mathcal E}}
\newcommand{\cF}{{\mathcal F}}
\newcommand{\cH}{{\mathcal H}}
\newcommand{\cL}{{\mathcal L}}
\newcommand{\cN}{{\mathcal N}}
\newcommand{\cR}{{\mathcal R}}
\newcommand{\cX}{{\mathcal X}}
\newcommand{\cY}{{\mathcal Y}}
\newcommand{\te}{{\theta}}
\newcommand{\om}{{\omega}}
\newcommand{\ve}{{\varepsilon}}
\newcommand{\del}{{\delta}}
\newcommand{\gam}{{\gamma}}
\newcommand{\Gam}{{\Gamma}}
\newcommand{\vf}{{\varphi}}
\newcommand{\sig}{{\sigma}}
\newcommand{\al}{{\alpha}}
\newcommand{\be}{{\beta}}
\newcommand{\La}{{\Lambda}}
\newcommand{\bbN}{{\mathbb N}}
\newcommand{\bbZ}{{\mathbb Z}}
\newcommand{\bbI}{{\mathbb I}}
\begin{document}
\title[]{Ergodic Theorems for Nonconventional Arrays and an Extension
of the Szemer\' edi Theorem}%
 \vskip 0.1cm 
 \author{ Yuri Kifer\\
\vskip 0.1cm
 Institute  of Mathematics\\
Hebrew University\\
Jerusalem, Israel}%
\address{
Institute of Mathematics, The Hebrew University, Jerusalem 91904, Israel}
\email{ kifer@math.huji.ac.il}%

\thanks{A part of this work was done during the author's
visit to University of Pennsylvania in Fall of 2016 as the Bogen family 
visiting professor. }
\subjclass[2010]{Primary: 37A30 Secondary: 37A45, 28D05}%
\keywords{Szem\' eredi theorem, multiple recurrence, nonconventional averages,
triangular arrays}%
\dedicatory{  }
 \date{\today}
\begin{abstract}\noindent
The paper is primarily concerned with the asymptotic behavior as 
$N\to\infty$ of averages of
nonconventional arrays having the form $N^{-1}\sum_{n=1}^N\prod_{j=1}^\ell 
T^{P_j(n,N)}f_j$ where $f_j$'s are bounded measurable functions, $T$
is an invertible measure preserving transformation and $P_j$'s are polynomials
of $n$ and $N$ taking on integer values on integers. It turns out that when
$T$ is weakly mixing and $P_j(n,N)=p_jn+q_jN$ are linear or, more generally,
 have the form $P_j(n,N)=P_j(n)+Q_j(N)$ for some integer valued polynomials
  $P_j$ and $Q_j$ then the above averages converge in $L^2$ but for general
polynomials $P_j$ the $L^2$ convergence can be ensured even in the case 
$\ell=1$ only when $T$ is strongly mixing. Studying also
 weakly mixing and compact extensions and relying on Furstenberg's structure
 theorem we derive an extension of Szemer\' edi's theorem saying that for
 any subset of integers $\La$ with positive upper density there exists a 
 subset $\cN_\La$ of positive integers having uniformly bounded gaps such
 that for $N\in\cN_\La$ and at least $\ve N,\,\ve>0$ of $n$'s all numbers
 $p_jn+q_jN,\, j=1,...,\ell,$ belong to $\La$. We obtain
also a version of these results for several commuting transformations which
yields a corresponding extension of the multidimensional Szemer\' edi theorem.

\end{abstract}

\maketitle
\markboth{Yu.Kifer}{Extension of the Szemer\' edi theorem} 
\renewcommand{\theequation}{\arabic{section}.\arabic{equation}}
\pagenumbering{arabic}
\section{Introduction}\label{sec1}\setcounter{equation}{0}

In 1975 Szemer\' edi proved the conjecture of Erd\H os and Turan saying
that any set of integers with positive upper density contains arbitrary long
arithmetic progressions. In 1977 Furstenberg \cite{Fu1} published an ergodic
theory proof of this result which turned out to be a corollary of a multiple
recurrence statement for measure preserving transformations.

Namely, let $(X,\cB,\mu)$ be a probability space, $T:X\to X$ be an invertible 
$\mu$-preserving transformation and $A\in\cB$ be a set of positive 
$\mu$-measure.
Furstenberg proved that in these circumstances for any positive integer $\ell$,
\begin{equation}\label{1.1}
\liminf_{N\to\infty}\frac 1N\sum_{n=1}^N\mu(\bigcap_{j=0}^\ell T^{-jn}A)>0
\end{equation}
which, in fact, implies existence of infinitely many arithmetic progressions
in any set of integers having positive upper density (for a nice exposition
of this result see \cite{FKO}).

An important part of the proof of (\ref{1.1}) was to show that
\begin{equation}\label{1.2}
\frac 1N\sum_{n=1}^N\prod_{j=1}^\ell T^{jn}f_j\stackrel{L^2}
  {\longrightarrow}\prod_{j=1}^\ell\int f_jd\mu\quad\mbox{as}\quad N\to\infty
\end{equation}
(where $T^mf(x)=f(T^mx)$) provided $T$ is a measure preserving weakly mixing
invertible transformation and $f_j$'s are bounded measurable functions. In
fact, (\ref{1.1}) required more general results concerning weak mixing and
compact extensions together with a structure theorem describing all possible
extensions. Observe that in \cite{Be} the $L^2$ convergence (\ref{1.2}) for
weakly mixing transformations was extended from powers $jn$ to arbitrary 
essentially distinct polynomials $P_j(n)$ (i.e. having nonconstant
pairwise differences) taking on integer values on integers.

In this paper we consider the averages of the form
\begin{equation}\label{1.3}
\frac 1N\sum_{n=1}^N\prod_{j=1}^\ell T^{P_j(n,N)}f_j
\end{equation}
where $f_j$'s are bounded measurable functions, $T$ is an invertible measure
preserving transformation and $P_j(n,N),\, j=1,...,\ell,$ are essentially
distinct polynomials of $n$ and $N$ taking on integer values on integers.
It is customary in
probability to call sums whose summands depend on the number $N$ of summands
by the name (triangular) arrays and it seems appropriate to use the same name
for sums in (\ref{1.3}) too while the term "nonconventional" comes from
\cite{Fu3}. 

First, we study the linear case $P_j(n,N)=p_jn+q_jN$ where $p_j$'s are 
distinct and $q_j$'s are arbitrary integers. It turns out that under the
weak mixing assumption on $T$,
\begin{equation}\label{1.4}
  \frac 1N\sum_{n=1}^N\prod_{j=1}^\ell T^{p_jn+q_jN}f_j\stackrel{L^2}
  {\longrightarrow}\prod_{j=1}^\ell\int f_jd\mu\quad\mbox{as}\quad N\to\infty.
  \end{equation} 
In particular, when $\ell=2k,\, q_i=-p_i=k-i+1$ for $i=1,...,k$ and 
$p_i=i-k,q_i=0$ for $i=k+1,...,2k$ the left hand side of (\ref{1.4}) takes on
 the following symmetric form
\begin{equation}\label{1.5}
\frac 1N\sum_{n=1}^N\prod_{i=1}^kT^{i(N-n)}f_{k-i+1}\prod_{i=1}^kT^{in}f_{k+i}.
\end{equation}
It is known by \cite{HK} that when $q_j=0$ for all $j=1,...,\ell$ then the
left hand side of (\ref{1.4}) still converges in $L^2$ also without the
weak mixing assumption on $T$ but not necessarily to the right hand side
of (\ref{1.4}). On the other hand, a simple example shows that for arbitrary
$q_j$'s there is no convergence of the left hand side of (\ref{1.4}) if
$T$ is not weakly mixing. Indeed, take $k=1$ in (\ref{1.5}) and let $T$ be the
rotation of the unit circle by one half of it while $f_1=f_2=f=\bbI_A$ be the 
indicator of an arc $A$ having length less than one half of the circle. Then
$(T^nf)(T^{N-n}f)=\bbI_{T^{-n}A\cap T^{-(N-n)}A}$ and this expression equals
the indicator $\bbI_A$ of $A$ or the indicator $\bbI_{TA}$ of $TA$ (depending
on the parity of $n$) if $N$ is even while it equals zero for otherwise. Thus,
the averages (\ref{1.5}) will be equal to
$\frac 12(\bbI_A+\bbI_{TA})$ for each even $N$ and 0 for each odd $N$.

We complement the above study by considering weak mixing and compact extensions
and relying on the structure theorem from \cite{Fu1} and \cite{Fu2} we conclude
that for any invertible measure preserving transformation $T$, numbers $p_j,q_j$
as above and a set $A$ of positive measure there exists a subset $\cN_A\subset
\bbN$ of positive integers with uniformly bounded gaps, called syndetic set,
 such that
\begin{equation}\label{1.6}
\liminf_{N\to\infty,\, N\in\cN_A}\frac 1N\sum_{n=1}^N\mu(\bigcap_{j=0}^\ell
T^{-(p_jn+q_jN)}A)>0
\end{equation}
while this does not hold true, in general, if we take the limit over all 
positive integers which can be seen from the above example. In fact, we also
show that (\ref{1.6}) follows by a shorter argument relying on recent advanced
 results from \cite{BLL} and \cite{Au} concerning convergencies along F\o lner
 sequences in multidimensionar multiple recurrence results but our direct 
 proof still has a value since, in particular, it concentrates attention on 
 nonconventional arrays and the convergence results like (\ref{1.4}) cannot
 be derived from the above references. 
 
 By the standard Furstenberg's argument (\ref{1.6}) implies an extended version
  of Szemer\' edi's theorem saying that for any subset of integers $\La$ with
  positive upper density there exists a syndetic subset $\cN_{\La}\subset\bbN$
  such that for all $N\in\cN_\La$ and at least $\ve N,\,\ve>0$ of $n$'s, all
  numbers $p_jn+q_jN,\, j=1,...,\ell,$ belong to $\La$. We obtain also more
  general results concerning families of commuting transformations $T_j,\, 
  \hat T_j,\, j=1,2,...,\ell,$ studying the limits of
\[
\frac 1N\sum_{n=1}^N\mu(\bigcap_{j=0}^\ell T_j^{-n}\hat T_j^{-N}A)\,\,\mbox{and of}
\,\,\frac 1N\sum_{n=1}^N\prod_{j=1}^\ell T_j^{n}\hat T_j^{N}f_j
\]
where $A$ and $f_j,\, j=1,...,\ell,$ are as above.

If we consider $P_j(n,N)=P_j(n)+Q_j(N)$ in (\ref{1.3}), where $P_j$'s are
essentially distinct and $Q_j$'s are arbitrary polynomials, then the 
convergence in $L^2$ of nonconventional averages (\ref{1.3}) to the product 
of integrals can be established under the weak mixing assumption. On the other
hand, already for $\ell=1$ and $P_1(n,N)=nN$  weak mixing is not sufficient, 
in general, for the $L^2$ convergence of averages (\ref{1.3}) though strong
mixing suffices here. For $\ell>1$ and general polynomials $P_j(n,N)$ we show
$L^2$ convergence of the expression (\ref{1.3}) assuming strong $2\ell$-mixing
 of $T$.

\begin{acknowledgement*} The author is greatful to anonymous referees for many
 helpful suggestions which led to improvements of the original version of this
  paper. 
\end{acknowledgement*}

\section{Preliminaries and main results}\label{sec2}\setcounter{equation}{0}

Let $(X,\cB,\mu)$ be a separable probability space and $T:X\to X$ be an 
invertible measure $\mu$ preserving transformation. In studying polynomial
nonconventional averages (\ref{1.3}) we start with the linear case
$P_j(n,N)=p_jn+q_jN$. Let $f_i,\, i=0,1,...,\ell,$ be bounded measurable 
functions on $X$. The example described in Introduction shows that, in general,
the limit
 \begin{equation}\label{2.1}
 \lim_{N\to\infty}\frac 1N\sum_{n=1}^N\prod_{j=1}^\ell 
 T^{p_jn+q_jN}f_j
 \end{equation}
 does not exist in the $L^2$-sense. Still, we will
   see that the limit (\ref{2.1}) exists in the $L^2$ sense if $T$ 
   is weakly mixing which means that the product transformation 
   $T\times T$ on $(X\times X,\,\cB\times\cB,\,\mu\times\mu)$ is ergodic
   (see, for instance, \cite{Fu2}). Thus we have the following $L^2$ 
   ergodic theorem for nonconventional arrays.
   
   \begin{theorem}\label{thm2.1}
   Suppose that an invertible transformation $T$ is weakly mixing, $f_j,\,
   j=1,...,\ell,$ are bounded measurable functions and $p_j,\, q_j,\, 
   j=1,...,\ell,$ are integers such that $p_j$'s are distinct (ordered
   without loss of generality as $p_1<p_2<...<p_\ell$) and $q_j$'s
   are arbitrary. Then
   \begin{equation}\label{2.2}
   \lim_{N\to\infty}\frac 1N\sum_{n=1}^N\prod_{j=1}^\ell T^{p_jn+q_jN}
   f_j=\prod_{j=1}^\ell\int f_jd\mu\,\,\quad\mbox{in}\,\, L^2(X,\mu).
   \end{equation}
   \end{theorem}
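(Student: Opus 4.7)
The plan is to proceed by a Furstenberg--Bergelson style induction on $\ell$, combining multilinearity with a Hilbert-space van der Corput estimate, and carefully tracking the $N$-dependent shifts $q_jN$ through the reduction.

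First, by multilinearity in $(f_1,\dots,f_\ell)$ and the splitting $f_j = (f_j - \int f_j\,d\mu) + \int f_j\,d\mu$, I would reduce to showing that if some $\int f_{j_0}\,d\mu = 0$ (say $j_0 = \ell$), then $\phi_N := \frac{1}{N}\sum_{n=1}^N\prod_{j=1}^\ell T^{p_jn+q_jN}f_j$ tends to $0$ in $L^2$. The base case $\ell = 1$ (with $p_1\neq 0$, as the theorem tacitly requires) follows from the identity $\phi_N = T^{q_1N}\bigl(\frac{1}{N}\sum_{n=1}^N T^{p_1n}f_1\bigr)$ together with ergodicity of $T^{p_1}$ (a consequence of weak mixing) and the isometry of $T^{q_1N}$.

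For the inductive step I would set $u_n^{(N)} := \prod_{j=1}^\ell T^{p_jn+q_jN}f_j$ and apply van der Corput's inequality to get, for $1 \leq H < N$,
\[
\|\phi_N\|_{L^2}^2 \leq \frac{C}{H} + \frac{2}{H}\sum_{h=1}^{H-1}\Bigl|\frac{1}{N}\sum_{n=1}^{N-h}\langle u_{n+h}^{(N)}, u_n^{(N)}\rangle\Bigr|
\]
with $C$ depending only on $\max_j\|f_j\|_\infty$. A direct computation using $T$-invariance of $\mu$ gives
\[
\langle u_{n+h}^{(N)}, u_n^{(N)}\rangle = \int g_{1,h}\prod_{j=2}^\ell T^{(p_j-p_1)n+(q_j-q_1)N}g_{j,h}\,d\mu, \qquad g_{j,h} := T^{p_jh}f_j\cdot\bar f_j.
\]
The average in $n$ is an $L^2$ inner product of $g_{1,h}$ with a nonconventional array of $\ell-1$ factors whose leading coefficients $p_j - p_1$, $j \geq 2$, are distinct and nonzero; the inductive hypothesis then yields, for each fixed $h$,
\[
\lim_{N\to\infty}\frac{1}{N}\sum_{n=1}^N\langle u_{n+h}^{(N)}, u_n^{(N)}\rangle = \prod_{j=1}^\ell\langle T^{p_jh}f_j, f_j\rangle.
\]
Weak mixing of $T^{p_\ell}$ together with $\int f_\ell\,d\mu = 0$ will give $\frac{1}{H}\sum_{h=1}^H |\langle T^{p_\ell h}f_\ell, f_\ell\rangle| \to 0$ as $H\to\infty$, while the other factors $|\langle T^{p_jh}f_j, f_j\rangle|$ are uniformly bounded by $\|f_j\|_\infty^2$. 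Sending $N\to\infty$ and then $H\to\infty$ in the van der Corput bound completes the induction.

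The hard part will be verifying that the van der Corput reduction genuinely preserves the nonconventional array form: after shifting by $T^{-(p_1n+q_1N)}$ under the integral, the resulting expression is again an average of the type $\frac{1}{N}\sum_n \prod_{j\ge 2} T^{(p_j-p_1)n + (q_j-q_1)N}g_{j,h}$, with $\ell-1$ factors and $N$-dependent shifts of exactly the same form. This structural preservation is the essential mechanism that allows the induction to close within the class of statements being proved, despite the $N$-dependence in the shifts that at first sight precludes a direct application of Furstenberg's classical weak-mixing PET.
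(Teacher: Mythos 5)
Your argument is correct and is essentially the paper's own proof: Theorem \ref{thm2.1} is obtained there as the trivial-factor case of Proposition \ref{prop3.1}, whose inductive step $A_{m-1}\Rightarrow B_m$ is precisely your van der Corput reduction --- shift by $T^{-(p_1n+q_1N)}$ to produce an array of $\ell-1$ factors with exponents $(p_j-p_1)n+(q_j-q_1)N$ and functions $f_j\cdot T^{p_jh}f_j$, apply the inductive hypothesis for fixed $h$, and then kill $\frac 1H\sum_{h}|\langle T^{p_{j_0}h}f_{j_0},f_{j_0}\rangle|$ using weak mixing of $T^{p_{j_0}}$ and $\int f_{j_0}d\mu=0$. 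The only difference is that the paper runs the induction in the relative (weak mixing extension) setting, alternating between the scalar assertion (\ref{3.2}) and the norm assertion (\ref{3.3}); for the trivial factor this two-step induction collapses into the single one you describe.
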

   The condition of Theorem \ref{thm2.1} that $p_j,\, j=1,...,\ell,$ are
   distinct is important for (\ref{2.2}). Indeed, let 
   $p_1=p_2=p$ and $\int f_1d\mu=0$. Then
   \[
   \frac 1{N}\int\sum_{n=1}^NT^{pn+q_1N}f_1T^{pn+q_2N}f_2d\mu
   =\int f_1T^{(q_2-q_1)N}f_2d\mu,
   \]
   which does not converge to zero as $N\to\infty$, in general, unless
   $T$ is (strongly) mixing (and $q_1\ne q_2$) while under weak mixing
   only convergence ouside of a set of $N$'s having zero density can be
   ensured. We observe (as pointed out by the referee) that Theorem 
   \ref{thm2.1} actually follows from Theorem 3.2 in the recent paper 
   \cite{Ko} though motivation and goals of the latter paper seem to be
   different from ours.
   In fact, we will study convergence in a more general situation of weak
   mixing extensions and, in addition, will consider also compact extensions,
   which together with the structure theorem from \cite{Fu1} and \cite{Fu2} 
   will produce the following result. 
\begin{theorem}\label{thm2.2} Let $p_j,q_j,\, j=0,1,...,\ell,$ be integers
such that $p_0=q_0=0,\, p_j\ne 0$ if $j\ne 0$ and $p_1<p_2<...<p_\ell$. Then
for any $A\in\cB$ with $\mu(A)>0$ there exists an infinite subset of positive
integers $\cN_A\subset\bbN$ with uniformly bounded gaps such that
\begin{equation}\label{2.3}
\liminf_{N\to\infty,\, N\in\cN_A}\frac 1N\sum_{n=1}^N\mu\big(\bigcap_{j=0}^\ell
T^{-(p_jn+q_jN)}A\big)>0.
\end{equation}
\end{theorem}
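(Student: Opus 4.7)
The plan is to follow the Furstenberg--Katznelson--Ornstein scheme for multiple recurrence, adapted to the two-parameter nonconventional setting where the iterates $T^{p_j n + q_j N}$ depend both on the summation index $n$ and the length $N$ of the average. Call a system $(X,\cB,\mu,T)$ an \emph{SZ$^*$-system} for the tuple $(p_j,q_j)$ if the conclusion (\ref{2.3}) holds along some syndetic $\cN_A$ for every $A$ with $\mu(A)>0$. One then shows that the class of SZ$^*$-systems contains the trivial system and is closed under (i) weakly mixing extensions, (ii) compact extensions, and (iii) inverse limits. Combined with the Furstenberg structure theorem (\cite{Fu1}, \cite{Fu2}), which decomposes every $(X,\cB,\mu,T)$ as an inverse limit built by transfinite alternation of these two extension types from the trivial system, this yields the theorem.

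The weakly mixing case is handled by establishing a relative version of Theorem \ref{thm2.1}: for a weakly mixing extension $\pi:(X,\cB,\mu,T)\to(Y,\cD,\nu,S)$ the $L^2$-limit of the array $\frac1N\sum_{n=1}^N\prod_j T^{p_jn+q_jN}f_j$ is the product of fiberwise conditional expectations $\prod_j\bbE(f_j|\cD)$. Applied with $f_j=\bbI_A$, this shows that any positive lower bound valid on $Y$ along a syndetic $\cN$ lifts, after integration, to a positive lower bound on $X$ along the same $\cN$; thus SZ$^*$ is inherited. The inverse-limit step is standard: approximate $\bbI_A$ in $L^2$ by a function measurable with respect to a factor in the tower and use that small $L^2$-perturbations change each average by a small amount uniformly in $N$.

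The substantive step is the compact-extension case. Let $\pi:X\to Y$ be a compact extension with $Y$ already SZ$^*$, and let $A\subset X$ have $\mu(A)>0$. Choose $B\subset Y$ with $\nu(B)>0$ on which the conditional density of $A$ is bounded below by some $\alpha>0$, and apply the SZ$^*$ property on $Y$ to $B$ to get a syndetic $\cN_B$ and $\delta>0$ with $\frac1N\sum_{n=1}^N\nu\bigl(\bigcap_{j=0}^\ell S^{-(p_jn+q_jN)}B\bigr)\ge 2\delta$ for $N\in\cN_B$. Compactness of the extension means that $\{T^m\bbI_A : m\in\bbZ\}$ is, fiberwise in $L^2(\mu_y)$, relatively compact, so for each $\ve>0$ there is a syndetic set of ``fiberwise return times'' $R_\ve\subset\bbZ$. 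One then passes to the $\ell$-fold product system on the fibers and uses Furstenberg's lemma on multiple recurrence in compact group extensions to find, inside any large enough box of $(n,N)$-values whose $S$-projection hits the $Y$-intersection, a positive fraction for which all the integers $p_1n+q_1N,\dots,p_\ell n+q_\ell N$ simultaneously lie in the appropriate fiberwise return set; combining with the $\alpha$-lower bound on $A|B$ produces the required positive liminf along a syndetic refinement of $\cN_B$.

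The main obstacle is precisely this joint-return analysis in the compact-extension step. Unlike in the classical Szemer\'edi proof, where one only needs simultaneous closeness of $\ell$ arithmetic-progression iterates, here the linear forms $p_j n+q_j N$ mix the two parameters, so the syndetic set must be chosen in $N$ uniformly over a positive density of $n$'s, and the fiberwise almost-periodicity has to be exploited in both variables simultaneously. Once this two-parameter joint recurrence is in place, Steps (i) and (iii) are routine adaptations of the classical arguments, and the Furstenberg structure theorem closes the induction.
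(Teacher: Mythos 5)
Your overall scheme is exactly the paper's: define the class of systems satisfying the conclusion (the paper calls them GSZ), show it is closed under limits of chains of factors, under relatively weakly mixing extensions and under compact extensions, and close the induction with the Furstenberg structure theorem. Your weak-mixing and inverse-limit steps correspond to Proposition 3.1 / Corollary 3.2 and to the $\sup_\al\cB_\al$ argument in the paper. One small imprecision there: the relative limit theorem does not assert that the averages converge to $\prod_j E(f_j|\cD)$ (the averages on the factor need not converge at all); it asserts that the array is asymptotically equal in $L^2$ to the same array with each $f_j$ replaced by $E(f_j|\cY)$, which is what one actually integrates against. It is also worth recording, as the paper does, that weakly mixing extensions would preserve even the stronger statement with $\liminf$ over all $N$; the syndetic set is forced entirely by the compact step.

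That compact-extension step, which you flag as ``the main obstacle'' and then do not carry out, is the whole content of the theorem beyond the classical case, so as it stands the proof has a genuine gap. Moreover the tools you name are not the right ones: FKO compact extensions are defined by fiberwise total boundedness of $\{T^m f\}$ in $L^2(\mu_y)$, not as compact group extensions, and a ``syndetic set of fiberwise return times'' intersected over $j$ does not use the recurrence on $Y$ and does not produce simultaneous returns of the vector $(T^{p_1n+q_1N}f,\dots,T^{p_\ell n+q_\ell N}f)$. The paper's mechanism (adapting Theorem 9.1 of \cite{FKO}) is: the family $\{(f,T^{p_1n+q_1N}f,\dots,T^{p_\ell n+q_\ell N}f)\}_{n,N\in\bbZ}$ is uniformly totally bounded in $\oplus_{j}L^2(\mu_y)$; choose a maximal $\ve_0$-separated subfamily indexed by finitely many pairs $(n_j,N_j)$, $j=1,\dots,M$; for $(n,N)$ with $y\in\bigcap_l S^{-(p_ln+q_lN)}A_3$ the translated family indexed by $(n+n_j,N+N_j)$ is again maximal separated, hence $\ve_0$-dense, so some member is $\ve_0$-close to $(f,\dots,f)$ and
$\mu_y\big(\bigcap_{l}T^{-(p_l(n+n_j)+q_l(N+N_j))}A\big)>\mu(A)/3$
for some $j=j(y)$. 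Summing over $j$, integrating, and averaging over $n\le N$ shows that for each $N$ in the factor's syndetic set $\cN$ there is some $j$ for which the average taken at the \emph{shifted} time $N+N_j$ is bounded below by $\ve/2M$. This shift is precisely why one obtains only a syndetic $\cN_A$ (with gaps bounded by those of $\cN$ plus $2\max_j|N_j|$) rather than all $N$, consistent with the counterexample in the Introduction. Without this two-parameter pigeonhole argument the induction does not close.
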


As the example in Introduction shows this statement does not hold true, 
in general, if we take $\liminf$ over all $N\to\infty$. On the other hand, 
if $q_j=0$ for all $j$ then (\ref{2.1}) was proved in \cite{Fu1} (see also 
\cite{FKO}) with
$\liminf$ over all $N\to\infty$ and it was shown there how such result yields 
the Szemer\' edi type theorem. Recall briefly the latter argument. Let
$\{ 0,1\}^\bbZ=\{\om=(\om_i):\,\om_i\in\{ 0,1\},\,-\infty<i<\infty\}$ be the
space of sequences, $(T\om)_i=\om_{i+1}$ be the left shift and consider the
special sequence $\bar\om=(\bar\om_i)_{i=1}^\infty$ where $\bar\om_i=1$ if and
 only if $i\in\La)$ with $\La\subset\bbZ$ being a subset of integers with a 
 positive upper density (called, also the upper Banach density), i.e.,
 \begin{equation}\label{2.4}
 \lim_{n\to\infty}\frac {|\La\cap [a_n,b_n)|}{(b_n-a_n)}=d>0
 \end{equation}
 for some sequence of intervals with $b_n-a_n\to\infty$ as $n\to\infty$, 
 denoting by $|\Gam|$ the number of elements in a set $\Gam$. Take $X=$the
 closure in $\{ 0,1\}^\bbZ$ of $\{ T^n\bar\om\}^\infty_{n=-\infty}$ then
 any weak limit $\mu$ of the sequence of measures $\mu_n=(b_n-a_n)^{-1}
 \sum_{j=a_n}^{b_n}\del_{T^j\bar\om}$ (where $\del_\om$ is the unit mass 
 at $\om$) is a $T$-invariant probability measure
 on $X$ and if $A=X\cap\{\om:\,\om_0=1\}$ then $\mu(A)=d>0$. 
 
 It is easy to see that $\La$ contains an arithmetic progression of length 
 $\ell$ if and
 only if $\bigcap_{j=0}^{\ell-1}T^{-jb}A$ is nonempty for some $b\ne 0$. More
 generally, $\La$ contains all numbers $a+p_jn+q_jN,\, j=0,1,...,\ell,$ for
 some $a\in\La$ if and only if $\bigcap_{j=0}^\ell T^{-(p_jn+q_jN)}A$ is nonempty.
 Thus, Theorem \ref{thm2.2} yields the following result.
 
 \begin{corollary}\label{cor2.3} Let $\La$ be a subset of nonnegative integers
 with a positive upper density and $p_j,q_j,\, j=0,1,...,\ell,$ be integers
 satisfying conditions of Theorem \ref{thm2.2}. Then there exist $\ve>0$ and 
 an infinite set of positive integers $\cN_\La$ with uniformly 
 bounded gaps such that for any $N\in\cN_\La$
 the interval $[0,N]$ contains not less than $\ve N$ integers $n$ with the
 property that for some $a_n$,
 \begin{equation}\label{2.5}
 a_n+p_jn+q_jN\in\La\quad\mbox{for all}\quad j=0,1,...,\ell.
 \end{equation}
 In particular, if $\ell=2k$, $q_j=-p_j=k-j+1$ for $j=1,...,k$ and $p_j=j-k,\,
 q_j=0$ for $j=k+1,...,2k$ then for at least $\ve N,\, N\in\cN_\La$ integers
 $n$ in the interval $[0,N]$ the set $\La$ contains arithmetic progressions
 with length $k+1$ of both step $n$ and of step $N-n$.
 \end{corollary}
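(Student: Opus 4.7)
The plan is to feed Theorem \ref{thm2.2} the dynamical system produced by Furstenberg's correspondence principle and then translate the resulting measure estimate into the stated counting statement about $\La$.

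I would begin by reconstructing the correspondence already sketched in the paragraph preceding the corollary. Let $\bar\om\in\{0,1\}^\bbZ$ be the indicator of $\La$ (extended by $\bar\om_i=0$ for $i\le 0$), $T$ the left shift, $X$ the closure of the two-sided orbit $\{T^n\bar\om:n\in\bbZ\}$, and $\mu$ a weak-$\ast$ limit point of the empirical measures
\begin{equation*}
\mu_n=\frac{1}{b_n-a_n}\sum_{j=a_n}^{b_n-1}\del_{T^j\bar\om}
\end{equation*}
along intervals $[a_n,b_n)$ realizing the upper density of $\La$. With $A=X\cap\{\om:\om_0=1\}$, the restriction of $T$ to $X$ is invertible and $\mu$-preserving, $A$ is a clopen cylinder, and $\mu(A)\ge d>0$. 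Applying Theorem \ref{thm2.2} then produces a set $\cN_A\subset\bbN$ with uniformly bounded gaps and a constant $c>0$ such that
\begin{equation*}
\frac{1}{N}\sum_{n=1}^N\mu\Bigl(\bigcap_{j=0}^\ell T^{-(p_jn+q_jN)}A\Bigr)\ge c
\end{equation*}
for all sufficiently large $N\in\cN_A$. Set $\ve=c/2$. Since each summand lies in $[0,1]$, the set
\begin{equation*}
S_N:=\Bigl\{n\in[1,N]:\mu\Bigl(\bigcap_{j=0}^\ell T^{-(p_jn+q_jN)}A\Bigr)>0\Bigr\}
\end{equation*}
satisfies $|S_N|\ge\ve N$ once $N$ is large, and discarding the finitely many smaller $N$'s from $\cN_A$ enlarges gaps only by a bounded amount, so the resulting tail $\cN_\La\subset\cN_A$ is still syndetic.

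The remaining step is to convert positivity of the measure into membership in $\La$. For each $n\in S_N$ the set $B_{n,N}:=\bigcap_{j=0}^\ell T^{-(p_jn+q_jN)}A$ depends on only finitely many coordinates and is therefore clopen in $X$; having positive $\mu$-measure it is a nonempty open set meeting $\operatorname{supp}(\mu)$. Because $\mu$ is a weak-$\ast$ limit of empirical measures concentrated on shifts $T^j\bar\om$ with $j\to\infty$, every point of $\operatorname{supp}(\mu)$ is a limit of some $T^{m_k}\bar\om$ with $m_k\to\infty$; openness of $B_{n,N}$ then lets us pick $a_n=m$ arbitrarily large with $T^{a_n}\bar\om\in B_{n,N}$, and in particular with $a_n+p_jn+q_jN\ge 1$ for every $j$. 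That membership reads $\bar\om_{a_n+p_jn+q_jN}=1$, i.e.\ $a_n+p_jn+q_jN\in\La$ for $j=0,1,\dots,\ell$, which is exactly (\ref{2.5}). The symmetric specialisation is a direct substitution: for $j=1,\dots,k$ one has $p_jn+q_jN=(k-j+1)(N-n)$, yielding the arithmetic progression $a_n,a_n+(N-n),\dots,a_n+k(N-n)$ of length $k+1$ and step $N-n$ in $\La$, while the indices $j=k+1,\dots,2k$ give $p_jn+q_jN=(j-k)n$, yielding the arithmetic progression $a_n,a_n+n,\dots,a_n+kn$ of length $k+1$ and step $n$ in $\La$.

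The only mildly delicate point in this plan is arranging $a_n$ to be nonnegative and large enough that $a_n+p_jn+q_jN$ lies in the index range where $\bar\om$ genuinely records $\La$; this is handled by picking the approximating shift $m$ large, which openness of $B_{n,N}$ permits. Beyond that, the argument is the standard Furstenberg reduction, with Theorem \ref{thm2.2} doing all the heavy lifting.
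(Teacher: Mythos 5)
Your proposal is correct and follows essentially the same route as the paper: the Furstenberg correspondence principle (orbit closure of $\bar\om$, weak-$\ast$ limit of the empirical measures, the cylinder set $A$ with $\mu(A)=d>0$), an application of Theorem \ref{thm2.2}, the observation that an average of $[0,1]$-valued terms being $\ge c$ forces at least $cN$ terms to be positive, and the translation of nonemptiness of the clopen set $\bigcap_{j}T^{-(p_jn+q_jN)}A$ back into membership in $\La$. The one point you flag as delicate is in fact vacuous: any $m$ with $T^m\bar\om\in\bigcap_{j}T^{-(p_jn+q_jN)}A$ automatically satisfies $m=m+p_0n+q_0N\in\La\subset\bbZ_{\ge 0}$ and $m+p_jn+q_jN\in\La$ for all $j$, so no largeness of $a_n$ needs to be arranged.
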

 
 Clearly, the above corollary does not hold true, in general, if we replace
 $\cN_\La$ by all positive integers. Indeed, let $\La$ be the set of all
 even numbers then $a+n$ and $a+(N-n)$ cannot both belong to $\La$ if $N$ is
 odd since then $a+n$ and $a+(N-n)$ cannot be both even.

   Next, we will discuss an extension of the above results to families of
   commuting transformations, which will yield also a multidimensional version
   of Corollary \ref{cor2.3}. Let $G$ be a multiplicative free finitely
   generated abelian group acting on $X$ by measure $\mu$-preserving 
   transformations which are necessarily invertible. Any such group is 
   isomorphic to a $d$-dimensional integer lattice $\bbZ^d$ group.  
   Let $f_i,\, i=0,1,...,\ell,$ be bounded measurable functions on $X$.
    As in the case of one transformation, in general, the limit
    \begin{equation}\label{2.6}
    \lim_{N\to\infty}\frac 1N\sum_{n=1}^N\prod_{j=1}^\ell T^n_j\hat T^N_jf_j
    \end{equation}
    does not exists if $N\to\infty$ over all $N$. Nevertheless,
     we will see that the limit
    (\ref{2.6}) exists in the $L^2$ sense if the abelian group $G$ is totally
    weak mixing, i.e. it consists of weakly mixing transformations with the
    only exception of the identity.
    
    \begin{theorem}\label{thm2.4} Suppose that distinct and different from
    the identity (id) transformations $T_1,...,T_\ell$ belong to a totally weak
    mixing free finitely generated abelian group $G$ acting on $(X,\cB,\mu)$
     by measure preserving
    transformations. Let $\hat T_1,...,\hat T_\ell$ be invertible 
    $\mu$-preserving transformations of $X$, which commute with each other
    and with $T_1,...,T_\ell$. Then for any bounded
    measurable functions $f_j,\, j=0,1,...,\ell$,
    \begin{equation}\label{2.7}
    \lim_{N\to\infty}\frac 1N\sum_{n=0}^N\prod_{j=1}^\ell T_j^n\hat T_j^N
    f_j=\prod_{j=1}^\ell\int f_jd\mu\,\,\quad\mbox{in}\,\, L^2(X,\mu)
    \end{equation}
    where $T_0=\hat T_0=\mbox{id}$.
    \end{theorem}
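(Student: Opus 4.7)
The plan is a proof by induction on $\ell$ using a van der Corput argument. The base case $\ell = 1$ reduces to the mean ergodic theorem: since $\hat T_1^N$ is an $L^2$-isometry commuting with $T_1$ and fixing constants, $\|N^{-1}\sum_{n=0}^N T_1^n\hat T_1^N f_1 - \int f_1\,d\mu\|_2 = \|N^{-1}\sum_{n=0}^N T_1^n(f_1 - \int f_1\,d\mu)\|_2$, and the right side tends to zero by ergodicity of the weakly mixing $T_1$. For the inductive step, I expand $f_j = \int f_j\,d\mu + g_j$ and use multilinearity: each resulting term either produces the desired limit $\prod_j\int f_j\,d\mu$ or reduces, via the inductive hypothesis applied to a proper sub-collection of the $T_j$'s, to proving that $N^{-1}\sum_n\prod_{j=1}^\ell T_j^n\hat T_j^N f_j \to 0$ in $L^2$ whenever $\int f_1\,d\mu = 0$.

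Set $u_n^{(N)} := \prod_{j=1}^\ell T_j^n\hat T_j^N f_j$. A van der Corput estimate for arrays reduces this last convergence to verifying
\[
\lim_{H\to\infty}\limsup_{N\to\infty}\frac{1}{H}\sum_{h=1}^H\left|\frac{1}{N}\sum_{n=1}^N\langle u_{n+h}^{(N)}, u_n^{(N)}\rangle\right| = 0.
\]
To compute the inner product, apply the $\mu$-preserving change of variable induced by $T_1^n\hat T_1^N$; since all $T$'s and $\hat T$'s commute, setting $S_j := T_j T_1^{-1}$, $\hat S_j := \hat T_j\hat T_1^{-1}$ and $F_j^h := (T_j^h f_j)\cdot f_j$ for $j\ge 2$, one obtains
\[
\langle u_{n+h}^{(N)}, u_n^{(N)}\rangle = \int (T_1^h f_1\cdot f_1)\prod_{j=2}^\ell S_j^n\hat S_j^N F_j^h\,d\mu.
\]

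The elements $S_2,\ldots,S_\ell$ are pairwise distinct non-identity elements of the totally weakly mixing group $G$, and the $\hat S_j$'s commute with each other and with all the $S_j$'s, so the inductive hypothesis applied to the $\ell - 1$ functions $F_j^h$ yields
\[
\frac{1}{N}\sum_{n=1}^N\prod_{j=2}^\ell S_j^n\hat S_j^N F_j^h \to \prod_{j=2}^\ell\int F_j^h\,d\mu \quad\text{in}\quad L^2.
\]
Multiplying by the bounded function $T_1^h f_1\cdot f_1$ and integrating gives
\[
\lim_{N\to\infty}\frac{1}{N}\sum_n\langle u_{n+h}^{(N)}, u_n^{(N)}\rangle = \int T_1^h f_1\cdot f_1\,d\mu\cdot\prod_{j=2}^\ell\int T_j^h f_j\cdot f_j\,d\mu.
\]
Weak mixing of $T_1$ together with $\int f_1\,d\mu = 0$ gives $H^{-1}\sum_{h=1}^H|\int T_1^h f_1\cdot f_1\,d\mu|\to 0$, while the remaining $\ell-1$ correlation factors are uniformly bounded, so the van der Corput hypothesis is verified.

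The main obstacle is the $N$-dependence of $u_n^{(N)}$ through the $\hat T_j^N$ factors, which prevents a direct appeal to the classical Furstenberg--Bergelson--Leibman theorem for fixed functions in commuting weakly mixing systems. The van der Corput computation above bypasses this by producing an inner-product expression of exactly the same structural form with $\ell - 1$ factors and shifted transformations $S_j, \hat S_j$, so that the induction closes on the same statement. A routine but necessary check is that $S_j = T_jT_1^{-1}\ne\mathrm{id}$ (immediate from $T_j\ne T_1$) and that $\{S_j\}_{j\ge 2}$ still lies in a totally weakly mixing group, which is automatic since $G$ is.
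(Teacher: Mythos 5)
Your proof is correct, but it is organized rather differently from the paper's. The paper obtains Theorem 2.4 as the trivial-factor case of Proposition 4.1, whose proof is a double induction on two intertwined assertions: $A_m$ (convergence of conditional expectations relative to a factor) and $B_m$ (the $L^2$ convergence itself), where $B_m$ for the doubled system $\tilde X$ yields $A_m$ for $X$, and $A_{m-1}$ yields $B_m$ via an $H$-block averaging argument that is van der Corput in disguise. You instead run a single induction on $\ell$ that closes directly on the statement of the theorem: after the van der Corput reduction, the correlation $\langle u_{n+h}^{(N)},u_n^{(N)}\rangle$ becomes, upon composing with $T_1^{-n}\hat T_1^{-N}$, an average of exactly the same form with $\ell-1$ transformations $S_j=T_jT_1^{-1}$ and $N$-independent functions $F_j^h$, so the inductive hypothesis applies on $X$ itself and no passage to a product system or to conditional expectations is needed. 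The shared core is identical --- quotienting by the first transformation, the resulting $T_jT_1^{-1}$ being distinct non-identity elements of the totally weakly mixing $G$, and the Ces\`aro decay in $h$ of $\int f_1\,T_1^hf_1\,d\mu$ when $\int f_1\,d\mu=0$ --- but your version is shorter and self-contained for this particular theorem, and your two flagged checks ($S_j\ne\mathrm{id}$ and distinct; validity of the array form of van der Corput, which is the paper's Lemma 6.1, applicable here since for each fixed $h$ the limit in $N$ of the correlation averages exists) are exactly the right ones. What the paper's heavier machinery buys is the relative statement (Proposition 4.1 for weak mixing extensions of a nontrivial factor), which is indispensable for Theorem 2.5 and is not reached by your absolute induction.
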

    
   Considering weak mixing and primitive extensions we will obtain the
   following generalization of Theorem \ref{thm2.2}.
    
    \begin{theorem}\label{thm2.5} Let $T_j,\,\hat T_j\in G,\, j=1,...,\ell,$
    where $T_1,...,T_\ell$ are distinct and different from the identity id
    of $G$ while $\hat T_1,...,\hat T_\ell$ are any transformations from $G$.
    Then for any $A\in\cB$ with $\mu(A)>0$ there exists an infinite subset of 
    positive integers $\cN_A\subset\bbN$ with uniformly bounded gaps such that
    \begin{equation}\label{2.8}
    \liminf_{N\to\infty,\, N\in\cN_A}\frac 1N\sum_{n=1}^N\mu\big(\bigcap_{j=0}^
    \ell(T_j^n\hat T_j^N)^{-1}A\big)>0
    \end{equation}
    where we set $T_0=\hat T_0=\mbox{id}$.
    \end{theorem}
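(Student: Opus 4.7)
The plan is to follow Furstenberg's structure-theorem strategy used for Theorem \ref{thm2.2}, adapted to the multidimensional setting of a finitely generated abelian group $G$. Rewrite the integrand of (\ref{2.8}) as
$$
S_N=\int\frac 1N\sum_{n=1}^N\prod_{j=0}^\ell\bbI_A(T_j^n\hat T_j^N x)\,d\mu(x).
$$
Along a Furstenberg--Katznelson tower of $G$-invariant $\sig$-algebras starting from the trivial one and ending at $\cB$, in which each successor step is either a weak mixing or a primitive (compact) extension, I prove by transfinite induction that the following property is preserved: there exist $\ve>0$ and an infinite syndetic set $\cN_A\subset\bbN$ with $\liminf_{N\to\infty,\,N\in\cN_A}S_N\ge\ve$.

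The base case (trivial factor) gives the constant $\mu(A)^{\ell+1}>0$, and limit-ordinal steps are treated by the standard density/continuity argument of \cite{Fu1,Fu2}. For a weak mixing extension $(X,\cB,\mu,G)\to(Y,\cD,\nu,G)$ I would extend the proof of Theorem \ref{thm2.4} conditionally over $\cD$ (replacing ``weakly mixing $T$'' by ``weakly mixing extension''), so that for each $N$ the $n$-average of $\prod_j\bbI_A\circ T_j^n\hat T_j^N$ differs from that of $\prod_j\bbE(\bbI_A|\cD)\circ T_j^n\hat T_j^N$ by a term that tends to zero in $L^2$ as $N\to\infty$. Since the latter coincides with the factor quantity pulled back to $X$, the factor liminf is inherited on essentially the same syndetic set.

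The primitive/compact extension step carries the main new content. Realising the extension fibrewise as a skew product with compact fibre-group action of $G$, the closure of $\{(\hat T_1^N,\ldots,\hat T_\ell^N):N\in\bbZ\}$ in the product fibre group is a compact abelian group. For every $\del>0$ the set $\cM_\del=\{N\in\bbN:\,\hat T_j^N$ is within $\del$ of the identity in the fibre for each $j=1,\ldots,\ell\}$ is therefore a Bohr set in $\bbZ$, hence syndetic. Arranging the inductive set $\cN_A^Y$ coming from the factor to be of Bohr type (a property preserved along the tower), the intersection $\cM_\del\cap\cN_A^Y$ is again Bohr, hence syndetic, and on it the action of $\hat T_j^N$ in the fibre direction is uniformly close to the identity. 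A standard continuity-plus-Furstenberg-multiple-recurrence argument for the $T_j^n$ on the fibre then transfers the positive factor lower bound to the extension.

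The main obstacle will be bookkeeping the syndeticity of $\cN_A$ through the (possibly transfinite) induction: at each step the set of good $N$'s shrinks, either by intersection with a new Bohr set $\cM_\del$ coming from a compact extension or by discarding a density-zero exceptional set coming from a new weak mixing extension, and one must ensure the result remains syndetic. Keeping $\cN_A$ Bohr-like at every stage, so that further intersections with Bohr sets stay Bohr and density-zero removals do not destroy the bounded-gaps property, is the cleanest way to handle this. Once this bookkeeping is in place, the Furstenberg--Katznelson tower exhausts $\cB$ at a countable ordinal, yielding (\ref{2.8}) on the resulting $\cN_A$.
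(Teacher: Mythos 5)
Your overall skeleton --- Furstenberg--Katznelson tower of $G$-invariant factors, transfinite induction, separate treatment of weak mixing and primitive/compact extensions --- is the same as the paper's, and your weak mixing step is essentially its Proposition \ref{prop4.1}: the relative $L^2$ convergence there holds as $N\to\infty$ over \emph{all} $N$, so the factor's liminf is inherited on the same syndetic set and no exceptional $N$'s ever need to be discarded at that stage.

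The genuine gap is in the compact-extension step, which you correctly identify as carrying the main content. You treat the fibre action of each $\hat T_j$ as a fixed element of a compact abelian group, so that the orbit closure of $\{(\hat T_1^N,\dots,\hat T_\ell^N)\}$ is a compact group and $\cM_\del$ is a Bohr set. But in a general compact (isometric) extension the action of $\hat T_j$ on the fibre over $y$ is given by a $y$-dependent cocycle $\sig_j(y)$, and $\hat T_j^N$ acts over $y$ by the product $\sig_j(S_j^{N-1}y)\cdots\sig_j(y)$. There is no single group element whose powers you are tracking, so $\cM_\del$ is not well defined; and the set of $N$ for which the cocycle product is close to the identity on a fixed positive-measure set of $y$ need not be syndetic. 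This is precisely the difficulty the paper's Proposition \ref{prop4.5} is built to overcome: it uses Lemma \ref{lem4.3} (finitely many $L^2(\mu_y)$-approximants $g_1,\dots,g_K$ to $\{R\bbI_A\}_{R\in G_c}$, uniformly in $y$) together with the multidimensional van der Waerden theorem (Lemma \ref{lem4.4}(ii)) applied to the colouring $k_{q,Q}(y,\cdot,\cdot)$ of $G\times G$, which produces for each $q\le Q$ a shift $T'$ from a fixed finite set $\Psi$ and a multiple $m\le M$ along which the approximants agree; the syndetic set $\cN_A$ then arises as $\{m(q)Q\}$ with gaps bounded by $M$ times the gaps of the factor's set. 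No Bohr sets appear, and replacing the colouring argument by your fibre-group recurrence does not go through. A secondary flaw: your bookkeeping device of ``density-zero removals do not destroy the bounded-gaps property'' of Bohr sets is false --- a density-zero set of integers can contain arbitrarily long intervals, so deleting it from a syndetic set can leave unbounded gaps --- although this particular issue is avoidable because, as noted above, the weak mixing step never requires deleting $N$'s in the first place.
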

    
    Clearly, if we set $T_j=T^{p_j}$ and $\hat T_j=T^{q_j}$ then we arrive back 
    at the setup of Theorem \ref{thm2.2}. For $\hat T_j,\, j=1,2,...,\ell,$ 
    equal the identity (\ref{2.9}) was proved in \cite{FK} with $\cN_A=\bbN$
    but our proof will follow more closely Chapter 7 of \cite{Fu2}.
    Similarly to the one transformation 
    case Theorem \ref{thm2.5} yields an extension of a multidimensional
    version of the Szemer\' edi theorem. Recall, the notion of the upper 
    (Banach) density of a set $\La\subset\bbZ^d$. For any two vectors
    $\bar a=(a_1,...,a_d),\, \bar b=(b_1,...,b_d)\in\bbZ^d$ such that 
    $a_i<b_i,\, i=1,...,d,$ denote by $B(\bar a,\bar b)$ the parallelepiped
     $\prod_{i=1}^d [a_i,b_i]$. A set $\La\subset\bbZ^d$ is said to have 
     positive upper (Banach) density if there exists a sequence of 
     parallelepipeds $B(\bar a(n),\bar b(n))$ with $\bar a(n)=(a_1(n),
     ...,a_d(n)),\, \bar b(n)=(b_1(n), ...,b_d(n))$ satisfying 
     $\lim_{n\to\infty}\min_{1\leq i\leq d}(b_i(n)-a_i(n))=\infty$ and
     such that
    \begin{equation}\label{2.9}
    \lim_{n\to\infty}\frac {|\La\cap B(\bar a(n),\bar b(n))|}{\prod_{1\leq i
    \leq d}(b_i(n)-a_i(n))}=d>0
    \end{equation}
    where, again, $|\Gam|$ denotes the number of points in a set $\Gam$.
    
    Since the group in Theorem \ref{thm2.5} is isomorphic to $\bbZ^d$ we can 
    identify the actions of $T_j$ and $\hat T_j$ with additions of some
    vectors $z_i\in\bbZ^d$ and $\hat z_i\in\bbZ^d$. For any ordered finite
    set $\Gam=\{ z_1,...,z_\ell\},\, z_i\in\bbZ^d$, $n\in\bbZ$ and $a\in\bbZ^d$
    we set $n\Gam=\{ nz_1,...,nz_\ell\}$ and $a+\Gam=\{ a+z_1,
    ...,a+z_\ell\}$. Next, if $\Gam=\{ z_1,...,z_\ell\}$ and $\hat\Gam=
    \{\hat z_1,...,\hat z_\ell\},\, z_i,\hat z_i\in\bbZ^d$ are two ordered
    finite sets then we write $\Gam+\hat\Gam=\{ z_1+\hat z_1,...,z_\ell+
    \hat z_\ell\}$. Now Theorem \ref{thm2.5} yields the following extension
    of the multidimensional Szemer\' edi theorem.
    
    \begin{corollary}\label{cor2.6} Let $\La$ be a subset of $\bbZ^d$ with a
    positive upper (Banach) density and let $\Gam=\{ z_1,...,z_\ell\},\,
    \hat\Gam=\{\hat z_1,...,\hat z_\ell\}$ be two ordered sets of vectors
    from $\bbZ^d$ such that $z_1,z_2,...,z_\ell$ are all distinct and non
    zero. Then there exist $\ve>0$ and an infinite set of positive
    integers $\cN_\La$ with uniformly bounded gaps such that for any 
    $N\in\cN_\La$ the interval $[0,N]$ contains not less than $\ve N$
    integers $n$ such that for some $a_n\in\La$,
    \begin{equation}\label{2.10}
    a_n+n\Gam+N\hat\Gam\subset\La.
    \end{equation}
    \end{corollary}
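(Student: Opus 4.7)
The plan is to apply the Furstenberg correspondence principle in the same spirit as the derivation of Corollary \ref{cor2.3} from Theorem \ref{thm2.2}, but now in the multidimensional setting, and then invoke Theorem \ref{thm2.5}.

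First, equip $X=\{0,1\}^{\bbZ^d}$ with the $\bbZ^d$--shift action $(S^z\om)_i=\om_{i+z}$, set $\bar\om=\bbI_\La$, and form the orbit closure $X_\La=\overline{\{S^z\bar\om:\,z\in\bbZ^d\}}$. Let $B(\bar a(n),\bar b(n))$ be parallelepipeds realizing (\ref{2.9}), put $\mu_n=|B_n|^{-1}\sum_{b\in B_n}\del_{S^b\bar\om}$, and let $\mu$ be any weak$^*$ limit point of $\{\mu_n\}$. Standard arguments show that $\mu$ is $\bbZ^d$--invariant, and the clopen cylinder $A=\{\om:\om_0=1\}$ satisfies $\mu(A)=d>0$.

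Now apply Theorem \ref{thm2.5} with $G=\bbZ^d$ (which is free finitely generated abelian) acting by shifts, taking $T_j=S^{z_j}$ and $\hat T_j=S^{\hat z_j}$. The hypotheses are met: the $S^{z_j}$ are distinct (since the $z_j$ are distinct) and different from the identity (since $z_j\ne 0$); no further conditions are imposed on the $\hat T_j$, so coincidences or vanishing among the $\hat z_j$ are permitted. Hence there exist a syndetic $\cN_A\subset\bbN$ and $\alpha>0$ with
\[
\liminf_{N\to\infty,\,N\in\cN_A}\frac 1N\sum_{n=1}^N\mu\Big(\bigcap_{j=0}^\ell S^{-(nz_j+N\hat z_j)}A\Big)\ge 2\alpha.
\]
Since each summand lies in $[0,1]$, a pigeonhole argument gives, for all sufficiently large $N\in\cN_A$, at least $\alpha N$ integers $n\in[1,N]$ for which $\mu\big(\bigcap_{j=0}^\ell S^{-(nz_j+N\hat z_j)}A\big)\ge\alpha$.

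For any such $(n,N)$ the intersection is a clopen cylinder, so by the weak$^*$ convergence $\mu_{n_k}\to\mu$,
\[
\alpha\le\mu\Big(\bigcap_{j=0}^\ell S^{-(nz_j+N\hat z_j)}A\Big)=\lim_{k}\frac{\big|\{b\in B_{n_k}:\,b+nz_j+N\hat z_j\in\La\,\,\forall\, j=0,\ldots,\ell\}\big|}{|B_{n_k}|},
\]
so this set is nonempty for large $k$, furnishing $a_n:=b\in\La$ with $a_n+n\Gam+N\hat\Gam\subset\La$. Setting $\cN_\La:=\cN_A$ (possibly discarding finitely many initial elements, which preserves the bounded-gap property) and $\ve:=\alpha$ completes the proof. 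The only conceptual point requiring care is the choice of the ambient group for Theorem \ref{thm2.5}; the natural choice $G=\bbZ^d$ works directly, and everything else is a routine transfer between the combinatorial and ergodic sides already used in the one-dimensional version.
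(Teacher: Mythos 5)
Your proof is correct and follows essentially the same route as the paper, which derives Corollary \ref{cor2.6} from Theorem \ref{thm2.5} via exactly this multidimensional Furstenberg correspondence (orbit closure of $\bbI_\La$ in $\{0,1\}^{\bbZ^d}$, weak$^*$ limit of the averaging measures over the parallelepipeds from (\ref{2.9}), and the shifts $S^{z_j}$, $S^{\hat z_j}$ in the roles of $T_j$, $\hat T_j$). The paper only sketches this transfer, and your write-up correctly supplies the remaining details (the pigeonhole step yielding $\ve N$ values of $n$ and the clopen-cylinder argument producing $a_n$).
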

   Corollary \ref{cor2.6} follows from Theorem \ref{thm2.5} similarly to
   the one transformation case. Namely, we consider the action of $\bbZ^d$
   on $\{ 0,1\}^{\bbZ^d}=\{\om=(\om_v),\,\om_v\in\{ 0,1\},\, v\in\bbZ^d\}$
   by $(z\om)_v=\om_{v+z}$ for any $z,v\in\bbZ^d$. Again, we take $X$ to
   be the closure in $\{ 0,1\}^{\bbZ^d}$ of the orbit $\bbZ^d\bar\om$ of the
   special sequence $\bar\om=(\bar\om_v,\,\bar\om_v=1$ if and only if 
   $v\in\La)$ and an $\bbZ^d$-invariant measure $\mu$ comes as a weak limit as 
   $n\to\infty$ of the measures $\prod_{1\leq i\leq d}(b_i(n)-a_i(n))^{-1}
   \sum_{z\in B(\bar a(n),\bar b(n))}\del_{z\bar\om}$ where 
   $B(\bar a(n),\bar b(n)),\, n=1,2,...,$ are the same as in (\ref{2.9}). 
    
   The proofs of the above results proceed similarly to \cite{FKO} and 
   \cite{Fu2},
   and so we will be trying to make a compromise between keeping the paper
   relatively self-contained and still avoiding too many repetitions of 
   arguments from \cite{FKO} and \cite{Fu2}. Though, of course, Theorem
   \ref{thm2.2} is a particular case of Theorem \ref{thm2.5}, in order to
   facilitate the reading, we will consider first the one transformation
   case and then pass to the case of commuting transformations. 
   
   As we mentioned it in Introduction it is possible to give 
   a shorter argument yielding Theorems \ref{thm2.2} and \ref{thm2.5}, 
    which will be presented in Section \ref{sec5}. This argument
   relies on quite general results from the
   recent paper \cite{Au}. In fact, this argument together with \cite{BLL} 
   yields Theorem \ref{thm2.1} with linear terms $p_in+q_iN$ replaced by
   arbitrary polynomials $p_i(n,N),\, i=1,...,\ell,$ taking on integer values
    for integer pairs $n,N$ and such that for any integer $k$ there exist
    $n$ and $N$ with $p_i(n,N)$ divisible by $k$ for each $i=1,...,\ell$.
    The results in \cite{BLL} and \cite{Au} rely on advanced machinery 
    developed with the purpose to derive convergence of nonconventional
    averages in various situations. The direct proof presented here, which
    proceeds along the lines of the original proof in \cite{FKO} and \cite{Fu2},
    still seems to be useful, in particular, for focusing attention on limiting
     behavior of nonconventional arrays, which is a somewhat different point of
   view in comparison to other research on multiple recurrence problems and
    since Theorems \ref{thm2.2} and \ref{thm2.5}  do not follow from \cite{BLL}
   and \cite{Au}.
   
   \begin{remark}\label{rem2.7}
   As we have seen, the limit in Theorem \ref{thm2.1} does not exist, in general, 
 without the weak mixing assumption but it is plausible that the limit may
 exist over syndetic subsequences of $N$'s. It would be interesting also to 
 obtain some uniform versions of Theorems \ref{thm2.2} and \ref{thm2.5} 
 in the spirit of \cite{BHMP}. It would be also natural to find most general
 conditions, which ensure almost everywhere convergence of averages of
 nonconventional arrays though this question is not completely settled even
 for standard nonconventional averages (i.e. without dependence of summands
 on $N$). Finally, we observe that it may be interesting
 to obtain a result of the type
 of Corollary \ref{cor2.3} for the set of primes in place of a set of positive
 upper density extending to this situation the main result of \cite{GT}. 
 In this case relevant sets of $N$'s will probably have gaps containing only 
 bounded number of primes.
\end{remark}

Next, we consider averages of nonconventional arrays (\ref{1.3}) with higher
degree polynomials $P_j(n,N),\, j=1,...,\ell$. When we can separate 
dependencies on $n$ and $N$ the applying the "PET-induction" from \cite{BL}
for polynomials in $n$ and, essentially, treating $N$ as a constant there we
 will obtain in Section \ref{sec6} the following result.
\begin{theorem}\label{thm2.8} Let $T_1,...,T_k$ be different from identity
transformations belonging to a totally weak mixing finitely generated free
abelian group $G$ acting on $(X,\cB,\mu)$ by measure preserving transformations
and $\hat T_1,...,\hat T_k$ be invertible $\mu$-preserving transformations
of $X$ which commute with each other and with $T_1,...,T_k$. Furthermore,
let $P_{ij}(n),\, i=1,...,\ell,\, j=1,...,k,$ be polynomials taking on integer
values on integers and suppose that the expressions
\[
\vf_i(n)=T_1^{P_{i1}(n)}\cdots T_k^{P_{ik}(n)},\, i=1,...,\ell,
\]
and the expressions
\[
\vf_i(n)\vf_j^{-1}(n)=T_1^{P_{i1}(n)-P_{j1}(n)}T_2^{P_{i2}(n)-P_{j2}}\cdots
T_k^{P_{ik}(n)-P_{jk}(n)},\, i,j=1,...,\ell,\, i\ne j,
\]
depend nontrivially on $n$ (i.e. that they are nonconstant maps from $\bbZ$
to $G$). In addition, let $Q_{ij}(N),\, i=1,...,\ell,\, j=1,...,k,$ be 
arbitrary functions of $N$ taking on integer values on integers. Then, for
any bounded measurable functions $f_i,\, i=1,...,\ell,$
\begin{eqnarray}\label{2.11}
&\lim_{N\to\infty}\frac 1N\sum_{n=1}^{N}\prod_{i=1}^\ell T_1^{P_{i1}(n)}\cdots
T_k^{P_{ik}(n)}\hat T_1^{Q_{iq}(N)}\cdots\hat T_k^{Q_{ik}(N)}f_i\\
&=\prod_{i=1}^\ell\int f_id\mu\,\,\quad\mbox{in}\,\, L^2(X,\mu).\nonumber
\end{eqnarray}
\end{theorem}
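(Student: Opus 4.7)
My approach is to absorb the $N$-dependent factors into new test functions and then apply an $L^\infty$-uniform version of the Bergelson--Leibman PET ergodic theorem for the totally weak mixing group $G$. Setting $\vf_i(n)=T_1^{P_{i1}(n)}\cdots T_k^{P_{ik}(n)}$ and
\[
g_i^N:=\hat T_1^{Q_{i1}(N)}\cdots\hat T_k^{Q_{ik}(N)}f_i,
\]
the commutation of each $\hat T_j$ with every $T_l$ lets the average in \eqref{2.11} be rewritten as
\[
A_N:=\frac{1}{N}\sum_{n=1}^{N}\prod_{i=1}^\ell\vf_i(n)\,g_i^N.
\]
Since each $\hat T_j$ preserves $\mu$, we have $\|g_i^N\|_\infty\le\|f_i\|_\infty$ and $\int g_i^N\,d\mu=\int f_i\,d\mu$ for every $N$, so the desired limit depends on the $f_i$'s only through $\int f_i\,d\mu$ and the uniform bound $\|f_i\|_\infty$.

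The heart of the plan is to establish the following uniform statement: for every $M>0$ and $\ve>0$ there exists $N_0=N_0(M,\ve,\{\vf_i\})$ such that for all $N\ge N_0$ and every tuple $h_1,\dots,h_\ell$ of measurable functions with $\|h_i\|_\infty\le M$,
\[
\Big\|\frac{1}{N}\sum_{n=1}^N\prod_{i=1}^\ell\vf_i(n)h_i-\prod_{i=1}^\ell\int h_i\,d\mu\Big\|_{L^2}<\ve.
\]
Applying this with $h_i=g_i^N$ and $M=\max_i\|f_i\|_\infty$ then yields the theorem at once, because $\int g_i^N\,d\mu=\int f_i\,d\mu$ for every $N$.

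To prove the uniform statement I would run the PET induction of \cite{BL}, adapted for a totally weak mixing, finitely generated free abelian group $G$. One inducts on a complexity invariant of the family $(\vf_1,\dots,\vf_\ell)$ of polynomial maps $\bbZ\to G$. The base case $\ell=1$ with $\vf_1$ nonconstant follows from a polynomial van der Corput argument combined with total weak mixing of $G$, giving $\frac{1}{N}\sum_n\vf_1(n)h\to\int h\,d\mu$ in $L^2$ with error at most $C\|h\|_\infty\delta_N$, where $\delta_N\to 0$ depends only on the polynomials involved. For the inductive step one applies van der Corput's inequality to $\vec a_n=\prod_i\vf_i(n)h_i$; after a measure-preserving substitution by $\vf_1(n)^{-1}$, the inner products $\langle\vec a_n,\vec a_{n+h}\rangle$ become averages of products involving the transformations $\vf_i(n)\vf_1(n)^{-1}$ and $\vf_i(n+h)\vf_1(n)^{-1}$ acting on functions still bounded by $M$. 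The hypotheses that every $\vf_i$ and every $\vf_i\vf_j^{-1}$ is nonconstant guarantee that the resulting polynomial family has strictly smaller PET complexity, so the inductive hypothesis applies uniformly to it.

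The main obstacle is extracting an $L^\infty$-uniform rate rather than the qualitative convergence one gets by fixing the $h_i$'s at the outset. This requires checking that every application of van der Corput, Cauchy--Schwarz, and the inductive hypothesis produces bounds depending only on the $L^\infty$ norms of the test functions. Because van der Corput is quadratic in the sequence and Cauchy--Schwarz on the probability space $(X,\mu)$ respects $L^\infty$-bounds, this uniformity propagates through the complexity reduction; the key structural fact that keeps the induction running is that total weak mixing of $G$ is inherited by the differences $\vf_i\vf_j^{-1}$ and so remains available at every stage of the reduction as well as at the base case.
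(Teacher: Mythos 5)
Your reduction breaks at its central step: the ``uniform statement'' you need is false, already in the simplest instance $\ell=1$, $k=1$, $\vf_1(n)=T_1^n$ with $T_1$ weakly mixing. You are asking that
$\sup_{\|h\|_\infty\le M}\big\|\frac1N\sum_{n=1}^NT_1^nh-\int h\,d\mu\big\|_{L^2}\to0$,
i.e.\ that the mean ergodic theorem hold uniformly over the $L^\infty$ ball. It does not, for any aperiodic (in particular any weakly mixing) $T_1$ on a nonatomic space: by Rokhlin's lemma take a tower $B,T_1B,\dots,T_1^{H-1}B$ of height $H\gg N$ covering all but measure $\delta\ll H^{-1}$ of $X$ and set $h=\sum_{j=0}^{H-1}e^{2\pi ij/H}\bbI_{T_1^jB}$. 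Then $\|h\|_\infty\le1$, $\int h\,d\mu\approx0$, and $h$ is an approximate eigenfunction with eigenvalue $e^{2\pi i/H}$ so close to $1$ that $\frac1N\sum_{n=1}^NT_1^nh$ stays within $o(1)$ of $h$ in $L^2$, whence the averaged quantity has $L^2$ norm bounded below by, say, $1/2$. Since the rate of equidistribution (equivalently, the decay of $\frac1H\sum_h|\langle h,U^hh\rangle|$ in the van der Corput step) is likewise not uniform over bounded functions, no amount of bookkeeping through Cauchy--Schwarz will produce the $C\|h\|_\infty\delta_N$ error bound you posit in the base case, and the same obstruction recurs at every level of the induction. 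Your correct observations that $\|g_i^N\|_\infty\le\|f_i\|_\infty$ and $\int g_i^N\,d\mu=\int f_i\,d\mu$ are therefore not enough to transfer a fixed-function PET theorem to the $N$-dependent functions $g_i^N$.

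What saves the theorem is that the $g_i^N$ are not arbitrary points of the $L^\infty$ ball but images of fixed functions under measure-preserving maps, and the paper exploits this by never converting the $\hat T_j^{Q_{ij}(N)}$ into test functions at all: they are carried through the whole argument as the $\psi_i(N)$-part of ``$P$-polynomial expressions'' $\Phi_i(n,N)=\vf_i(n)\psi_i(N)$. In the base case $\ell=1$ the single $\psi_1(N)$ factors out of the entire sum $\frac1N\sum_n\vf_1(n)\psi_1(N)f=\psi_1(N)\big(\frac1N\sum_n\vf_1(n)f\big)$ and is killed by unitarity of the Koopman operator --- convergence for the fixed function $f$ suffices, with no uniformity needed. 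In the inductive step the van der Corput inner products are rewritten (after dividing by the expression of minimal weight) as averages of a new system of $P$-polynomial expressions applied to functions $\tilde f_i\in\{f_l,\ \vf_l(h)f_l,\ f_l\cdot\vf_l(h)f_l\}$ which depend on $h$ but not on $N$, so the induction hypothesis is always invoked for fixed functions. If you want to repair your argument you must either restrict your uniformity claim to the orbit $\{\psi_i(N)f_i\}_{N}$ and prove it there --- which for $\ell\ge2$ essentially forces you back into the paper's formulation --- or build the $N$-dependent transformations into the inductive statement from the start, as the paper does.
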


If all $T_i$'s and $\hat T_i$'s coincide with one transformation $T$ then
(\ref{2.11}) becomes
\begin{equation}\label{2.12}
\lim_{N\to\infty}\frac 1N\sum_{n=1}^N\prod_{j=1}^\ell T^{P_j(n,N)}f_j=
\prod_{j=1}^\ell\int f_jd\mu\,\,\quad\mbox{in}\,\, L^2(X,\mu)
\end{equation}
with $P_j(n,N)$'s taking the form $P_j(n,N)=P_j(n)+Q_j(N)$ where $P_j(n)$'s 
are nonconstant essentially distinct polynomials of $n$ and 
$Q_j(N)$'s are function of $N$, both taking on integer values on integers.
It turns out that for general polynomials of $n$ and $N$ weak mixing may not
 be enough for the $L^2$ convergence in (\ref{1.3}). In Section \ref{sec6} 
we will show employing a version of a spectral argument suggested to us
 by Benji Weiss that already the averages
\begin{equation}\label{2.13}
\frac 1N\sum_{n=1}^NT^{nN}f
\end{equation}
do not converge in $L^2$ as $N\to\infty$, in general, if $T$ is only weak 
mixing. Still, strong mixing of $T$ ensures convergence in $L^2$ for
this example. More generally, we will prove the following result where
we rely on the notion of strong $m$-mixing, which means that 
\begin{eqnarray*}
&\lim_{|k_i-k_j|\to\infty,\,\forall i\ne j}\mu(\bigcap_{i=1}^mT^{-k_i}\Gam_i)\\
&=\lim_{l_1,...,l_{m-1}\to\infty}\mu(\Gam_1\cap T^{-l_1}\Gam_2\cap...\cap 
T^{-(l_1+\cdots +l_{m-1})}\Gam_m)=\prod_{i=1}^m\mu(\Gam_i)
\end{eqnarray*}
for any measurable sets $\Gam_1,...,\Gam_m$.

\begin{theorem}\label{thm2.9} Let $P_j(n,N),\, j=1,...,\ell,$ 
be nonconstant essentially distinct polynomials of $n$ and $N$ 
(i.e. $P_i(n,N)-P_j(n,N),
\, i\ne j$ is not a constant identically) taking on integer values on 
integers and nontrivially depending on $n$ (i.e. $P_i(n,N)$ is not just a
polynomial of $N$). If $T$ is a strongly $2\ell$-mixing invertible 
transformation of $(X,\cB,\mu)$ then (\ref{2.12}) 
 holds true for any bounded measurable functions $f_j,\, j=1,...,\ell$.
\end{theorem}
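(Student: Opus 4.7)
The plan is to establish the $L^2$ convergence of
\[
\Phi(N) := \frac{1}{N}\sum_{n=1}^{N}\prod_{j=1}^{\ell} T^{P_j(n,N)}f_j
\]
to $c := \prod_{j=1}^{\ell} \int f_j\, d\mu$ by computing the first two moments and invoking the identity
\[
\|\Phi(N) - c\|_{2}^{2} = \int |\Phi(N)|^2 d\mu - 2\Re\bigl(\bar c\int \Phi(N) d\mu\bigr) + |c|^2.
\]
It is therefore enough to show that $\int \Phi(N)\, d\mu \to c$ and $\int |\Phi(N)|^{2}\, d\mu \to |c|^{2}$. A preliminary observation is that the strong $m$-mixing hypothesis, originally phrased for indicator functions, extends to all bounded measurable functions by simple-function approximation: for each $\epsilon > 0$ there exists $M_0 = M_0(\epsilon)$ such that whenever the integers $k_1,\dots,k_m$ satisfy $|k_i - k_j| > M_0$ for all $i\neq j$, one has $\bigl|\int \prod_{i=1}^m T^{k_i} h_i\, d\mu - \prod_{i=1}^m\int h_i\, d\mu\bigr| < \epsilon$ for $h_i$ in the fixed finite family $\{f_1,\dots,f_\ell,\bar f_1,\dots,\bar f_\ell\}$.

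The heart of the argument is then a counting lemma. Writing
\[
\int |\Phi(N)|^2 d\mu = \frac{1}{N^2}\sum_{n,m=1}^{N} J(n,m,N),\quad J(n,m,N) := \int \prod_{j=1}^\ell T^{P_j(n,N)}f_j \cdot \overline{\prod_{j=1}^\ell T^{P_j(m,N)}f_j}\, d\mu,
\]
I would call $(n,m)$ \emph{good} if all pairwise differences among the $2\ell$ exponents $\{P_j(n,N), P_j(m,N)\}_{j=1}^\ell$ exceed $M_0$ in absolute value. The goal is to show that the number of bad pairs is $O(M_0 N)$. For same-variable differences $P_i(n,N) - P_j(n,N)$ with $i\neq j$, essential distinctness makes this a nonzero polynomial in $(n,N)$; writing it as $\sum_k a_k(N) n^k$, either some $a_k$ with $k \geq 1$ is a nonzero polynomial in $N$ (so that the $n$-degree stays positive for $N$ outside a finite set, giving at most $O(M_0)$ bad $n$'s) or the polynomial reduces to a nonconstant $a_0(N)$ whose modulus exceeds $M_0$ for all large $N$. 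For cross differences $P_i(n,N) - P_j(m,N)$ the nontrivial $n$-dependence of $P_i$ ensures that, for $N$ outside a finite exceptional set and each fixed $m$, this is a nonconstant polynomial in $n$, so at most $O(M_0)$ values of $n$ are bad; summing over $m$ gives $O(M_0 N)$ bad pairs overall.

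For good pairs, the extended $2\ell$-mixing yields $\bigl|J(n,m,N) - |c|^2\bigr| < \epsilon$, while for bad pairs $|J(n,m,N)|$ is bounded by $C := (\prod_j \|f_j\|_\infty)^2$. Hence
\[
\Bigl|\int|\Phi(N)|^2 d\mu - |c|^2\Bigr| \leq \epsilon + O(M_0/N),
\]
and letting $N\to\infty$ then $\epsilon\downarrow 0$ gives $\int|\Phi(N)|^2 d\mu \to |c|^2$. The identical reasoning with only $\ell$ exponents on the single index $n$, using strong $\ell$-mixing (which follows from strong $2\ell$-mixing by inserting auxiliary sets equal to $X$), yields $\int \Phi(N) d\mu \to c$, which combined with the second-moment statement gives the desired $L^2$ convergence.

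The main obstacle is the counting lemma, particularly handling the cross-differences $P_i(n,N) - P_j(m,N)$: one must verify that the nontrivial $n$-dependence of each $P_i$ really forces $P_i(\cdot, N)$ to have positive $n$-degree for all sufficiently large $N$, absorbing the finitely many exceptional $N$'s where top-degree-in-$n$ coefficients (themselves polynomials in $N$) vanish into a lower-order error term negligible in the density count.
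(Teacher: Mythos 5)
Your proof is correct, but it takes a genuinely different route from the paper's. The paper proves Theorem \ref{thm2.9} by reducing to the case $\int f_{j_0}\,d\mu=0$, applying the van der Corput lemma (Lemma \ref{lem6.1}), computing $\lim_{N}\frac 1N\sum_n\langle x_{n,N},x_{n+h,N}\rangle$ for fixed $h$ via strong $2\ell$-mixing and the counting estimate of Lemma \ref{lem6.3}, and only then letting $h\to\infty$ and using $2$-mixing to kill the surviving correlations $\int f_jT^{p_jh}f_j\,d\mu$ coming from the polynomials linear in $n$. You instead compute the first and second moments of $\Phi(N)$ directly over the $(n,m)$-grid and show the bad set has cardinality $O(M_0N)=o(N^2)$; this works precisely because under strong mixing the near-diagonal pairs $|p_j(n-m)|\le M_0$, which are the obstruction forcing the paper's two-step limit, already have vanishing density in the square. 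Your route buys two simplifications: it dispenses with van der Corput entirely, and it replaces the paper's Lemma \ref{lem6.2} (essential distinctness of the shifted family $P_j(n+h,N)$, which requires $h$ large and a case analysis on the $n$-degree) by the much weaker observation that $P_i(\cdot,N)-P_j(m,N)$ is nonconstant in $n$ for $N$ outside a finite set, which is immediate from the nontrivial $n$-dependence. What the paper's architecture buys in exchange is structural parallelism with the weak-mixing PET argument of Theorem \ref{thm2.8}, where van der Corput is unavoidable because the correlations $\int f\,T^hf\,d\mu$ vanish only in density; the van der Corput formulation also isolates exactly where strong (as opposed to weak) mixing enters, namely in the final $\lim_h$ step. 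Both arguments rest on the same two pillars --- the extension of strong $m$-mixing to bounded functions and the polynomial root-counting of Lemma \ref{lem6.3} --- and the one point you flag as delicate (absorbing the finitely many exceptional $N$ where leading $n$-coefficients, as polynomials in $N$, vanish) is handled correctly, since those $N$ are irrelevant to the limit $N\to\infty$.
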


 Observe that both conditions that the polynomials $P_j$ are essentially 
 distinct and nontrivially depend on $n$ are important for Theorem \ref{thm2.9}
 to hold true. As to the first condition consider
 $\frac 1N\sum_{n=1}^NT^nfT^{n+1}g=\frac 1N\sum_{n=1}^NT^n(fTg)$ which 
 by the $L^2$ ergodic theorem converges as $N\to\infty$ to $\int fTgd\mu$ which
 usually differs from the product of integrals of $f$ and $g$. As to the second 
 condition we can consider $\frac 1N\sum_{n=1}^NT^Nf=T^Nf$ which does not 
 converges at all as $N\to\infty$ unless $f$ is a constant $\mu$-almost 
 everywhere.
 It would be natural to try to show that for $\ell\geq 2$ strong mixing
 (i.e. 2-mixing) is not enough, in general, for Theorem \ref{thm2.9} to hold
 true but this is not easy since then we would have to construct an example of
 a 2-mixing but not $2\ell$-mixing transformation which is a version of the old
 open problem attributed to Rokhlin.
 
 We observe that such dynamical systems as topologically mixing subshifts 
 of finite type, Axiom A diffeomorphisms and expanding transformations
 considered with an invariant Gibbs measure constructed by a H\" older 
 continuous function (potential) are strong mixing of all orders so the
 above theorem is applicable for them. This is also true for the Gauss map
 $Tx=1/x$ mod 1, $x\in(0,1),\, T0=0$ considered with its Gauss invariant 
 measure $\mu(\Gam)=\frac 1{\ln 2}\int_\Gam\frac {dx}{1+x}$, as well as some
 other maps of the interval. Actually, mixing of all orders follows from the 
 property called in probability $\al$-mixing (or strong mixing) and the above
 dynamical systems have this property (and even stronger property called
 $\psi$-mixing with exponential speed, see, for instance, \cite{Bo}, \cite{He}
 and \cite{Bra}). 
 
 These notions are defined via two parameter families of $\sig$-algebras
 $\cF_{mn}\subset\cF$, $-\infty< m\leq n<\infty$ on a probability
 space $(X,\cF,P)$ such that $\cF_{mn}\subset\cF_{m'n'}$ if $m'\leq m\leq n
 \leq n'$. We define also $\cF_{mn}$ for $m=-\infty$ and $n<\infty$, for
 $m>-\infty$ and $n=\infty$ or for $m=-\infty$ and $n=\infty$ as minimal
 $\sig$-algebras containing $\cF_{kn}$ for all $k>-\infty$, containing
 $\cF_{ml}$ for all $l<\infty$ or containing $\cF_{kl}$ for all $-\infty<k\leq
 l<\infty$, respectively. Such family of $\sig$-algebras is called $\al$-mixing
  if
 \[
 \al(n)=\sup_m\{ |\mu(A\cap B)-\mu(A)\mu(B)|:\, A\in\cF_{-\infty,m},\, B\in
 \cF_{m+n,\infty}\}\to 0\,\,\mbox{as}\,\, n\to\infty.
 \]
 Now, we have the following result which is probably well known but for
 readers' convenience we provide details here.
 \begin{proposition}\label{prop2.10} Suppose that $\{\cF_{mn},\,-\infty\leq 
 m\leq n\leq\infty\}$ is an $\al$-mixing family of $\sig$-algebras on a 
 probability space $(X,\cF,\mu)$ with $\cF=\cF_{-\infty,\infty}$. 
 Let $T:X\to X$ be a measure $\mu$-preserving transformation such that
 $T^{-1}\cF_{m,n}\subset\cF_{m+1,n+1}$ for all $m\leq n$. Then for any
 $\Gam_1,...,\Gam_k\in\cF,\, k\geq 2$,
 \begin{equation}\label{2.14}
 \lim_{l_1,...,l_{k-1}\to\infty}\mu(\Gam_1\cap T^{-l_1}\Gam_2\cap...\cap 
 T^{-(l_1+\cdots +l_{k-1})}\Gam_k)=\prod_{i=1}^k\mu(\Gam_i).
 \end{equation}  
 \end{proposition}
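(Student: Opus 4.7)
The plan is to establish the result by induction on $k\geq 2$, after first reducing to sets lying in a finite band $\cF_{-M,M}$. The key preparatory observation is that $\bigcup_{-\infty<m\leq n<\infty}\cF_{m,n}$ is already an algebra, since $\cF_{m,n}\subset\cF_{m',n'}$ whenever $m'\leq m\leq n\leq n'$; as this algebra generates $\cF=\cF_{-\infty,\infty}$, a standard approximation theorem guarantees that for any $\Gam_i\in\cF$ and $\eta>0$ one can choose a common $M<\infty$ and sets $\tilde\Gam_i\in\cF_{-M,M}$ with $\mu(\Gam_i\triangle\tilde\Gam_i)<\eta$. Iterating the hypothesis $T^{-1}\cF_{m,n}\subset\cF_{m+1,n+1}$ then gives $T^{-L}\tilde\Gam_i\in\cF_{L-M,L+M}$ for every $L\geq 0$.

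For the base case $k=2$, once $\Gam_1,\Gam_2$ are replaced by $\tilde\Gam_1,\tilde\Gam_2\in\cF_{-M,M}$, one has $\tilde\Gam_1\in\cF_{-\infty,M}$ and $T^{-l_1}\tilde\Gam_2\in\cF_{l_1-M,\infty}$ as soon as $l_1>2M$, so the definition of $\al$-mixing applied with parameter $m=M$ and gap $n=l_1-2M$ yields
\[
\bigl|\mu(\tilde\Gam_1\cap T^{-l_1}\tilde\Gam_2)-\mu(\tilde\Gam_1)\mu(\tilde\Gam_2)\bigr|\leq\al(l_1-2M)\to 0
\]
as $l_1\to\infty$; a triangle inequality absorbing the $O(\eta)$ approximation error on each side and then letting $\eta\to 0$ completes this step.

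For the inductive step, write $L_j=l_1+\cdots+l_{j-1}$, set $A=\tilde\Gam_1\in\cF_{-\infty,M}$ and $B=\bigcap_{j=2}^k T^{-L_j}\tilde\Gam_j\in\cF_{l_1-M,L_k+M}\subset\cF_{l_1-M,\infty}$; the $\al$-mixing inequality then gives $|\mu(A\cap B)-\mu(A)\mu(B)|\leq\al(l_1-2M)$. Because $T$ preserves $\mu$, $\mu(B)=\mu\bigl(\bigcap_{j=2}^k T^{-(L_j-l_1)}\tilde\Gam_j\bigr)$, which is an expression of exactly the same form but with $k-1$ sets and gaps $l_2,\ldots,l_{k-1}$; by the inductive hypothesis this quantity converges to $\prod_{j=2}^k\mu(\tilde\Gam_j)$ as $l_2,\ldots,l_{k-1}\to\infty$. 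Combining the two estimates and undoing the $\eta$-approximation (whose total cost is at most $2k\eta$, coming both from inside the intersection and from the product of measures) yields the claim upon letting $\eta\to 0$.

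The mildly delicate point is the index bookkeeping: one must verify that the shift hypothesis places $T^{-L_j}\tilde\Gam_j$ into $\cF_{L_j-M,L_j+M}$ consistently, that $B$ really sits in $\cF_{l_1-M,\infty}$ so the separation parameter entering $\al(\,\cdot\,)$ is genuinely $l_1-2M$, and that a single $M$ may be chosen to serve all $k$ approximating sets (which it can, by enlarging the individual bands to a common value). The supremum over $m$ built into the definition of $\al(n)$ is precisely what allows this uniform treatment, and no further ingredients are required.
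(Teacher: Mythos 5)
Your proof is correct and follows essentially the same route as the paper: approximate each $\Gam_i$ by a set in a finite band $\cF_{-M,M}$ with error $\eta$ (total cost $2k\eta$), then iterate the two-set $\al$-mixing inequality by peeling off the leftmost set; the paper packages that iteration as a single displayed inequality for $k$ sets while you formalize it as an induction on $k$ using $T$-invariance of $\mu$, which is only a cosmetic difference.
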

  \begin{proof}
  First, observe that if $G_i\in\cF_{m_in_i},\, i=1,...,k$ with $m_i\leq n_i<
  m_{i+1},\, i=1,...,k-1,$ then applying the definition of the mixing 
  coefficient $\al$ subsequently we obtain that
  \begin{equation}\label{2.15}
  \big\vert\mu(\bigcap_{i=1}^kG_i)-\prod_{i=1}^k\mu(G_i)\big\vert\leq
  \sum_{i=1}^{k-1}\al (m_{i+1}-n_i).
  \end{equation}
  Next, let $\Gam_i\in\cF_{mn}$ for some $-\infty<m\leq n<\infty$ and all
  $i=1,...,k$. Since $T^{-1}\cF_{mn}\subset\cF_{m+1,n+1}$ we obtain from 
  (\ref{2.15}) that (\ref{2.14}) holds true. Now, let $\Gam_i\in\cF=
  \cF_{-\infty,\infty},\, i=1,...,k,$ be arbitrary. Then for each $\ve>0$ 
  there exist $m=m(\ve)\leq n=n(\ve)$ and $\hat\Gam_i\in\cF_{mn}$ such that
  $\mu(\Gam_i\triangle\hat\Gam_i)<\ve,\, i=1,...,k,$ where $\triangle$
  denotes the symmetric difference. Since (\ref{2.14}) holds true for
  $\hat\Gam_i$ in place of $\Gam_i\, i=1,...,k,$ we obtain that
  \[
  \limsup_{l_1,...,l_{k-1}\to\infty}\big\vert\mu(\Gam_1\cap T^{-l_1}\Gam_2\cap
  ...\cap T^{-(l_1+\cdots +l_{k-1})}\Gam_k)-\prod_{i=1}^k\mu(\Gam_i)\big\vert
  \leq 2k\ve
  \]
  and since $\ve>0$ is arbitrary (\ref{2.14}) follows.
  \end{proof}
  
  Observe, that a typical application of the above setup is in the symbolic
  setup where $X$ is a sequence space, $T$ is the left shift and the 
  $\sig$-algebras $\cF_{mn}$ are generated by the cylinder sets for which the
  sequence elements on places from $m$ to $n$ are fixed. This can be extended
  to dynamical systems having appropriate symbolic representations via, for
  instance, Markov partitions.

\section{One transformation case}\label{sec3}\setcounter{equation}{0}

In this section we will establish Theorems \ref{thm2.1}, \ref{thm2.2}
and Corollary \ref{cor2.3}.

\subsection{Factors and extensions}\label{subsec3.1}
 The strategy of our proof is the same as
 in \cite{FKO}. It is based on the notions of factors and extensions.
  Recall, that
 if $T$ is a measure preserving transformation of a probability space
 $(X,\cB,\mu)$ and $T^{-1}\cB_1\subset\cB_1\subset\cB$ then $(X,\cB_1,\mu,T)$
 is called a factor of $(X,\cB,\mu,T)$ while the latter is called an
 extension of $(X,\cB_1,\mu,T)$. The latter factor is said to be 
 nontrivial if $\cB_1$ contains sets of measure strictly between 0 and 1.
 It is often more convenient to view factors in the following equivalent
 way (see \cite{FKO} for more details). Namely, the factor $(X,\cB_1,\mu,T)$
  is identified with a system
 $(Y,\cD,\nu,S)$ such that for some measurable onto map $\pi:X\to Y$ we have
 $\pi\mu=\nu$, $\pi T=S\pi$ and $\cB=\pi^{-1}\cD$. Furthermore, $\mu$ 
 disintegrates into $\mu_y,\, y\in Y$ so that $\mu=\int\mu_yd\nu(y)$ and
 $T\mu_y=\mu_{Sy}$ $\nu$-almost everywhere (a.e.).
 
 Next, let $g\in L^2(X,\cB,\mu)$ and let $\cY=(Y,\cD,\nu,S)$ be a factor
 of $(X,\cB,\mu,T)$. Following \cite{FKO} we set
 \[
 E(g|\cY)(y)=\int gd\mu_y.
 \]
 This is essentially the conditional expectation $E(g|\cB_1)$ provided
 $(Y,\cD,\nu,S)$ is identified with $(X,\cB_1,\mu,T)$. Since $\cB_1=
 \pi^{-1}\cD$ and $Y=\pi X$ then $E(g|\cB_1)$ is constant on $\pi^{-1}y$
 for $\nu$-almost all $y$, and so this conditional expectation can be viewed
 as a function
 on $Y$. Since we refer often to \cite{FKO} we will keep the notations from
 there though they differ slightly from the way conditional expectations with 
 respect to $\sig$-algebras are written in probability. We will use also the
 following well known formulas
 \begin{equation}\label{3.1}
 E(gf|\cY)=gE(f|\cY)\,\,\mbox{if}\,\, g\,\,\mbox{is}\,\,\cB_1-\mbox{measurable
 and}\,\, SE(f|\cY)=E(Tf|\cY)
 \end{equation}
 provided $f$ and $fg$ are integrable.
 
 Fix a measure preserving system $(X,\cB,\mu,T)$ and let $\cB_1\subset\cB$ be
  a $T$-invariant $\sig$-subalgebra. If (\ref{2.3}) holds true for any 
  $A\in\cB_1,\,\ell$ and $p_j,q_j,\, j=0,1,...,\ell,$ all satisfying the
  conditions of Theorem \ref{thm2.2} then we say that the action of $T$ on
  the factor $(X,\cB_1,\mu)$ is generalized Szemer\' edi (GSZ). To make this
  shorter we will also say in this case that the action of $T$ on $\cB_1$ is
  GSZ and if $(X,\cB_1,\mu,T)$ is identified with $(Y,\cD,\nu,S)$ then this is
  equivalent to saying that the action of $S$ on $\cD$ is GSZ.
  
  Similarly to \cite{FKO} we can see that the set of factors for which $T$ is 
  GSZ contains a maximal element and that no proper factor can be maximal. The
  proof of Theorem \ref{thm2.2}
  is based on the notions of relative weak mixing and relative compact
  extensions of other factors, which will be defined below. We will show that
  if the action $T$ is GSZ for smaller factor then it is also GSZ for a larger
  factor which is either relative mixing or relative compact with respect to
  the smaller factor. Considered together with two following facts this will
  yield our result. First, similarly to \cite{FKO} we see that if $T$ is GSZ
  for a totally ordered (by inclusion) family of factors $\{\cB_\al\}$ (i.e.
  factors $(X,\cB_\al,\mu)$) then $T$ is GSZ for $\sup_\al\cB_\al$
  (i.e. for $(X,\sup_\al\cB_\al,\mu))$ where the latter is the minimal 
  $\sig$-algebra containing each $\cB_\al$. Secondly, we rely on the general 
  result from \cite{FKO} saying that if $\cX=(X,\cB,\mu,T)$ is an extension of
  $\cY=(Y,\cD,\nu,S)$, which is not relative weak mixing, then there exists an 
  intermediate factor $\cX^*$ between $\cY$ and $\cX$ so that $\cX^*$ is a
  (relative) compact extension of $\cY$.
  
  \subsection{Relative weak mixing}\label{subsec3.2}
   Let $(Z,\cE,\te)$ be a probability space,
  $X=Y\times Z$, $\mu=\nu\times\te$, $\cB=\cD\times\cE$ and $T(y,z)=(Sy,\,
  \sig(y)z)$ where $S:Y\to Y$ preserves a probability measure $\nu$,
  $\sig(y)z$ is measurable in $(y,z)$ and all 
  $\sig(y),\, y\in Y$ preserve the measure $\te$. Then $T$ is measure 
  preserving on $(X,\cB,\mu)$ and $(X,\cB,\mu,T)$ is called in \cite{FKO}
  a skew product of $(Y,\cD,\nu,S)$ with $(Z,\cD,\te)$ (while usually $T$
  inself is called a skew product transformation). Set $\tilde X=Y\times Z
  \times Z$, $\tilde\cB=\cD\times\cE\times\cE$, $\tilde\mu=\nu\times\te
  \times\te$ and $\tilde T(y,z,z')=(Sy,\sig(y)z,\sig(y)z')$. Then $\cX=
  (X,\cB,\mu,T)$ is called a relative weak mixing extension of $\cY=
  (Y,\cD,\nu,S)$ if the action of $\tilde T$ on $(\tilde X,\tilde\cB,
  \tilde\mu)$ is ergodic. 
  
  \begin{proposition}\label{prop3.1}
  Let $(X,\cB,\mu,T)$ be a relative weak mixing extension of $(Y,\cD,\nu,S)$
  and $f_j\in L^\infty(X,\cB,\mu),\, j=0,1,...,\ell$. Then for any $m=1,2,...,
  \ell$,
  \begin{equation}\label{3.2}
  \lim_{N\to\infty}\frac 1N\sum_{n=1}^N\int\big(E(\prod_{j=0}^mT^{p_jn+q_jN}
  f_j|\cY)-\prod_{j=0}^mS^{p_jn+q_jN}E(f_j|\cY)\big)^2d\nu=0
  \end{equation}
  and
  \begin{equation}\label{3.3}
  \lim_{N\to\infty}\bigg\|\frac 1N\sum_{n=1}^N\big(\prod_{j=1}^mT^{p_jn+q_jN}
  f_j-\prod_{j=1}^mT^{p_jn+q_jN}E(f_j|\cY)\big)\bigg\|_{L^2(X,\mu)}=0
  \end{equation}
  where $p_j,q_j,\, j=1,...,\ell,$ satisfy conditions of Theorem \ref{thm2.2}.
  \end{proposition}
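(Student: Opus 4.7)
The plan is to follow the Furstenberg--Katznelson--Ornstein proof of the analogous statement (Proposition 6.2 in \cite{FKO}), with the key observation that the extra translations $T^{q_jN}$, being constant in the summation variable $n$, do not disturb the van der Corput and fibre-product arguments. I would prove (3.2) by induction on $m$ and then deduce (3.3) from it by a telescoping decomposition.

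For the base case $m = 1$, the identity $E(T^k f_1|\cY) = S^k E(f_1|\cY)$ from (3.1) rewrites the integrand as $E(F \cdot T^{p_1 n + q_1 N} f_1 |\cY)$ with $F = f_0 - E(f_0|\cY)$. Squaring, integrating against $\nu$, and disintegrating $\mu = \int \mu_y \, d\nu(y)$ yields
\begin{equation*}
\int_Y \bigl(E(F \cdot T^k f_1|\cY)\bigr)^2 \, d\nu = \int_{\tilde X} (F \otimes F) \cdot \tilde T^k(f_1 \otimes f_1) \, d\tilde\mu, \qquad k = p_1 n + q_1 N,
\end{equation*}
where $(F \otimes F)(y, z, z') = F(y, z) F(y, z')$. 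Averaging in $n$ and invoking ergodicity of $\tilde T$ (splitting if necessary into residue classes modulo $p_1$) forces $\frac{1}{N}\sum_n \tilde T^{p_1 n + q_1 N}(f_1 \otimes f_1)$ to converge in $L^2(\tilde\mu)$ to the constant $\int f_1 \otimes f_1 \, d\tilde\mu$, and then pairing with $F \otimes F$ gives a vanishing limit because $\int (F \otimes F)\, d\tilde\mu = \int \bigl(\int F(y,\cdot)\, d\te\bigr)^2 d\nu = 0$.

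For the induction step, set $u_n = E\bigl(\prod_{j=0}^m T^{p_j n + q_j N} f_j\bigm|\cY\bigr) - \prod_{j=0}^m S^{p_j n + q_j N} E(f_j|\cY)$ and apply van der Corput's lemma to bound $\frac{1}{N}\sum_n \|u_n\|_{L^2(\nu)}^2$ by an $h$-average of $\frac{1}{N}\sum_n \langle u_n, u_{n+h}\rangle_{L^2(\nu)}$. Expanding the inner product via the pairing $\int E(A|\cY) E(B|\cY)\, d\nu = \int (A \otimes B)\, d\tilde\mu$ converts it into a nonconventional average on $\tilde X$ in the variable $n$, with exponents $p_j n + q_j N$ applied to $f_j \otimes f_j$; after normalising by $\tilde T^{-(p_0 n + q_0 N)} = \mathrm{id}$, the $j = 0$ factor decouples, so the induction hypothesis applies on the fibre product $\tilde X$ (the extension $\tilde T$ of $S$ is again relatively weak mixing, and the exponent differences $p_i - p_j$ remain distinct). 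For (3.3), I would write $f_j = E(f_j|\cY) + g_j$, expand the product, and observe that every cross-term contains at least one $g_k$ with $E(g_k|\cY) = 0$; van der Corput applied to each such term reduces it, by the same fibre-product manipulation, to a nonconventional average covered by (3.2) or by the inductive hypothesis.

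The main obstacle is the careful bookkeeping after van der Corput: one must verify that the $q_j N$ shifts, which do not disappear under the pairing $n \mapsto n + h$, remain inert as constants in the summation variable and therefore do not spoil the reduction, and that the surviving effective exponents keep the distinctness needed to re-enter the induction. This is essentially the same technical hurdle as in the $q_j = 0$ case of \cite{FKO}, ultimately handled by uniformity of the mean ergodic theorem in the starting point of the averaging range.
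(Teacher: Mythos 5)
Your overall strategy (follow \cite{FKO}, treating the shifts $T^{q_jN}$ as constants in the summation variable $n$) is the same as the paper's, and your treatment of (\ref{3.3}) — telescoping so that one factor has zero conditional expectation, then a van der Corput/block-averaging argument whose correlation terms are controlled by the inductive hypothesis (\ref{3.2}) at level $m-1$ together with the relative weak mixing of the fibre product — is essentially the paper's step ``$A_{m-1}\Rightarrow B_m$.'' However, your induction step for (\ref{3.2}) contains a genuine error. The quantity in (\ref{3.2}) is $\frac 1N\sum_{n=1}^N\|u_n\|_{L^2(\nu)}^2$, the \emph{average of the squared norms}, whereas the van der Corput lemma controls only $\|\frac 1N\sum_{n=1}^N u_n\|^2$, the squared norm of the average. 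These are not interchangeable: for an orthonormal sequence $(u_n)$ all correlations $\frac 1N\sum_n\langle u_n,u_{n+h}\rangle$ with $h\ge 1$ vanish and $\|\frac 1N\sum_n u_n\|^2\to 0$, yet $\frac 1N\sum_n\|u_n\|^2\equiv 1$. So the step ``apply van der Corput to bound $\frac 1N\sum_n\|u_n\|^2$ by an $h$-average of correlations'' is false, and it is the stronger statement (average of squares) that is actually needed later (e.g.\ to conclude that the conditional expectation is close to the product for \emph{most} $n$).

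The correct mechanism, which the paper uses, is different: after reducing to $E(f_0|\cY)=0$, one writes $\int\bigl(E(A_n|\cY)\bigr)^2d\nu=\int A_n\otimes A_n\,d\tilde\mu$ and hence
\[
\frac 1N\sum_{n=1}^N\|u_n\|_{L^2(\nu)}^2=\int (f_0\otimes f_0)\Bigl(\frac 1N\sum_{n=1}^N\prod_{j=1}^m\tilde T^{p_jn+q_jN}(f_j\otimes f_j)\Bigr)d\tilde\mu ,
\]
so that the assertion (\ref{3.2}) at level $m$ is obtained from the assertion (\ref{3.3}) \emph{at the same level} $m$ applied to the fibre product $(\tilde X,\tilde\cB,\tilde\mu,\tilde T)$, followed by inserting the conditional expectation and using $E(f_0\otimes f_0|\cY)=\int\bigl(E(f_0|\cY)\bigr)^2d\nu=0$. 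Consequently the two statements cannot be decoupled as you propose (``prove (\ref{3.2}) by induction on $m$ and then deduce (\ref{3.3})''): the induction must alternate, $A_{m-1}\Rightarrow B_m$ and $B_m$ on $\tilde X\Rightarrow A_m$ on $X$. Your base case is fine modulo the fact that ergodicity of $\tilde T$ does not by itself give ergodicity of $\tilde T^{p_1}$; this is exactly what Lemma 8.1 of \cite{FKO} is invoked for. With the induction restructured in this alternating form and the van der Corput argument moved to the $B$-step (where it belongs), your outline becomes the paper's proof.
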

  \begin{proof} The proof proceeds similarly to Theorem 8.3 in \cite{FKO}.
  Recall, that the conditional expectations $E(f_j|\cY)$ can 
  be viewed as functions in both $L^\infty(X,\cB,\mu)$ and in $L^\infty(Y,\cD,
  \nu)$, which is identified with $L^\infty(X,\cB_1,\mu)$, and so this 
  conditional expectation is $\cB_1$-measurable. Denote the assertions 
  (\ref{3.2}) and (\ref{3.3}) by $A_m$ and $B_m$, respectively, where both 
  mean that they hold true for all relatively weak mixing extensions of
  $(Y,\cD,\nu,S)$ and all $L^\infty$ functions on corresponding spaces. 
  
   First, observe that $A_0$ is obvious and $B_0$ will not play a role here
   so we can denote by $B_0$ any correct assertion. Next, we proceed by 
   induction in $m$ showing that (cf. \cite{FKO}),
   
   (i) $A_{m-1}$ implies $B_m$ and
   
   (ii) $B_m$ for $(\tilde X,\tilde\cB,\tilde\mu,\tilde T)$ (which is
   also a relative weak mixing extension of $(Y,\cD,\nu,S)$)
    implies $A_m$ for $(X,\cB,\mu,T)$.
   
   We start with (ii) which is easier. If $f_0$ is measurable with respect
   to $\cB_1=\pi^{-1}(\cD)$, the integrals in (\ref{3.2}) have the form
   \begin{eqnarray*}
   &\int f^2_0\big(E(\prod_{j=1}^mT^{p_jn+q_jN}
  f_j|\cY)-\prod_{j=1}^mS^{p_jn+q_jN}E(f_j|\cY)\big)^2d\nu\\
  &\leq\sup|f^2_0|\int\big(E(\prod_{j=0}^{m-1}T^{\tilde p_jn+\tilde q_jN}
  f_j|\cY)-\prod_{j=0}^{m-1}S^{\tilde p_jn+\tilde q_jN}E(f_j|\cY)\big)^2d\nu
  \end{eqnarray*}
  where $\tilde p_j=p_{j+1}-p_1,\,\tilde q_j=q_{j+1}-q_1$ still satisfy
  conditions of Theorem \ref{thm2.2} and we use (\ref{3.1}) here and that 
  $S$ is $\nu$-preserving. Thus $A_m$ follows from $A_{m-1}$ if $f_0$ is 
  $\cB_1$-measurable (assuming the induction hypothesis for all $p_j,q_j$ 
  satisfying the conditions of Theorem \ref{thm2.2}). 
  
  It follows that writing $f_0=(f_0-E(f_0 |\cY))+E(f_0|\cY)$ and using that
  $(a+b)^2\leq 2a^2+2b^2$ we can assume that $E(f_0|\cY)=0$. With this the
  left hand side of (\ref{3.2}) takes the form
  \[
  \lim_{N\to\infty}\int f_0\otimes f_0\big(\frac 1N\sum_{n=1}^N\prod_{j=1}^m
  \tilde T^{p_jn+q_jN}f_j\otimes f_j\big)d\tilde\mu
  \]
  where $g\otimes g(y,z,z')=g(y,z)g(y,z')$ is a function on $\tilde X$ whenever
  $g$ is a function on $X$ (see (6.6) in \cite{FKO}). By $B_m$ for
  $(\tilde X,\tilde\cB,\tilde\mu,\tilde T)$ the above limit equals
  \[
  \lim_{N\to\infty}\int f_0\otimes f_0\big(\frac 1N\sum_{n=1}^N\prod_{j=1}^m
  \tilde T^{p_jn+q_jN}E(f_j\otimes f_j|\cY)\big)d\tilde\mu.
  \]
  Since the sum here is $\cB_1$-measurable we can insert the conditional 
  expectation inside of the integral concluding that the latter limit is
  zero since
  \[
  E(f_0\otimes f_0|\cY)=\int f_0(y,z)f_0(y,z')d\mu_y(z)d\mu_y(z')d\nu(y)
  =\int E(f_0|\cY)^2(y)d\nu(y)=0
  \]
  completing the proof of (ii).
  
  In order to prove (i) we observe that
  \begin{eqnarray*}
  &\prod_{j=1}^mT^{p_jn+q_jN}f_j-\prod_{j=1}^mT^{p_jn+q_jN}E(f_j|\cY)\\
  &=\sum_{j=1}^m(\prod_{i=1}^{j-1}T^{p_in+q_iN}f_i)T^{p_jn+q_jN}(f_j-E(f_j|\cY))
  \prod_{i=j+1}^mT^{p_i+q_iN}E(f_i|\cY).
  \end{eqnarray*}
  It follows that it suffices to prove $B_m$ under the additional condition
  that for some $j_0,\, 1\leq j_0\leq m$ we have $E(f_{j_0}|\cY)=0$ (replacing
  $f_{j_0}$ by $f_{j_0}-E(f_{j_0}|\cY)$).
  
   We now have to show that $\lim_{N\to\infty}\|\psi_N\|_{L^2}=0$ for
  $\psi_N=\frac 1N\sum_{n=1}^N\prod_{j=1}^mT^{p_jn+q_jN}f_j$ provided 
  $E(f_{j_0}|\cY)=0$. Rewrite
  \[
  \psi_N=\frac 1N\sum_{j=1}^N\big(\frac 1H\sum_{n=j}^{j+H-1}\sum_{i=1}^m
  T^{p_in+q_iN}f_i\big)+O(H/N)
  \]
  where $H$ will be chosen large but much smaller than $N$. By the convexity
  of the function $\vf(x)=x^2$ we have (up to $O(H/N)$),
  \[
  \psi_N^2\leq\frac 1N\sum_{j=1}^N\big(\frac 1H\prod_{n=j}^{j+H-1}\prod_{i=1}^m
  T^{p_in+q_iN}f_i\big)^2.
  \]
  By integration and the fact that $T$ is measure preserving,
  \begin{eqnarray*}
  &\|\psi_N\|^2_{L^2}\leq\frac 1N\sum_{j=1}^N\frac 1{H^2}\sum_{n,k=j}^{j+H-1}
  \int\prod_{i=1}^mT^{p_in+q_iN}f_iT^{p_ik+q_iN}f_id\mu\\
  &=\frac 1{NH^2}\sum_{j=1}^N\sum_{n,k=j}^{j+H-1}\int\prod_{i=1}^m
  T^{(p_i-p_1)n+(q_i-q_1)N}(f_iT^{p_i(k-n)}f_i)d\mu\\
  &=\frac 1{NH^2}\sum_{j=1}^N\sum_{n,k=j}^{j+H-1}\int\prod_{i=0}^{m-1}
  T^{\hat p_in+\hat q_iN}(f_{i+1}T^{p_{i+1}(k-n)}f_{i+1})d\mu
  \end{eqnarray*}
  where $\hat p_i=p_{i+1}-p_1$ and $\hat q_i=q_{i+1}-q_1$ satisfy conditions
  of Theorem \ref{thm2.2}. 
  
  Set $r=k-n$ and observe that a pair $(n,k)$ appears in the above sums only
  if $|r|=|k-n|<H$ and then for $H-|r|$ values of $j$ we rewrite the above
  estimate as
  \[
  \|\psi_N\|^2_{L^2}\leq\frac 1H\sum_{r=1-H}^{H-1}(1-\frac {|r|}H)\big(\frac 1N
  \sum_{n=1}^N\int\prod_{i=0}^{m-1}T^{\hat p_in+\hat q_iN}(f_{i+1}
  T^{p_{i+1}r}f_{i+1})d\mu\big)+O(\frac HN).
  \]
  Inserting conditional expectation inside the integral  and using $A_{m-1}$ 
  for a fixed $H$, every $r$ such that $|r|<H$ and $N$ large
  enough we can replace the integral term in the above inequality by
  \[
  \int\prod_{i=0}^{m-1}T^{\hat p_in+\hat q_iN}E(f_{i+1}T^{p_{i+1}r}f_{i+1}|
  \cY)d\mu.
  \]
  
  Hence, we obtain
  \begin{eqnarray}\label{3.4}
  &\|\psi_N\|^2_{L^2}\leq\frac 1H\sum_{r=1-H}^{H-1}(1-\frac {|r|}N)\\
  &\times\big(\frac 1N\sum_{n=1}^N\int\prod_{i=0}^{m-1}T^{\hat p_in+
  \hat q_iN}E(f_{i+1}T^{p_{i+1}r}f_{i+1}|\cY)d\mu\big)+O(H/N).\nonumber
  \end{eqnarray}
  Next, we estimate the integrals appearing in (\ref{3.4}) by
  \[
  \| E(f_{j_0}T^{p_{j_0}r}f_{j_0}|\cY)\|_{L^2}\prod_{j\ne j_0}\| f_j\|^2_\infty.
  \]
  Since $E(f_{j_0}|\cY)=0$ we obtain from $A_1$ for the case when $q_1=0$,
  which is proved as Lemma 8.1 in \cite{FKO} (where the ergodicity of 
  $\tilde T$ by the definition of weak mixing extensions is used), that
  \[
  \lim_{N\to\infty}\frac 1N\sum_{n=1}^N\int\big(E(f_{j_0}T^{p_{j_0}n}f_{j_0}
  |\cY)\big)^2d\nu=0.
  \]
  Hence, most of the terms in the right hand side of (\ref{3.4}) are small
  provided that $H$ is large enough. Since all terms in the right hand side 
  of (\ref{3.4}) are bounded by $\prod_{j=1}^m\| f_j\|^2_\infty$ and most of 
  them are small, their average in (\ref{3.4}) becomes arbitrarily small when
  $H$ and $N$ are large enough, completing the proof of Proposition 
  \ref{prop3.1}.
  \end{proof}
  
  Now Theorem \ref{thm2.1} is a particular case of (\ref{3.3}) considering a 
  trivial factor $\cY$, i.e. such that the corresponding $\sig$-algebra $\cB_1$
  contains only sets of zero or full measure. As to Theorem \ref{thm2.2} we
  will need the following corollary of Proposition \ref{prop3.1}.
  
  \begin{corollary}\label{cor3.2} Let $(X,\cB,\mu, T)$ be a relative weak 
  mixing extension of $(Y,\cD,\nu,S)$. If the action of $S$ on $\cD$ is GSZ,
  then so is the action of $T$ on $\cB$.
  \end{corollary}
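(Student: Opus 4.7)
The plan is to push the multiple-recurrence estimate from $(X,\cB,\mu,T)$ down to the factor $(Y,\cD,\nu,S)$, where the GSZ hypothesis directly applies, using Proposition \ref{prop3.1} as the bridge. More precisely, assertion (\ref{3.3}) tells us that replacing each $T^{p_jn+q_jN}f_j$ by its conditional version $T^{p_jn+q_jN}E(f_j|\cY)$ inside the Cesaro average produces an $L^2$-negligible change; I would combine this with a Markov-type level-set argument on the factor.

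Concretely, fix $A\in\cB$ with $\mu(A)>0$ and set $g=E(\bbI_A|\cY)$, so $0\le g\le 1$ and $\int g\, d\nu=\mu(A)$. I would apply (\ref{3.3}) with $m=\ell$ and $f_1=\cdots=f_\ell=\bbI_A$, then pair the resulting $L^2$-small difference against $\bbI_A$ via Cauchy--Schwarz; using $p_0=q_0=0$ so that $T^0\bbI_A=\bbI_A$, this gives
\[
\lim_{N\to\infty}\Big|\frac{1}{N}\sum_{n=1}^N\mu\Big(\bigcap_{j=0}^\ell T^{-(p_jn+q_jN)}A\Big)-\frac{1}{N}\sum_{n=1}^N\int\bbI_A\prod_{j=1}^\ell T^{p_jn+q_jN}g\, d\mu\Big|=0.
\]
Since the product of translates of $g$ is $\cB_1$-measurable, I can swap $\bbI_A$ for $E(\bbI_A|\cY)=g$ inside the right-hand integral and then descend to the factor, so that the second average equals $\frac{1}{N}\sum_{n=1}^N\int g\prod_{j=1}^\ell S^{p_jn+q_jN}g\, d\nu$.

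Next, I would choose $\al\in(0,\mu(A))$ and let $B=\{y\in Y:g(y)>\al\}$. The elementary split $\mu(A)=\int_B g\, d\nu+\int_{B^c}g\, d\nu\le\nu(B)+\al$ yields $\nu(B)\ge\mu(A)-\al>0$. On the set $\bigcap_{j=0}^\ell S^{-(p_jn+q_jN)}B$ (the $j=0$ factor being just $B$, since $p_0=q_0=0$) the integrand $g\cdot\prod_{j=1}^\ell S^{p_jn+q_jN}g$ is pointwise bounded below by $\al^{\ell+1}$, hence
\[
\int g\prod_{j=1}^\ell S^{p_jn+q_jN}g\, d\nu\ge\al^{\ell+1}\,\nu\Big(\bigcap_{j=0}^\ell S^{-(p_jn+q_jN)}B\Big).
\]
Since $\nu(B)>0$ and the action of $S$ on $\cD$ is GSZ by hypothesis, there is a syndetic $\cN_B\subset\bbN$ along which the Cesaro averages of $\nu\big(\bigcap_{j=0}^\ell S^{-(p_jn+q_jN)}B\big)$ have strictly positive $\liminf$. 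Setting $\cN_A=\cN_B$ and chaining the three inequalities then delivers the required strictly positive $\liminf$ for the original averages, proving that $T$ is GSZ on $\cB$.

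All the heavy lifting is already packaged into Proposition \ref{prop3.1}, so I do not anticipate any serious obstacle. The only mildly delicate points are the Cauchy--Schwarz step, needed to convert the $L^2$ approximation into an approximation of the scalar quantities $\mu\big(\bigcap_j T^{-(p_jn+q_jN)}A\big)$, and the choice of level $\al$ so that $\nu(B)>0$; both are routine.
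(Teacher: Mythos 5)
Your argument is correct and is essentially the paper's own proof: the paper simply cites (\ref{3.3}) together with Theorem 8.4 of \cite{FKO}, and your write-up (replace $\bbI_A$ by $E(\bbI_A|\cY)$ via (\ref{3.3}) and Cauchy--Schwarz, descend to the factor, pass to the level set $B=\{g>\al\}$ with $\nu(B)\ge\mu(A)-\al>0$, and apply the GSZ property of $S$ to $B$) is exactly that argument spelled out. No gaps.
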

  \begin{proof} The result follows immediately from (\ref{3.3}) in the same
  way as in Theorem 8.4 from \cite{FKO}.
  \end{proof}
  
  We observe that Proposition \ref{prop3.1} implies also that if 
  $(X,\cB,\mu, T)$ is a relative weak mixing extension of 
  $(X,\cB_1,\mu,T)$ and (\ref{2.3}) holds true for any $A\in\cB_1,\,\mu(A)>0$ 
  with $\liminf$ taken over all $N\to\infty$ then the same is true for any
  $A\in\cB,\,\mu(A)>0$, and so the restriction of $\liminf$ to $N\in\cN_A$
  comes not from relative weak extensions but from relative compact extensions 
  which will be studied below.
 
 \subsection{Relative compact extensions}\label{subsec3.3}
 For brevity and following \cite{FKO} we will drop here the word "relative"
 and will speak about compact extensions. Recall, that $(X,\cB,\mu,T)$ is
 said to be a compact extension of $(Y,\cD,\nu,S)$ if there exists a set
 $\cR\subset L^2(X,\cB,\mu)$ dense in $L^2(X,\cB,\mu)$ and such that for every
 $\del>0$ there exist functions $g_1,...,g_m\in L^2(X,\cB,\mu)$ satisfying
 \begin{equation}\label{3.5}
 \sup_{f\in\cR}\sup_{k\in\bbZ}\min_{1\leq j\leq m}\| T^kf-g_j\|_{L^2(\mu_y)}
 <\del\,\,\mbox{for $\nu$-almost all}\,\, y\in Y
 \end{equation}
 where, again, $\mu=\int\mu_yd\nu(y)$.
 
 As explained in Section \ref{subsec3.1} above the proof of Theorem 
 \ref{thm2.2} will be complete after we establish the following result.
 \begin{proposition}\label{prop3.3} Let $(X,\cB,\mu,T)$ be a compact extension 
 of $(Y,\cD,\nu,S)$. If the action of $S$ on $(Y,\cD,\nu)$ is GSZ then so is the
  action of $T$ on $(X,\cB,\mu)$.
  \end{proposition}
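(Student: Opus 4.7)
The plan is to adapt the compact-extension step of Furstenberg--Katznelson--Ornstein (\cite{FKO}, Theorem~9.1), with modifications to track the additional $N$-dependence in the shifts $q_jN$. Fix $A\in\cB$ with $\mu(A)>0$; set $f=\bbI_A$ and $h(y)=E(f|\cY)(y)=\mu_y(A)$, so $\int h\,d\nu=\mu(A)>0$. Pick $c>0$ small enough that $B=\{y:h(y)\geq c\}$ has $\nu(B)>0$, and apply the hypothesis that $S$ is GSZ on $\cD$ to $B$: we obtain a syndetic set $\cN_B\subset\bbN$ and $\alpha>0$ such that
\[
\frac 1N\sum_{n=1}^N\nu\Big(\bigcap_{j=0}^\ell S^{-(p_jn+q_jN)}B\Big)>2\alpha
\]
for every $N\in\cN_B$.

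The second ingredient is a uniform almost-periodic-returns construction at the $\cX$-level. Approximating $f$ in $L^2$ by an element of $\cR$, (\ref{3.5}) provides, for any $\delta>0$, a \emph{single} finite family $g_1,\ldots,g_m\in L^2(X,\cB,\mu)$ that $\delta$-covers $\{T^kf:k\in\bbZ\}$ in $L^2(\mu_y)$, valid for $\nu$-a.e.\ $y$. A pigeonhole argument on the $\ell$-tuple $n\mapsto(i_1(n),\ldots,i_\ell(n))$, where $i_j(n)$ records the index of a $g_{i_j}$ closest to $T^{p_jn+q_jN}f$ in $L^2(\mu_y)$, then produces, for a.e.\ $y$ and every $N$, a set $R(y,N)\subset\bbN$ of lower density $\geq 1/m^\ell$ (independent of $y$ and $N$) such that for every $n\in R(y,N)$ and $j=1,\ldots,\ell$,
\[
\bigl\|T^{p_jn+q_jN}f-T^{q_jN}f\bigr\|_{L^2(\mu_y)}<2\delta.
\]
The uniformity in $N$ is crucial and holds because the single cover $g_1,\ldots,g_m$ is valid for \emph{all} iterates $T^kf$, including $T^{q_jN}f$ for every $N$. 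A telescoping estimate then gives $\bigl|\int\prod_{j=0}^\ell T^{p_jn+q_jN}f\,d\mu_y-\int\prod_{j=0}^\ell T^{q_jN}f\,d\mu_y\bigr|<2(\ell+1)\delta$ for every $n\in R(y,N)$.

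Combining the two ingredients: for $N\in\cN_B$ and $n\in R(y,N)\cap[1,N]$ with $y\in\bigcap_{j=0}^\ell S^{-(p_jn+q_jN)}B$, one must bound below the fiberwise integral $\int\prod_{j=0}^\ell T^{q_jN}f\,d\mu_y$ by a constant $c'=c'(c,\ell)>0$ independent of $N$. Here the disintegration identity $T\mu_y=\mu_{Sy}$ lets one reinterpret the $T^{q_jN}$-shifts as shifts of the base point, so that the assumption $\mu_{S^{p_jn+q_jN}y}(A)\geq c$ feeds directly into the estimate. Integrating over $y$ and averaging over $n$ using the density $1/m^\ell$ of $R(y,N)$ within the $2\alpha$-density set of $\cY$-good times then yields
\[
\liminf_{N\to\infty,\,N\in\cN_B}\frac 1N\sum_{n=1}^N\mu\Big(\bigcap_{j=0}^\ell T^{-(p_jn+q_jN)}A\Big)\geq\frac{2\alpha c'}{m^\ell}>0,
\]
establishing GSZ for $T$ on $\cB$ with $\cN_A=\cN_B$. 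The principal obstacle is the fiberwise lower bound $c'$: the product $\prod_jT^{q_jN}f$ is not $\cB_1$-measurable, so $E(\,\cdot\,|\cY)$ does not apply directly. The FKO approach resolves this by induction on $\ell$, combining Cauchy--Schwarz with a further use of compactness to reduce the $(\ell+1)$-fold fiberwise product to a product of single-factor masses $\mu_{\cdot}(A)$, which are controlled by $h$ on $B$.
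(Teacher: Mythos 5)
There is a genuine gap, and it sits exactly where you flag "the principal obstacle." Your comparison target $\int\prod_{j=0}^\ell T^{q_jN}f\,d\mu_y=\mu_y\big(\bigcap_{j}T^{-q_jN}A\big)$ admits no lower bound: even if $\mu_{S^{q_jN}y}(A)\geq c$ for every $j$, the sets $T^{-q_jN}A$ can be pairwise disjoint on the fiber, so no constant $c'(c,\ell)>0$ exists. The only fiberwise integral one can bound below a priori is the diagonal one, $\int f^{\ell+1}d\mu_y=\mu_y(A)$, so the argument must produce $n$'s for which the whole tuple $(f,T^{p_1n+q_1N}f,\dots,T^{p_\ell n+q_\ell N}f)$ is close to $(f,f,\dots,f)$ in $\oplus_j L^2(\mu_y)$. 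Your pigeonhole step does not deliver this: it gives some colour class of density $\geq 1/m^\ell$ whose members are pairwise close, but the class containing the diagonal may have density zero. The paper (following FKO Theorem 9.1) gets around this by the maximal $\ve_0$-separated set construction: one fixes a reference fiber $y_0$ and pairs $(n_1,N_1),\dots,(n_M,N_M)$ realizing a maximal separated set in $\cL^*(\ell,f,y_0)$, passes to the subset $A_3$ of the base where the separation persists, and applies the GSZ property of $S$ to $A_3$ (not merely to $B=\{\mu_y(A)>c\}$). For $y$ returning to $A_3$ under all $S^{p_ln+q_lN}$ the transported tuples remain $(\ve_0-\eta)$-separated, hence form a maximal, hence $(\ve_0-\eta)$-dense, set; since $(f,\dots,f)$ lies in $\cL^*(\ell,f,y)$, some index $j$ gives the diagonal approximation and $\mu_y\big(\bigcap_l T^{-(p_l(n+n_j)+q_l(N+N_j))}A\big)\geq\tfrac13\mu(A)$. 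Your use of the base's GSZ property only to guarantee $\mu_{S^{p_jn+q_jN}y}(A)\geq c$ discards the information that actually drives the proof.

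A second, related error is the claim that one may take $\cN_A=\cN_B$. The transported configuration carries shifts $N_j$ in the $N$-variable, so positivity of the average is obtained not at the base's good times $N\in\cN$ but at times of the form $N+N_j$ with $j=j(N)$; the paper's estimate (\ref{3.8}) controls the resulting change of the averaging window, and $\cN_A$ is then defined as the set of those shifted times, syndetic because the $N_j$ are finitely many. This adjustment is precisely the new feature of the array setting (the "modification at the end" relative to FKO), and it is the reason Theorem \ref{thm2.2} asserts positivity only along a syndetic set rather than along all of $\bbN$; asserting $\cN_A=\cN_B$ skips it.
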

  
  \begin{proof} We will follow the proof of Theorem 9.1 from \cite{FKO}
  with a modification at the end.
  For an arbitrary $A\in\cB$ with $\mu(A)>0$ we have to show that 
  (\ref{2.3}) holds true. First, similarly to \cite{FKO} we conclude that
  without loss of generality the indicator function $f=\bbI_A$ of $A$ can
   be assumed to belong to the set $\cR$ appearing in the above definition of 
   compact extensions. We will assume for convenience that $T$ is ergodic,
   otherwise pass to an ergodic decomposition. Then $S$ is also ergodic.
   The condition $f\in\cR$ is equivalent to saying that the sequence 
   $\{ T^kf\}_{k\in\bbZ}$ is totally bounded, or relatively compact, in
   $L^2(\mu_y)$ for almost all $y$. Since $T\mu_y=\mu_{Sy}$ we conclude
   that the total boundedness of $\{ T^kf\}_{k\in\bbZ}$ in $L^2(\mu_y)$
   for $y$ in a set of positive measure already implies for an ergodic
   $S$ that $\{ T^kf\}_{k\in\bbZ}$ is totally bounded in a uniform manner
   in $L^2(\mu_y)$ for almost all $y$.
   
   Denote by $\oplus_{j=0}^\ell L^2(\mu_y)$ the direct sum of $\ell+1$ copies 
   of $L^2(\mu_y)$ endowed with the norm $\|(f_0,f_1,...,f_\ell)\|_y=\max_j
   \| f_j\|_{L^2(\mu_y)}$. It is clear that if $f\in\cR$ then the set
   \[
   \cL(\ell,f)=\{(f,T^{p_1n+q_1N}f,T^{p_2n+q_2N}f,...,T^{p_\ell n+q_\ell N}f)\}_
   {n,N\in\bbZ}
   \]
   is totally bounded in $\oplus_{j=0}^\ell L^2(\mu_y)$ for $\nu$-almost all
   $y\in Y$, in fact, uniformly in $y\in Y$. We write
   \[
   \cL(\ell,f,y)=\{(f,T^{p_1n+q_1N}f,T^{p_2n+q_2N}f,...,T^{p_\ell n+q_\ell N}
   f)_y\}_{n,N\in\bbZ}\subset\oplus_{j=0}^\ell L^2(\mu_y)
   \]
   where $(\cdot,...,\cdot)_y$ means that the vector function is considered on
    a fiber above $y\in Y$ and, recall, $f=\bbI_A\in\cR$. Throwing away
    $\nu$-measure zero set of $y$'s we can assume that uniform estimates hold
    true on the whole $Y$.
    
    Set $A_1=\{ y:\,\mu_y(A)>\mu(A)/2\}=\{y:\,\mu_y(A)>0\}$. Then $\nu(A_1)
    >\frac 12\mu(A)$. Indeed, this is clear if $\nu(A_1)=1$ while if  
    $\nu(A_1)<1$ then 
    \begin{eqnarray*}
    &\mu(A)=\int\mu_y(A)d\nu(y)=\int_{A_1}\mu_y(A)d\nu(y)+\int_{Y\setminus A_1}
    \mu_y(A)d\nu(y)\\
    &<\nu(A_1)+\frac 12\mu(A)(1-\nu(A_1)),
    \end{eqnarray*}
    and so $\frac 12\mu(A)<\nu(A_1)(1-\frac 12\mu(A))$.
    Thus, we can assume without loss of generality that
    $\mu_y(A)=0$ for all $y\not\in A_1$. We consider only $y\in A_1$ for
    which the corresponding elements of $\cL(\ell,f,y)$ have all nonzero 
    components, and so these elements have norm $\geq\sqrt {\frac 12\mu(A)}$
    in $L^2(\mu_y)$. The corresponding subset of $\cL(m,f,y)$ is denoted
    by $\cL^*(\ell,f,y)$ and it is still uniformly totally bounded. For each
    $y\in A_1$ and $\ve>0$ let $M(\ve,y)$ denote the maximum cardinality
    of $\ve$-separated sets in $\cL^*(\ell,f,y)$, which is a finite monotone
    decreasing piece-wise constant function of $\ve$ with at most countably
    many jumps. Since $M(\ve,y)$ is measurable as a function of $y$ there
    exist $\ve_0<\mu(A)/10\ell$, $\eta>0$ and $A_2\subset A_1$ with $\nu(A_2)
    >0$ so that $M(\ve,y)$ equals a constant $M$ for $\ve_0-\eta\leq\ve\ve_0$
    and $y\in A_2$.
    
    Take $y_0\in A_2$ and find integers $n_1,...,n_M$ and $N_1,...,N_M$ so that
    $\{(f,T^{p_1n_j+q_1N_j}f,...,T^{p_\ell n_j+q_\ell N_j}f)\},\, j=1,2,...,M,$
    is a maximal $\ve_0$-separated set in $\cL^*(\ell,f,y_0)$. Next, 
     $\| T^{p_ln_i+q_lN_i}f-T^{p_ln_j+q_lN_j}f\|_{L^2(\mu_y)}$, $1\leq i< 
     j\leq M$, $l=0,1,...,\ell,$ as functions on $Y$ are measurable and
     $y_0$ can be chosen so that each neighborhood of values of these functions
     at $y_0$ occurs with positive measure in the set $A_2$. Let now $A_3$ be
     the subset of $A_2$ of points $y$ such that 
     \begin{equation}\label{3.6}
     \| T^{p_ln_i+q_lN_i}f-T^{p_ln_j+q_lN_j}f\|_{L^2(\mu_y)}>
     \| T^{p_ln_i+q_lN_i}f-T^{p_ln_j+q_lN_j}f\|_{L^2(\mu_{y_0})}-\eta
     \end{equation}
     for any $i,j,l$ with $1\leq i\leq j\leq M$ and $0\leq l\leq \ell$. Then
     $\nu(A_3)>0$ by the choice of $y_0$.
     
     Now we use the assumption that the action of $S$ on $(Y,\cD,\nu)$ is 
     GSZ, applying it to $A_3$. Let $n,N\in\bbZ$, $n\leq N$ be such that
     \[
     \nu(\bigcap_{l=0}^\ell S^{-(p_ln+q_lN)}A_3)>0
     \]
     and let $y\in \bigcap_{l=0}^\ell S^{-(p_ln+q_lN)}A_3$. Since $S^{p_ln+q_lN}y
     \in A_3$ for $l=0,1,...,\ell,$ and $A_3\subset \bigcap_{l=0}^\ell 
     S^{-(p_ln_j+q_lN_j)}A_1$ for $j=1,...,M$ by the definition of $\cL^*(\ell,
     f,y)$ (together with (\ref{3.6}))
     then $S^{p_l(n_j+n)+q_l(N_j+N)}y\in A_1$ for $l=0,1,...,\ell$ and 
     $j=1,...,M$.
     
     Similarly to \cite{FKO} we conclude that the vectors $\{(f,T^{p_1(n+n_j)+
     q_1(N+N_j)}f,T^{p_2(n+n_j)+q_2(N+N_j)}f,...,T^{p_\ell(n+n_j)+
     q_\ell(N+N_j)}f),\, j=1,...,M,\}$ are $\ve_0-\eta$ separated in 
     $\cL^*(\ell,f,y)$ for $y\in\bigcap_{l=0}^\ell S^{-(p_ln+q_lN)}A_3$, and so 
     these vectors form a maximal such set which must be then $\ve_0-\eta$
     dense in $\cL^*(\ell,f,y)$. Since $(f,f,...,f)\in\cL^*(\ell,f,y)$ there
     exists $j$ such that $\{(f,T^{p_1(n+n_j)+q_1(N+N_j)}f,...,
     T^{p_\ell(n+n_j)+q_\ell(N+N_j)}f)\}$ is $\ve_0$-close to it. By the
     choice of $\ve_0$ this implies
     \begin{eqnarray*}
     &\mu_y\big(\bigcap_{l=0}^\ell T^{-(p_l(n+n_j)+q_l(N+N_j))}A\big)\\
     &=\int\prod_{l=0}^\ell T^{p_l(n+n_j)+q_l(N+N_j)}fd\mu_y\geq\frac 9{10}
     \mu_y(A)>\frac 13\mu(A).
     \end{eqnarray*}
     
     The index $j$ depends on $y$, so now we sum over $j$ to obtain that for
     each $y\in\bigcap_{l=0}^\ell S^{-(p_ln+q_lN)}A_3$,
     \[
     \sum_{j=1}^M\mu_y\big(\bigcap_{l=0}^\ell T^{-(p_l(n+n_j)+q_l(N+N_j)}A\big)
     \geq\frac 13\mu(A).
     \]
     Integrating over $\bigcap_{l=0}^\ell S^{-(p_ln+q_lN)}A_3$ we derive
     \[
     \sum_{j=1}^M\mu\big(\bigcap_{l=0}^\ell T^{-(p_l(n+n_j)+q_l(N+N_j)}A\big)
     \geq\frac {\mu(A)}3\nu(\bigcap_{l=0}^\ell S^{-(p_ln+q_lN)}A_3).
     \]
     Now we sum in $n$, $1\leq n\leq N$ and multiply by $\frac 1N$,
     \begin{eqnarray}\label{3.7}
    &\sum_{j=1}^M\frac 1N\sum_{1\leq n\leq N}\mu\big(\bigcap_{l=0}^\ell 
    T^{-(p_l(n+n_j)+q_l(N+N_j)}A\big)\\
     &\geq\frac {\mu(A)}3\frac 1N\sum_{1\leq n\leq N}\nu(\bigcap_{l=0}^\ell 
     S^{-(p_ln+q_lN)}A_3).\nonumber
     \end{eqnarray}
     
     Next, set $K_j(N)=N+N_j$. Then
     \begin{eqnarray}\label{3.8}
     &\big\vert\sum_{1\leq n\leq N}\mu\big(\bigcap_{l=0}^\ell 
    T^{-(p_l(n+n_j)+q_l(N+N_j)}A\big)\\
    &-\sum_{1\leq n\leq K_j(N)}\mu\big(\bigcap_{l=0}^\ell 
    T^{-(p_ln+q_lK_j(N))}A\big)\big\vert\leq 2|n_j|+|N_j|.\nonumber
    \end{eqnarray}
    
    Now we use the assumption that the action of $S$ on $(Y,\cD,\nu)$ is
    GSZ which implies that
    \begin{equation}\label{3.9}
    \liminf_{N\to\infty,\, N\in\cN}\frac 1N\sum_{1\leq n\leq N}
    \nu(\bigcap_{l=0}^\ell S^{-(p_ln+q_lN)}A_3)>0
    \end{equation}
    where $\cN$ is an infinite set of positive integers with bounded gaps.
    Define $\cN_j=\cN+N_j=\{ N+N_j:\, N\in\cN\}$, $j=1,...,M,$ which are also
    sets with bounded gaps.  Clearly,
    (\ref{3.9}) implies that there exists $\ve>0$ such that for any $N\in\cN$
    large enough
    \[
    \frac 1N\sum_{1\leq n\leq N}\nu(\bigcap_{l=0}^\ell S^{-(p_ln+q_lN)}A_3)>
    3\ve/\mu(A).
    \]
    Then by (\ref{3.7}) and (\ref{3.8}) we obtain that for any $N\in\cN$
    large enough
    \begin{equation}\label{3.10}
    \max_{1\leq j\leq M}\frac 1{K_j(N)}\sum_{1\leq n\leq K_j(N)}
    \mu(\bigcap_{l=0}^\ell T^{-(p_ln+q_lK_j(N))}A)\geq\frac \ve{2M}.
    \end{equation}
    
    Let 
    \[
    \cN_A=\{ N:\,\frac 1N\sum_{1\leq n\leq N}\mu(\bigcap_{l=0}^\ell 
    T^{-(p_ln+q_lN)}A)\geq\frac \ve{2M}\}.
    \]
    Then by (\ref{3.10}) for any $N\in\cN$ large enough there exists $j$ 
    such that $N+N_j\in\cN_A$. Hence, the gaps in $\cN_A$ are bounded by the
    bound on gaps of $\cN$ plus $2\max_{1\leq j\leq M}N_j$ and, clearly,
    \[
    \liminf_{N\to\infty,\, N\in\cN_A}\frac 1N\sum_{1\leq n\leq N}
    \mu(\bigcap_{l=0}^\ell T^{-(p_ln+q_lN)}A)\geq\frac \ve{2M}>0.
    \]
   This completes the proof of Proposition \ref{prop3.3}, as well, as of
   Theorem \ref{thm2.2}.
  \end{proof}

\section{Commuting transformations}\label{sec4}\setcounter{equation}{0}

In this section we will obtain Theorems \ref{thm2.4}, \ref{thm2.5} and
Corollary \ref{cor2.6}.

\subsection{Factors and extensions with respect to an abelian group of
transformations}\label{subsec4.1}
Let $G$ be a commutative group of transformations acting on $(X,\cB)$ so
that all $T\in G$ preserve a probability measure $\mu$ on $(X,\cB)$. A
probability space $(Y,\cD,\nu)$ is called a factor of $(X,\cB,\nu)$ if there 
exists an onto map $\pi:X\to Y$ such that $\pi\mu=\nu$ and $\pi^{-1}\cD=\cB$.
Define the action of $G$ on $(Y,\cD,\nu)$ by $T\pi x=\pi Tx$ for each
$T\in G$ and $x\in X$. This action preserves the
measure $\nu$ and we say that the system $(X,\cB,\mu,G)$ is an extension
of $(Y,\cD,\nu,G)$ and the latter is called a factor of the former. Clearly, 
this definition is compatible with the one given for one transformation in
Section 3.1.

Next, $(X,\cB,\mu,G)$ is called a relative weak mixing extension of
$(Y,\cD,\nu,G)$ if $(X,\cB,\mu,T)$ is a relative weak mixing extension of
$(Y,\cD,\nu,T)$ for each $T\in G,\, T\ne \mbox{id}$ as defined in Section 
\ref{subsec3.2}.
Furthermore, $(X,\cB,\mu,T)$ is called a (relative) compact extension of
$(Y,\cB,\nu,G)$ if (\ref{3.5}) holds true simultaneously for all $T\in G$
(with the same $\cR,\del$ and $g_1,...,g_m$) for $\nu$-almost all  $y\in Y$.
Finally, following \cite{Fu2} we call an extension $\al:\,(X,\cB,\mu,G)\to
(Y,\cD,\nu,G)$ primitive if $G$ is the direct product of two subgroups 
$G=G_c\times G_w$ where $(X,\cB,\mu,G_c)$ is a compact and $(X,\cB,\mu,G_w)$
 is a relative weak mixing extensions of $(Y,\cD,\nu,G_c)$ and of 
 $(Y,\cD,\nu,G_w)$, respectively.
 
 Next, $\cX=(X,\cB,\mu,G)$ as above will be called GSZ if (\ref{2.8}) 
 holds true for any $A\in\cB$ with $\mu(A)>0$ and all $T_j,\hat T_j\in G$, 
 $j=0,1,...,\ell,$ where the set $\cN_A$ depends on $A$ and $T_j,\hat T_j$'s,
 $T_0=\hat T_0=\mbox{id}$ and $T_1,...,T_\ell$ are distinct and different from the
 identity. Next, we rely on the Theorem 6.17 in \cite{Fu2} describing the 
 structure of extensions and 
 show similarly to Proposition 7.1 in \cite{Fu2} that if each $(X,\cB_\be,\mu,
 G)$ is GSZ for totally ordered (by inclusion) family of $\sig$-algebras then 
 $(X,\sup_\be\cB_\be,\mu,G)$ is also GSZ. It follows that in order to establish
 Theorem \ref{thm2.5} it suffices to show that any primitive extension
 $(X,\cB,\mu,G)$ of $(Y,\cD,\nu,G)$ is GSZ provided $(Y,\cD,\nu,G)$ is GSZ 
 itself.

\subsection{Weak mixing extensions}\label{subsec4.2}
The following result generalizes Proposition \ref{prop3.1} to the case
of several commuting transformations.
\begin{proposition}\label{prop4.1} Suppose that $(X,\cB,\mu, G)$ is a relative
 weak mixing extension of $(Y,\cD,\nu,G)$ where $G$ is a commutative group
of (both $\mu$ and $\nu$) measure preserving transformations as above. Let
$T_1,...,T_\ell\in G$ be distinct and different from identity while $\hat T_1,
...,\hat T_\ell$ be invertible (both $\mu$ and $\nu$) measure preserving
transformations of $(X,\cB,\mu)$ leaving $\cY=(Y,\cD,\nu)$ invariant and 
commuting with each other and with $T_1,...,T_\ell$. Then for each $m\geq 1$,
\begin{equation}\label{4.1}
\lim_{N\to\infty}\frac 1N\sum_{n=1}^N\int\big( E(\prod_{j=0}^mT^n_j\hat T^N_j
f_j|\cY)-\prod_{j=0}^mT^n_j\hat T^N_jE(f_j|\cY)\big)^2d\nu=0,
\end{equation}
where $T_0=\hat T_0=\mbox{id}$, and
\begin{equation}\label{4.2}
\lim_{N\to\infty}\|\frac 1N\sum_{n=1}^N\big(\prod_{j=1}^mT^n_j\hat T^N_jf_j
-\prod_{j=1}^mT^n_j\hat T^N_jE(f_j|\cY)\big)\|_{L^2}=0.
\end{equation}
\end{proposition}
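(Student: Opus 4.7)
The plan is to transcribe the inductive scheme of Proposition \ref{prop3.1} to the group-action setting, with the scalar exponents $p_j n + q_j N$ replaced everywhere by the group elements $T_j^n\hat T_j^N$. Denote the assertions (\ref{4.1}) and (\ref{4.2}) by $A_m$ and $B_m$. Then $A_0$ is trivial, and we establish in turn that (i) $A_{m-1}\Rightarrow B_m$ and (ii) $B_m$ on the fibre product $(\tilde X,\tilde{\cB},\tilde{\mu},G)$ implies $A_m$ on $(X,\cB,\mu,G)$. It is easier to start with (ii).

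For (ii), decompose $f_0 = (f_0-E(f_0|\cY)) + E(f_0|\cY)$. When $f_0$ is $\cY$-measurable, (\ref{3.1}) pulls $f_0$ out of the squared difference in (\ref{4.1}); applying the measure-preserving shift $T_1^{-n}\hat T_1^{-N}$ inside the integral and relabeling $\tilde T_i = T_{i+1}T_1^{-1}$, $\hat{\tilde T}_i = \hat T_{i+1}\hat T_1^{-1}$ for $i=0,\dots,m-1$ reduces to $A_{m-1}$, since for $i\ge 1$ the new $\tilde T_i$ lie in $G$, are distinct, and differ from the identity because the original $T_j$ did. When instead $E(f_0|\cY)=0$, squaring and using the analogue of identity (6.6) in \cite{FKO} converts the integrand in (\ref{4.1}) into
\[
\int (f_0\otimes f_0)\cdot\frac{1}{N}\sum_{n=1}^N \prod_{j=1}^m \tilde T_j^{\,n}\hat{\tilde T}_j^{\,N}(f_j\otimes f_j)\,d\tilde\mu,
\]
where $\tilde T_j$ and $\hat{\tilde T}_j$ denote the diagonal actions on $\tilde X$. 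Because $(\tilde X,\tilde{\cB},\tilde{\mu},G)$ is itself a relative weak mixing extension of $(Y,\cD,\nu,G)$, $B_m$ there replaces the $n$-average by $\prod_{j=1}^m E(f_j\otimes f_j|\cY)$ in $L^2$, and integrating $f_0\otimes f_0$ against this $\cY$-measurable factor vanishes because $E(f_0\otimes f_0|\cY) = E(f_0|\cY)^2 = 0$.

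For (i), after the telescoping identity $\prod a_i - \prod b_i = \sum_i(\prod_{j<i}a_j)(a_i-b_i)(\prod_{j>i}b_j)$, we may assume $E(f_{j_0}|\cY)=0$ for some $1\le j_0\le m$. We then imitate the van der Corput blocking of Proposition \ref{prop3.1}: break the $n$-average into windows of length $H\ll N$, apply convexity of $x\mapsto x^2$ and integrate. Setting $r=k-n$ in the resulting double sum and shifting by $T_1^{-n}\hat T_1^{-N}$ inside each integrand produces terms of the form
\[
\frac{1}{N}\sum_{n=1}^N \int \prod_{i=0}^{m-1}\tilde T_i^{\,n}\hat{\tilde T}_i^{\,N}\bigl(f_{i+1}\,T_{i+1}^{\,r}f_{i+1}\bigr)\,d\mu .
\]
By $A_{m-1}$ (applied at each fixed $r$) we may insert conditional expectations, and then the one-transformation base case (Lemma 8.1 of \cite{FKO}), applicable because $T_{j_0}\ne\mathrm{id}$ and the extension is relatively weak mixing with respect to every nontrivial element of $G$, gives $\frac{1}{H}\sum_{|r|<H}\|E(f_{j_0}T_{j_0}^{\,r}f_{j_0}|\cY)\|_{L^2}^2\to 0$ as $H\to\infty$, which closes the induction exactly as in (\ref{3.4}).

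The main obstacle is bookkeeping rather than conceptual: at each step one must verify that the shifted family $\{\tilde T_i=T_{i+1}T_1^{-1}\}$ consists of distinct non-identity elements of $G$, that it commutes with $\{\hat{\tilde T}_i=\hat T_{i+1}\hat T_1^{-1}\}$, and that the fibre product $(\tilde X,\tilde{\cB},\tilde{\mu},G)$ inherits relative weak mixing over $(Y,\cD,\nu,G)$ with respect to every nontrivial element of $G$. Once these structural checks are in place, the argument runs essentially verbatim from Proposition \ref{prop3.1}.
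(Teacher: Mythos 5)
Your proposal is correct and follows essentially the same route as the paper's own proof: the same induction $A_{m-1}\Rightarrow B_m$ and $B_m$ (on the fibre product) $\Rightarrow A_m$, the same reduction via $\bar T_i=T_{i+1}T_1^{-1}$, the same van der Corput blocking, and the same appeal to Lemma 8.1 of \cite{FKO} for the base estimate. The structural checks you flag at the end (distinctness and non-triviality of the $\bar T_i$, inheritance of relative weak mixing by the fibre product) are exactly the points the paper also verifies, so nothing is missing.
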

\begin{proof} First, observe that considering a weak mixing extension of a
trivial factor we conclude that (\ref{4.2}) implies Theorem \ref{thm2.4}.
Denote the assertions (\ref{4.1}) and (\ref{4.2}) by $A_m$ and $B_m$, 
respectively, and prove them by induction showing that

(i) $A_{m-1}$ implies $B_m$ and

(ii) $B_m$ (for $\tilde X,\tilde\cB,\tilde\mu)$ and $\tilde T_j,
\tilde {\hat T}_j,\, j=1,...,\ell$) implies $A_m$ (for $(X,\cB,\mu)$ and
$T_j,\hat T_j,\, j=1,...,\ell$) where $\tilde X,\tilde\cB,\tilde\mu$ and
$\tilde T$ where defined in Section \ref{subsec3.2}.

First, observe that $A_0$ is obvious and $B_0$ does not play role here 
so we can denote by it any valid assertion. The proof proceeds essentially 
in the same way as for one transformation. We start with (ii) which is
easier. As in the one transformation case we assume first that $f_0$ is 
$\cB_1=\pi^{-1}(\cD)$-measurable. Then the integrals in (\ref{4.1}) have
the form
\begin{eqnarray*}
   &\int f^2_0\big(E(\prod_{j=1}^mT_j^n\hat T_j^N
  f_j|\cY)-\prod_{j=1}^mT_j^n\hat T_j^NE(f_j|\cY)\big)^2d\nu\\
  &\leq\sup|f^2_0|\int\big(E(\prod_{j=0}^{m-1}\bar T_j^n\bar {\hat T}_j^N
f_j|\cY)-\prod_{j=0}^{m-1}\bar T_j^n\bar {\hat T}_j^NE(f_j|\cY)\big)^2d\nu
  \end{eqnarray*}
  where $\bar T_j=T_{j+1}T_1^{-1}$ and $\bar {\hat T}_j=\hat T_{j+1}
  \hat T_1^{-1}$.
  Thus $A_m$ follows from $A_{m-1}$ if $f_0$ is $\cB_1$-measurable. Hence, as 
  in Section \ref{subsec3.2} we can assume that $E(f_0|\cY)=0$. Then the left 
  hand side of (\ref{4.1}) takes the form
\[
  \lim_{N\to\infty}\int f_0\otimes f_0\big(\frac 1N\sum_{n=1}^N\prod_{j=1}^m
  \tilde T_j^n\tilde {\hat T}_j^Nf_j\otimes f_j\big)d\tilde\mu.
  \]
  By $B_m$ for $(\tilde X,\tilde\cB,\tilde\mu)$ and $\tilde T_j,
  \tilde {\hat T}_j,\, j=1,...,m,$ the above limit equals
  \[
  \lim_{N\to\infty}\int f_0\otimes f_0\big(\frac 1N\sum_{n=1}^N\prod_{j=1}^m
  \tilde T_j^n\tilde {\hat T}_j^NE(f_j\otimes f_j|\cY)\big)d\tilde\mu.
  \]
  Since the sum here is $\cB_1$-measurable we can insert the conditional 
  expectation inside of the integral concluding as in Section \ref{subsec3.2}
  that the latter limit is zero completing the proof of (ii).
  
  In order to prove (i) we observe that
  \begin{eqnarray*}
  &\prod_{j=1}^mT_j^n\hat T_j^Nf_j-\prod_{j=1}^mT_j^n\hat T_j^NE(f_j|\cY)\\
  &=\sum_{j=1}^m(\prod_{i=1}^{j-1}T_i^n\hat T_i^Nf_i)T_j^n\hat T_j^N
  (f_j-E(f_j|\cY))\prod_{i=j+1}^mT_i^n\hat T_i^NE(f_i|\cY).
  \end{eqnarray*}
  This enables us to prove $B_m$ under the additional condition
  that for some $j_0,\, 1\leq j_0\leq m$ we have $E(f_{j_0}|\cY)=0$ (replacing
  $f_{j_0}$ by $f_{j_0}-E(f_{j_0}|\cY)$).
  
  It remains to show that $\lim_{N\to\infty}\|\psi_N\|_{L^2}=0$ for
  $\psi_N=\frac 1N\sum_{n=1}^N\prod_{j=1}^mT_j^^n\hat T_j^Nf_j$ provided 
  $E(f_{j_0}|\cY)=0$. Rewrite
  \[
  \psi_N=\frac 1N\sum_{j=1}^N\big(\frac 1H\sum_{n=j}^{j+H-1}\sum_{i=1}^m
  T_j^n\hat T_J^Nf_i\big)+O(H/N)
  \]
  where $H$ will be chosen large but much smaller than $N$. By  convexity
  of the function $\vf(x)=x^2$ we have (up to $O(H/N)$),
  \[
  \psi_N^2\leq\frac 1N\sum_{j=1}^N\big(\frac 1H\prod_{n=j}^{j+H-1}\prod_{i=1}^m
  T_i^n\hat T_i^Nf_i\big)^2.
  \]
  Integrating the above inequality we obtain
  \begin{eqnarray*}
  &\|\psi_N\|^2_{L^2}\leq\frac 1N\sum_{j=1}^N\frac 1{H^2}\sum_{n,k=j}^{j+H-1}
  \int\prod_{i=1}^mT_i^n\hat T_i^Nf_iT_i^k\hat T_i^Nf_id\mu\\
  &=\frac 1{NH^2}\sum_{j=1}^N\sum_{n,k=j}^{j+H-1}\int\prod_{i=0}^{m-1}
  \bar T_i^n\bar {\hat T}_i^N(f_{i+1}T_{i+1}^{(k-n)}f_{i+1})d\mu
  \end{eqnarray*}
  where $\bar T_i=T_{i+1}T_1^{-1}$, $\bar {\hat T}_i=\hat T_{i+1}\hat T_1^{-1}$
  and we observe that $\bar T_i,\, i=1,...,m-1,$ remain distinct and different
  from the identity.
  Writing $r=k-n$ we conclude similarly to Section \ref{subsec3.2} that
  this inequality implies that
  \begin{eqnarray}\label{4.3}
  &\|\psi_N\|^2_{L^2}\leq\frac 1H\sum_{r=1-H}^{H-1}(1-\frac {|r|}H)
  \big(\frac 1N\sum_{n=1}^N\int\prod_{i=0}^{m-1}\\
  &\bar T_i^n\bar {\hat T}_i^N(f_{i+1}T_{i+1}^rf_{i+1})d\mu\big)+O(H/N).
  \nonumber\end{eqnarray}
 Inserting conditional expectation inside the integral in the right hand side
 of (\ref{4.3}) and using $A_{m-1}$ 
  for a fixed $H$, every $r$ such that $|r|<H$ and $N$ large
  enough we can replace the integral term in the above inequality by
  \[
  \int\prod_{i=0}^{m-1}\bar T_i^n\bar {\hat T}_i^NE(f_{i+1}T_{i+1}^rf_{i+1}|
  \cY)d\mu
  \]
 which gives
 \begin{eqnarray}\label{4.4}
  &\|\psi_N\|^2_{L^2}\leq\frac 1H\sum_{r=1-H}^{H-1}(1-\frac {|r|}H)
  \big(\frac 1N\sum_{n=1}^N\int\prod_{i=0}^{m-1}\\
  &\bar T_i^n\bar {\hat T}_i^NE(f_{i+1}T_{i+1}^rf_{i+1}|\cY)d\mu\big)+O(H/N).
  \nonumber\end{eqnarray}
  
  Next, we estimate the integrals appearing in (\ref{4.4}) by
  \[
  \| E(f_{j_0}T_{j_0}^rf_{j_0}|\cY)\|_{L^2}\prod_{j\ne j_0}\| f_j\|^2_\infty.
  \]
  Since we assume that $E(f_{j_0}|\cY)=0$ then by $A_1$ for the case when
  $\hat T_1=\mbox{id}$ which is proved as Lemma 8.1 in \cite{FKO}
  (where ergodicity of $\tilde T_{j_0}$ is used which we know from the
  definition of relative weak mixing),
\[
\lim_{N\to\infty}\frac 1N\sum_{n=1}^N\int\big(Ef_{j_0}T_{j_0}^nf_{j_0}|\cY)
\big)^2=0.
\]
The concluding argument is the same as in Proposition \ref{prop3.1} which 
yields $A_m$ and completes the proof of Proposition \ref{prop4.1}.
\end{proof}

\subsection{Primitive extensions}\label{subsec4.3}
Let  $\al:\,\cX=(X,\cB,\mu,G)\to\cY=(Y,\cD,\nu,G)$ be a primitive extension, 
so that $G=G_c\times G_w$ with $\al:\,(X,\cB,\mu,G_c)\to(Y,\cD,\nu,G_c)$
 and $\al:\,(X,\cB,\mu,G_w)\to(Y,\cD,\nu,G_w)$ are relative compact and
 weak mixing extensions, respectively. Here $G$ is supposed to be a finitely
 generated free abelian group and $\mu=\int\mu_yd\nu(y)$. It follows from
 Proposition \ref{prop4.1} that
 \begin{lemma}\label{lem4.2}
 Let $S_1,...,S_m\in G_w$ be distinct and different from the identity,
 $\hat S_1,...,\hat S_m\in G$ be arbitrary and $f\in L^\infty(X)$. Define
 $\psi(y)=\int fd\mu_y$. Then for each $\ve,\del>0$ the number 
 $\#\cN_{\ve,\del,N}$ of elements of the set 
\begin{equation*}
\cN_{\ve,\del,N}=\{ n\leq N:\,\nu\{ y\in Y:\, |\int\prod_{i=1}^mS_i^n\hat 
S_i^Nfd\mu_y-\prod_{i=1}^m\psi(S^n_i\hat S_i^Ny)|>\ve\}>\del\}
\end{equation*}
satisfies
\begin{equation}\label{4.5}
\#\cN_{\ve,\del,N}\leq\gam_{\ve,\del}(N)N\quad\mbox{where}\quad 
\gam_{\ve,\del}(N)\to 0\,\,\mbox{as}\,\, N\to\infty
\end{equation}
denoting by $\#\Gam$ the cardinality of a set $\Gam$.
\end{lemma}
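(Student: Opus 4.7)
The plan is to deduce Lemma \ref{lem4.2} as a direct application of (\ref{4.1}) from Proposition \ref{prop4.1}, combined with a single Chebyshev estimate.

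First I would set
\[
\Phi_{n,N}(y) := E\big(\prod_{i=1}^m S_i^n\hat S_i^N f\,\big|\,\cY\big)(y) - \prod_{i=1}^m \psi(S_i^n\hat S_i^N y)
\]
and identify this with the integrand in (\ref{4.1}). By definition $\psi = E(f|\cY)$, and by the equivariance property $T E(f|\cY) = E(Tf|\cY)$ of (\ref{3.1}), the pointwise value $\prod_{i=1}^m S_i^n\hat S_i^N E(f|\cY)(y)$ equals $\prod_{i=1}^m \psi(S_i^n\hat S_i^N y)$, since each factor is $E(f|\cY)$ pulled back by $S_i^n\hat S_i^N$ and evaluated at $y$. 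Thus $\Phi_{n,N}^2$ coincides with the square appearing in (\ref{4.1}) upon taking $f_j=f$ for $j=1,\dots,m$, $f_0\equiv 1$, $T_j=S_j$, $\hat T_j=\hat S_j$ and $T_0=\hat T_0=\mbox{id}$.

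Next I would verify that Proposition \ref{prop4.1} applies with these choices to the subgroup $G_w\subset G$. Since $\al:(X,\cB,\mu,G_w)\to(Y,\cD,\nu,G_w)$ is a relative weak mixing extension by the definition of a primitive extension, and $S_1,\dots,S_m\in G_w$ are distinct and different from the identity, they play the role of the $T_i$'s in Proposition \ref{prop4.1}. The $\hat S_i\in G$ commute with each other and with the $S_j$'s because $G$ is abelian, preserve $\mu$ and $\nu$, and leave $\cY$ invariant. Consequently (\ref{4.1}) yields
\[
\lim_{N\to\infty}\frac 1N\sum_{n=1}^N\int\Phi_{n,N}(y)^2\,d\nu(y)=0.
\]

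Finally, Chebyshev's inequality finishes the proof. If $n\in\cN_{\ve,\del,N}$ then by definition $\nu\{y:|\Phi_{n,N}(y)|>\ve\}>\del$, so $\int\Phi_{n,N}^2\,d\nu\geq\ve^2\del$. Summing over such $n$ and bounding by the full sum gives
\[
\ve^2\del\,\frac{\#\cN_{\ve,\del,N}}{N}\leq\frac 1N\sum_{n=1}^N\int\Phi_{n,N}^2\,d\nu\longrightarrow 0,
\]
so one may take $\gam_{\ve,\del}(N):=(\ve^2\del)^{-1}N^{-1}\sum_{n=1}^N\int\Phi_{n,N}^2\,d\nu$, which tends to $0$ as $N\to\infty$. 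There is no genuine obstacle; the only nontrivial point is the verification that Proposition \ref{prop4.1} is indeed applicable to the action of $G_w$ with the $\hat S_i$ coming from the larger group $G$, but this is built into the hypotheses of that proposition.
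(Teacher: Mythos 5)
Your proposal is correct and follows exactly the paper's own route: the paper likewise invokes (\ref{4.1}) of Proposition \ref{prop4.1} with $T_j=S_j$, $\hat T_j=\hat S_j$, $f_j=f$ and then states that (\ref{4.5}) ``follows,'' the omitted step being precisely the Chebyshev estimate you wrote out. The only difference is that you make explicit the verification of the hypotheses and the final counting argument, which the paper leaves implicit.
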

\begin{proof} Since $\psi=E(f|\cY)$ then by Proposition \ref{prop4.1},
\[
\lim_{N\to\infty}\frac 1N\sum_{n=1}^N\int\big( E(\prod_{i=1}^mS_i^n\hat S_i^N
|\cY)-\prod_{i=1}^mS_i^n\hat S_i^NE(f|\cY)\big)^2d\nu=0
\]
and (\ref{4.5}) follows.
\end{proof}

The implications of compactness which will be needed below are summarized in
the following lemma (see Lemma 7.10 in \cite{Fu2}).
\begin{lemma}\label{lem4.3} Let $A\in\cB$ with $\mu(A)>0$. Then we can find
a measurable set $A'\subset A$ with $\mu(A')$ as close to $\mu(A)$ as we like
and such that for any $\ve>0$ there exist a finite set of functions 
$g_1,...,g_K\in\cH=L^2(X,\cB,\mu)$ and a measurable function $k:\, Y\times G_c
\to\{1,...,K\}$ with the property that $\| R\bbI_{A'}-g_{k(y,R)}\|_y<\ve$
for $\nu$ almost all $y\in Y$ and every $R\in G_c$.
\end{lemma}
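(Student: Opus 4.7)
The strategy is to follow Furstenberg's proof of Lemma 7.10 in \cite{Fu2}: approximate $\bbI_A$ in $L^2(X,\cB,\mu)$ by a function $f$ from the dense set $\cR$ that enters the definition of compact extension, construct $A'$ from the superlevel set of $f$, and apply the compactness property directly to $f$, transferring the resulting cover back to $R\bbI_{A'}$ via the triangle inequality.

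First I would use density of $\cR$ in $L^2(X,\cB,\mu)$ to pick $f\in\cR$ with $\|\bbI_A-f\|_{L^2(\mu)}<\eta$ for $\eta$ small compared to the desired $\ve$; by truncation and re-approximation one may assume $0\leq f\leq 1$. Define $A'=A\cap\{x:f(x)\geq 1/2\}$. The Chebyshev estimate applied to $A\setminus A'\subset\{|\bbI_A-f|\geq 1/2\}$ gives $\mu(A\setminus A')\leq 4\eta^2$, so $\mu(A')$ is as close to $\mu(A)$ as desired, and similarly $\|\bbI_{A'}-f\|_{L^2(\mu)}$ remains small.

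Next I would apply the compact-extension hypothesis for the group $G_c$ to the function $f\in\cR$: for any $\del>0$ there exist $h_1,\ldots,h_m\in L^2(X,\mu)$ such that $\min_j\|Rf-h_j\|_y<\del$ for every $R\in G_c$ and $\nu$-a.e.\ $y$. The equivariance $R\mu_y=\mu_{Ry}$ together with the $G_c$-invariance of $\nu$ yields $\|R\bbI_{A'}-Rf\|_y=\|\bbI_{A'}-f\|_{L^2(\mu_{Ry})}$, so the triangle inequality $\|R\bbI_{A'}-h_j\|_y\leq\|R\bbI_{A'}-Rf\|_y+\|Rf-h_j\|_y$ reduces the problem to controlling the first summand uniformly in $R$. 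The covering family is then taken to be $\{g_j\}=\{h_j\}$ possibly enriched by a bounded number of correction functions to absorb the $\bbI_{A'}-f$ discrepancy. The selector $k(y,R)$ is defined as the smallest index achieving $\min_l\|R\bbI_{A'}-g_l\|_y$; joint measurability in $(y,R)$ is automatic since $G_c\cong\bbZ^d$ is countable and the fiberwise norm is measurable in $y$ for each fixed $(R,l)$.

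The main obstacle is the uniformity in $R\in G_c$ in the transfer step: the $L^2(\mu)$-approximation $\bbI_A\approx f$ gives fiberwise $L^2(\mu_y)$-closeness of $R\bbI_{A'}$ to $Rf$ only on a good set of $y$'s which a priori depends on $R$, and since $G_c$ is countably infinite one cannot naively intersect these sets over all $R$. The resolution exploits the $G_c$-invariance of $\nu$ together with the precompactness of the orbit $\{Rf\}_{R\in G_c}$ in the fiber norm, and enlarging the finite family $\{h_j\}$ slightly yields $\{g_1,\ldots,g_K\}$ giving uniform $\ve$-closeness to $R\bbI_{A'}$ for every $R\in G_c$ and $\nu$-a.e.\ $y$.
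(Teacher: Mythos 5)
The paper offers no proof of this lemma; it simply cites Lemma 7.10 of \cite{Fu2}, so the benchmark is Furstenberg's argument. Your proposal sets up the right ingredients: approximation of $\bbI_A$ by an almost periodic $f\in\cR$, the identity $\|R\bbI_{A'}-Rf\|_y=\|\bbI_{A'}-f\|_{L^2(\mu_{Ry})}$, and the triangle-inequality transfer; and you correctly locate the obstruction, namely that smallness of $\|\bbI_{A'}-f\|_{L^2(\mu)}$ only controls the fiberwise error outside a small set $E\subset Y$, while $Ry$ may visit $E$ as $R$ ranges over the countable group $G_c$, so the good sets cannot be intersected over all $R$. But your final paragraph does not actually resolve this: $G_c$-invariance of $\nu$ gives $\nu(R^{-1}E)=\nu(E)$ and says nothing about $\bigcup_R R^{-1}E$; precompactness of $\{Rf\}_R$ in the fiber norms controls $f$, not the discrepancy $\bbI_{A'}-f$; and "enriching the family by a bounded number of correction functions" is precisely the step that must be exhibited. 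There is also a second, independent gap in the quantifier structure: the lemma requires one set $A'$ that works for every $\ve>0$, whereas your $A'=A\cap\{f\ge 1/2\}$ is built from a single approximant $f$ chosen after $\ve$; with $f$ fixed, the first summand of your triangle inequality is of order $\|\bbI_{A'}-f\|_{L^2(\mu_{Ry})}$, which on a positive-measure set of fibers does not tend to $0$, so no enlargement of the $g_j$'s can reach an arbitrary tolerance $\ve$.

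Both gaps are closed by the device at the heart of Furstenberg's proof, which is absent from yours: delete whole fibers, not level sets. Choose AP approximants $f_n$ with $\|\bbI_A-f_n\|_{L^2(\mu)}^2<\delta_n^2\,4^{-n}\rho^2$ where $\delta_n\to 0$, set $E_n=\{y:\|\bbI_A-f_n\|_{L^2(\mu_y)}\ge\delta_n\}$, so that $\nu(E_n)\le 4^{-n}\rho^2$ by Chebyshev on $Y$, and put $A'=A\cap\pi^{-1}\bigl(Y\setminus\bigcup_n E_n\bigr)$, whence $\mu(A\setminus A')\le\sum_n\nu(E_n)\le\rho^2$. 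Because the deleted set is a union of entire fibers, for every $R\in G_c$ and every $y$ the function $R\bbI_{A'}$ on the fiber over $y$ is either identically $0$ (when $Ry\in\bigcup_n E_n$) or coincides there with $R\bbI_A$, in which case $\|R\bbI_{A'}-Rf_n\|_y=\|\bbI_A-f_n\|_{L^2(\mu_{Ry})}<\delta_n$ for every $n$. Given $\ve>0$, pick $n$ with $2\delta_n<\ve$, take the finite family furnished by the compactness property of $f_n$, and adjoin $g_0=0$ to absorb the bad fibers; this yields the required bound for all $R\in G_c$ and $\nu$-a.e.\ $y$ simultaneously, for every $\ve$, from the single set $A'$. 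Without this fiber-deletion step (and the countable sequence of approximants), your argument does not close.
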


We will need also the following consequence of the multidimensional van der
Waerden theorem.
\begin{lemma}\label{lem4.4} (i) Let the number $K$ be given and let 
$T_1,T_2,...,T_H\in G$. There is a finite subset $\Psi\subset G$ and a number 
$M<\infty$ such that for any map $k:\, G\to\{ 1,2,...,K\}$ there exist
$T'\in \Psi$ and $m\in\bbN,\, 1\leq m\leq M$ such that
\[
k(T'T_i^m)=const,\, i=1,...,H;
\]
(ii) Let the number $K$ be given and $T_j,\hat T_j\in G,\, j=1,...,H$. There is
a finite set $\Psi\subset G$ and a number 
$M<\infty$ such that for any map $k:\, G\times G\to\{ 1,2,...,K\}$ satisfying
$k(T,S)=\hat k(TS)$ for some $\hat k:\, G\to\{ 1,2,...,K\}$ there exist
$T'\in \Psi$ and $m\in\bbN,\, 1\leq m\leq M$ such that
\[
k(T'T_i^m,\hat T_i^m)=const,\, i=1,...,H.
\]
\end{lemma}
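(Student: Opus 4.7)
The plan is to reduce both parts to the compactness (finitary) form of Gallai's theorem, i.e., the multidimensional van der Waerden theorem, which states: for any finite $F \subset \bbZ^d$ and any number of colors $K$, there exist a finite set $\Psi \subset \bbZ^d$ and a number $M < \infty$ such that every coloring $k: \bbZ^d \to \{1,\ldots,K\}$ admits $a \in \Psi$ and $m \in \{1,\ldots,M\}$ with $k(a + mv)$ constant in $v \in F$. Since $G$ is a finitely generated free abelian group, it is isomorphic to some $\bbZ^d$, and I identify the multiplicative notation in the statement with the additive notation of $\bbZ^d$: $T_i$ corresponds to a vector $v_i \in \bbZ^d$, $T_i^m$ to $mv_i$, and $T'T_i^m$ to $t' + mv_i$.

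For part (i), taking $F = \{v_1,\ldots,v_H\}$, the conclusion $k(T'T_i^m) = \textrm{const}$ for $i=1,\ldots,H$ becomes exactly $k(t' + mv_i) = \textrm{const}$, which is what Gallai's theorem in its compact form delivers uniformly in $k$. The only small bookkeeping is the restriction $m \geq 1$, which is standard in the statement of the theorem (the dilation parameter is required to be positive).

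For part (ii), I use commutativity of $G$ to collapse the two-variable coloring to a one-variable one. Setting $U_i = T_i \hat{T}_i \in G$, the equality $T'T_i^m \cdot \hat{T}_i^m = T' U_i^m$ holds since $G$ is abelian. The hypothesis $k(T,S) = \hat{k}(TS)$ then gives
\[
k(T'T_i^m, \hat{T}_i^m) = \hat{k}(T'T_i^m \hat{T}_i^m) = \hat{k}(T' U_i^m),
\]
so it suffices to apply part (i) to the coloring $\hat{k}$ and the finite configuration $\{U_1,\ldots,U_H\}$ to obtain the required $\Psi$ and $M$ (depending only on $K$ and the $U_i$'s, hence only on $K$ and the given $T_i, \hat{T}_i$'s) such that $\hat{k}(T' U_i^m)$ is constant in $i$ for some $T' \in \Psi$ and $m \in \{1,\ldots,M\}$.

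There is no real obstacle here: the essential content is entirely the existence of Gallai/van der Waerden configurations with bounded parameters, and the abelian commutation trick $T_i^m \hat{T}_i^m = (T_i \hat{T}_i)^m$ which makes part (ii) a direct consequence of part (i). The only care needed is to cite the correct (compact / finitary) formulation of Gallai's theorem so that $\Psi$ and $M$ can be chosen uniformly in the coloring $k$, rather than the qualitative infinitary version that produces only one monochromatic copy at a time.
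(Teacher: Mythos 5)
Your proposal is correct and follows essentially the same route as the paper: part (i) is exactly the finitary multidimensional van der Waerden (Gallai) statement, which the paper simply cites as Lemma 7.11 of Furstenberg's book, and your part (ii) uses the identical commutativity reduction $k(T'T_i^m,\hat T_i^m)=\hat k(T'(T_i\hat T_i)^m)$ with $S_i=T_i\hat T_i$ that the paper uses. No gaps.
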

\begin{proof} The assertion (i) is Lemma 7.11 in \cite{Fu2}. In order to
prove (ii) we apply (i) with $\hat k$ and $S_i=T_i\hat T_i,\, i=1,...,H,$
in place of $k$ and $T_1,...,T_H$, respectively, there. With $\Psi$ and $T'$
given by (i) for such $\hat k$ and $S_i$'s we obtain
\[
k(T'T_i^m,\hat T_i^m)=\hat k(T'(T_i\hat T_i)^m)=\hat k(T'S_i^m)=const
\]
for $i=1,...,H$.
\end{proof}

The following is the main result of this section which, as explained in 
Section \ref{subsec4.1}, yields Theorem \ref{thm2.5}.
\begin{proposition}\label{prop4.5}  Let $\al:\,\cX=(X,\cB,\mu,G)\to\cY=
(Y,\cD,\nu,G)$ be a primitive extension and $\cY$ be a GSZ system. Then
$\cX$ is also a GSZ system.
\end{proposition}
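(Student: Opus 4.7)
I will follow the strategy of the proof of the SZ property for primitive extensions in Chapter 7 of \cite{Fu2}, adapted to the nonconventional arrays $T_j^n \hat T_j^N$. The ingredients are Lemma \ref{lem4.3} (compact approximation), Lemma \ref{lem4.2} (weak mixing for the arrays), Lemma \ref{lem4.4}(ii) (multidimensional van der Waerden), together with the assumed GSZ property of $\cY$. Fix $A \in \cB$ with $\mu(A) > 0$, distinct non-identity $T_1, \dots, T_\ell \in G$ and arbitrary $\hat T_1, \dots, \hat T_\ell \in G$; using $G = G_c \times G_w$, write $T_j = R_j S_j$ and $\hat T_j = \hat R_j \hat S_j$ with $R_j, \hat R_j \in G_c$ and $S_j, \hat S_j \in G_w$.

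Fix a small $\varepsilon > 0$ and, via Lemma \ref{lem4.3}, a subset $A' \subset A$ with $\mu(A')$ close to $\mu(A)$, functions $g_1, \dots, g_K$, and a coloring $k : Y \times G_c \to \{1, \dots, K\}$ satisfying $\|R \bbI_{A'} - g_{k(y, R)}\|_y < \varepsilon$ for $\nu$-a.e.\ $y$ and every $R \in G_c$. Put $f = \bbI_{A'}$, $\psi(y) = \mu_y(A')$, and $A_1 = \{y : \psi(y) > \mu(A')/2\}$, so $\nu(A_1) > 0$ by the argument of Proposition \ref{prop3.3}. The GSZ hypothesis on $\cY$, applied to $A_1$ and the induced actions of $T_j, \hat T_j$ on $Y$, yields a syndetic $\cN_0 \subset \bbN$ and $\delta_0 > 0$ with
\[
\liminf_{\substack{N \to \infty \\ N \in \cN_0}} \frac{1}{N} \sum_{n=1}^N \nu\Bigl(\bigcap_{j=0}^\ell (T_j^n \hat T_j^N)^{-1} A_1\Bigr) \geq \delta_0.
\]

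The core combinatorial step uses Lemma \ref{lem4.4}(ii). Applied to the fiber coloring $\hat k_y(R) = k(y, R)$ of $G_c$, with the role of $T_i, \hat T_i$ in that lemma played by $R_i R_0^{-1}$ and $\hat R_i \hat R_0^{-1}$ for $i = 1, \dots, \ell$, it produces a finite set $\Psi \subset G_c$ and $M < \infty$ such that for each $y$ there exist $T'(y) \in \Psi$ and $m(y) \in \{1, \dots, M\}$ with $k(y, T'(y) R_i^{m(y)} \hat R_i^{m(y)})$ constant in $i = 0, \dots, \ell$. Pigeonholing over the finitely many possible values of $(T'(y), m(y))$ on $A_1$ yields a positive $\nu$-measure subset $A_2 \subset A_1$ on which a common pair $(T', m)$ and a common reference color index $c$ work. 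For $N \in \cN_0$ large and those $n \in [1, N]$ outside the vanishing-density exceptional set of Lemma \ref{lem4.2}, and for $y$ in $A_2 \cap \bigcap_{j=0}^\ell (T_j^{n+m} \hat T_j^{N+m})^{-1} A_1$, the compact approximation gives $\| R_j^{n+m} \hat R_j^{N+m} f - g_c \|_y < \varepsilon$ for all $j$, whence
\[
\int \prod_{j=1}^\ell T_j^{n+m} \hat T_j^{N+m} f \, d\mu_y \geq \prod_{j=1}^\ell \psi\bigl(T_j^{n+m} \hat T_j^{N+m} y\bigr) - O(\varepsilon) \geq (\mu(A')/2)^\ell - O(\varepsilon),
\]
by Lemma \ref{lem4.2}. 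Choosing $\varepsilon$ small, integrating in $y$, averaging in $n$, and passing to the syndetic set $\cN_A$ obtained from $\cN_0$ by absorbing the translations in $\Psi \times \{1, \dots, M\}$, produces
\[
\liminf_{\substack{N \to \infty \\ N \in \cN_A}} \frac{1}{N} \sum_{n=1}^N \mu\Bigl(\bigcap_{j=0}^\ell (T_j^n \hat T_j^N)^{-1} A\Bigr) > 0,
\]
as required.

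The main obstacle will be the uniformization of the $y$-dependent choices $(T'(y), m(y))$ into a single pair working on a positive-measure $A_2$, and ensuring that the combinatorial shift produced by Lemma \ref{lem4.4}(ii) is simultaneously compatible with the base recurrence along $\cN_0$ and with the vanishing-density exceptional set of Lemma \ref{lem4.2}. This is the multi-parameter counterpart of the maximal $\varepsilon$-separated set argument in Proposition \ref{prop3.3}, with the finite color palette from Lemma \ref{lem4.3} playing the role of an $\varepsilon$-net in the direct sum $\oplus_{j=0}^\ell L^2(\mu_y)$.
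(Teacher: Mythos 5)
Your outline assembles the right ingredients (Lemmas \ref{lem4.2}, \ref{lem4.3}, \ref{lem4.4}(ii) and the GSZ hypothesis on $\cY$), but the central step fails. Your displayed inequality bounding $\int\prod_j T_j^{n+m}\hat T_j^{N+m}f\,d\mu_y$ is obtained by applying Lemma \ref{lem4.2} to the full transformations $T_j=R_jS_j$; that lemma requires the transformations raised to the power $n$ to lie in $G_w$, so it does not apply. The paper keeps the two mechanisms separate: the weak-mixing estimate (\ref{4.9}) controls only the pure $G_w$-part $\mu_y(\bigcap_j S_j^{-n}\hat S_j^{-N}A)$, while the compact part enters through the fiberwise symmetric-difference comparison (\ref{4.10}) of $S_j^{-n}R_i^{-n}\hat S_j^{-N}\hat R_i^{-N}A$ with $S_j^{-n}\hat S_j^{-N}A$, for which the normalization $R_1=\hat R_1=\mathrm{id}$ in (\ref{4.6}) is essential. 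Your argument has no analogue of (\ref{4.10}).

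The deeper gap is in the van der Waerden step. You apply Lemma \ref{lem4.4}(ii) to the static fiber coloring $\hat k_y(R)=k(y,R)$, obtaining a pair $(T',m)$ for which the functions $T'R_i^m\hat R_i^m\bbI_{A'}$ share one color at the single time $m$. This gives no control over $R_j^{n+m}\hat R_j^{N+m}\bbI_{A'}$ for the positive-density many $n$ and syndetically many $N$ you need: Lemma \ref{lem4.3} only says each translate is $\ve$-close to \emph{some} $g_{k(y,R)}$, and nothing forces the color at time $n+m$ to agree with the color at time $m$, so the common $g_c$ in your estimate is unjustified. The additive shift $(n,N)\mapsto(n+m,N+m)$ is the mechanism of the one-transformation Proposition \ref{prop3.3} (maximal $\ve$-separated sets); it does not transfer to the group setting. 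The paper instead builds the $q,Q$-dependent coloring $k_{q,Q}(y,SR,\hat S\hat R)=k(S^q\hat S^Qy,R^q\hat R^Q)$, already incorporating the base-recurrence time $q\le Q$ and evaluated at the moved fiber; Lemma \ref{lem4.4}(ii) applied to it yields, via (\ref{4.13})--(\ref{4.14}), control at the \emph{dilated} times $n=qm$, $N=Qm$ for the translated point $(T')^qy$. The GSZ property of the base is then applied to the sets $C_q=\bigcap_{j,m,T'}S_j^{-mq}\hat S_j^{-Qm}(T')^{-q}B$, the choices $(m(q),T'(q))$ are uniformized by pigeonholing in $q$, and the resulting syndetic set is $\{m(q)Q:\,Q\in\cN'\}$ --- bounded dilations of $\cN'$, not an additive translate of $\cN_0$. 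Without this construction the compactness cannot be brought to bear on more than one time pair per fiber, so the proposal as written does not close.
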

\begin{proof} We proceed similarly to Proposition 7.12 in \cite{Fu2} adapting
the proof there to our situation. Let $A\in\cB$ with $\mu(A)>0$ and let
$T_1,...,T_\ell,\hat T_1,...,\hat T_\ell\in G$. Replacing $A$ by a slightly
smaller set, we can assume that $\bbI_A$ has the compactness property described
 in Lemma \ref{lem4.3}. Writing $\mu(A)=\int\mu_y(A)d\nu(y)$, we see that
 there exists a measurable subset $B\subset Y,\,\nu(B)>0$ with $\mu_y(A)>a=
 \mu(A)/2$ for all $y\in B$. We express $T_j,\hat T_j$ as products of elements
 in $G_c$ and in $G_w$ and assume without loss of generality that for all
 $n\leq N$,
 \begin{equation}\label{4.6}
 \{ T^n_j\hat T^N_j,\, j=1,...,\ell,\}\subset\{ R^n_iS^n_j\hat R_i^N\hat S_j^N,
 \, i=1,...,r;\, j=1,2,...,s,\}
 \end{equation}
 where $R_1=\hat R_1=\mbox{id}$, $R_i,\hat R_i\in G_c,\, i=1,...,r$, 
 $S_j,\hat S_j\in G_w,\, j=1,...,s,$ and $S_1,...,S_s$ are distinct. Since the
 set of transformations in the right hand side of (\ref{4.6}) is at least as
 large as the one in the left hand side of (\ref{4.6}) then (\ref{2.8})
 will follow if we prove that for an infinite syndetic set $\cN_A\subset\bbN$,
 \begin{equation}\label{4.7}
 \lim_{N\to\infty,\, N\in\cN_A}\frac 1N\sum_{n=1}^N\mu\big(\bigcap_{i,j}
 (R^n_iS^n_j\hat R_i^N\hat S_j^N)^{-1}A\big)>0.
 \end{equation}
 
 Let $a_1<a^s$. We will show that there exist an infinite syndetic set
 $\cN_A\subset\bbN$ and $\ve>0$ such that for each $N\in\cN_A$ there exist a 
 subset $P_N\subset\{ 1,2,...,N\}$ with $\# P_N\geq\ve N$ and $\eta>0$ such
 that for every $n\in P_N$ we can find a set $B_{n,N}\subset Y$, $B_{n,N}\in
 \cD$ with $\nu(B_{n,N})>\eta$ satisfying
 \begin{equation}\label{4.8}
 \mu_y\big(\bigcap_{i,j}(R^n_iS^n_j\hat R_i^N\hat S_j^N)^{-1}A\big)>a_1\,\,\,
 \mbox{for all}\,\,\, y\in B_{n,N}.
 \end{equation}
 Integrating the inequality (\ref{4.8}) over $B_{n,N}$ and taking into 
 account (\ref{4.6}) we obtain that for any $N\in\cN_A$,
 \begin{eqnarray*}
 &\frac 1N\sum_{n=1}^N\mu\big(\bigcap_{j=0}^\ell(T_j^n\hat T_j^N)^{-1}A\big)\\
 &\geq\frac 1N\sum_{n=1}^N\mu\big(\bigcap_{i,j}(R^n_iS^n_j\hat R_i^N
 \hat S_j^N)^{-1}A\big)>\ve a_1\eta
 \end{eqnarray*}
 and both (\ref{4.7}) and (\ref{2.8}) will follow.
 
 The set $B_{n,N}$ will be determined by two requirements. For $a_1<a_2<a^s$
 we will require that
 \begin{equation}\label{4.9}
 \mu_y(\bigcap_jS_j^{-n}\hat S_j^{-N}A)>a_2
 \end{equation}
 whenever $n\in P_N$ and $y\in B_{n,N}$. Choose $\ve_1>0$ such that if
 \begin{equation}\label{4.10}
 \mu_y(S_j^{-n}R_i^{-n}\hat S_j^{-N}\hat R_i^{-N}A\triangle S_j^{-n}
 \hat S_j^{-N}A)<\ve_1\,\,\mbox{for all}\,\, 1\leq i\leq r,\, 1\leq j\leq s
 \end{equation}
 (where $\triangle$ denotes the symmetric difference) then (\ref{4.9}) 
 implies (\ref{4.8}). Then we require that (\ref{4.10}) holds true for any
 $n\in P_N$ and $y\in B_{n,N}$.
 
 Suppose now that $P_N$ and $\{ B_{n,N},\, n\in P_N,\, N\in\cN_A\}$ have
 been found so that (\ref{4.10}) is satisfied for all $n\in P_N$,
  $y\in B_{n,N}$ and, in addition, 
  \begin{equation}\label{4.11}
  S_j^n\hat S_j^Ny\in B\quad\mbox{for all}\quad y\in B_{n,N},\, 1\leq n\leq N,
  \, 1\leq j\leq s.
  \end{equation}
  Now, applying Lemma \ref{lem4.2} with $f=\bbI_A,\,\ve<a^s-a_2$ and $\del<
  \frac 12\eta$ we obtain
  \[
  \mu_y(\bigcap_iS_i^{-n}S_j^{-N}A)=\int\prod_jS_j^n\hat S_j^Nfd\mu_y>\prod_j
  \psi(S^n_j\hat S_j^Ny)-\ve\geq a^s-\ve>a_2,
  \]
  with $\psi$ defined in Lemma \ref{lem4.2},
  for all $y\in B_{n,N}$ except for a set $\hat B_{n,N}$ of $y$'s of measure
  $\nu$ less than $\frac 12\eta$ and for $n\not\in\cN_{\ve,\del,N}$. Set
  $\tilde P_N=P_N\setminus\cN_{\ve,\del,N}$ and $\tilde B_{n,N}=
  B_{n,N}\setminus\hat B_{n,N}$ then considering new $P_n=\tilde P_N$ and 
  $B_{n,N}=\tilde B_{n,N}$ we obtain (\ref{4.9}). The problem is reduced to
  finding $P_N$ and $B_{n,N}$ such that (\ref{4.10}) and (\ref{4.11}) are
  satisfied.
  
  Next, we replace (\ref{4.10}) by the requirement that there exists $g\in
  \cH_y=L^2(X,\cB,\mu_y)$ such that
  \begin{equation}\label{4.12}
  \| S_j^nR^n_i\hat S_j^N\hat R^N_i-S_j^n\hat S_j^Ng\|_y<\ve_2,\, 1\leq i
  \leq r,\, 1\leq j\leq s
  \end{equation}
  (where $\|\cdot\|_y=\|\cdot\|_{L^2(X,\mu_y)}$) with $\ve_2<\frac 12\sqrt 
  {\ve_1}$. Since $R_1=\hat R_1=\mbox{id}$ we will have 
  \[
  \| S_j^nR_i^n\hat S_j^N\hat R_i^N\bbI_A-S^n_j\bbI_A\|_y<2\ve_2<\sqrt 
  {\ve_1}
  \]
  which gives (\ref{4.10}) since
  \[
  \| T\bbI_A-S\bbI_A\|^2_y=\int |\bbI_{T^{-1}A}(x)-
  \bbI_{S^{-1}A}(x)|^2d\mu_y(x)=\mu_y(T^{-1}A\triangle S^{-1}A).
  \]
  
 Now recall that $A$ was chosen to comply with conditions of Lemma 
 \ref{lem4.3}. We can therefore find $g_1,g_2,...,g_K\in L^2(X,\mu)$
 and a function $k:\, Y\times G_c\to\{ 1,2,...,K\}$ so that 
 $\| R\bbI_A-g_{k(y,R)}\|_y<\ve_2$ for every $R\in G_c$ and $\nu$-almost all 
 $y$. We define now a sequence of functions $k_{q,Q}:\, Y\times G\times G\to
 \{ 1,2,...,K\}$ by 
 \[
 k_{q,Q}(y,SR,\hat S\hat R)=k(S^q\hat S^Qy,R^q\hat R^Q)
 \]
 for integers $1\leq q\leq Q$ and transformations $R,\hat R\in G_c,\, S,\hat S
 \in G_w$. This is well defined since $G=G_c\times G_w$ is a direct product.
 Then for $\nu$-almost all $y$,
 \begin{equation}\label{4.13}
 \| S^qR^q\hat S^Q\hat R^Q\bbI_A-S^q\hat S^Qg_{k_{q,Q}(y,RS,\hat R\hat S)}\|_y
 =\| R^n\hat R^N\bbI_A-g_{k(S^q\hat S^Qy,R^q\hat R^Q)}\|_{S^q\hat S^Qy}<\ve_2.
 \end{equation}
 
 Fix $q\leq Q$ and $y$ for which (\ref{4.13}) holds true and apply Lemma 
 \ref{lem4.4}(ii) to the function $k(\cdot,\cdot)=k_{q,Q}(y,\cdot,\cdot)$
 on $G\times G$. Independently of $q, Q$ and $y$ there is a finite set
 $\Psi\subset G$ and a number $M$ such that $k_{q,Q}(y,T'R_i^mS_j^m,
 \hat R^m_i\hat S^m_i)$ takes on the same value $k$ for $1\leq i\leq r$, 
 $1\leq j\leq s$, for some $T'\in\Psi$ and some $m$ with $1\leq m\leq M$.
 Then if $T'=R'S'$ and $g_{(q,y)}$ is the corresponding $g_k$ we obtain
 from (\ref{4.13}) for $1\leq i\leq r$, $1\leq j\leq s$ that
 \begin{eqnarray}\label{4.14}
 &\| S_j^{qm}R_i^{qm}\hat S_j^{Qm}\hat R_i^{Qm}\bbI_A-S_j^{qm}\hat S_j^{Qm}
 ((R')^{-q}g_{(q,y)})\|_{(T')^qy}\\
 &=\|(T')^qS_j^{qm}R_i^{qm}\hat S_j^{Qm}\hat R_i^{Qm}\bbI_A-(T')^q(R')^{-q}
 S_j^{qm}\hat S_j^{Qm}g_{(q,y)}\|_{y}\nonumber\\
 &=\|(S'S_j^m)^q(R'R_i^m)^q\hat S_j^{Qm}\hat R_i^{Qm}\bbI_A-(S'S_j^m)^q\hat 
 S_j^{Qm}g_{(q,y)}\|_y<\ve_2\nonumber
 \end{eqnarray}
 where we took into account that $g_{(q,y)}=g_k=g_{k_{q,Q}(y,(R'R_i^m)(S'S_j^m),
 \hat R^m_i\hat S_i^m)}$. We have shown that for every $q=1,...,Q$, $Q\in
 \bbN$ and $\nu$-almost all $y\in Y$ there exist $m$ and $T'$, both having
  a finite range of possibilities, such that (\ref{4.12}) is satisfied with
  $n=qm$ and $N=Qm$ for $(T')^qy$ in place of $y$.
  
  Next, we will produce the set $P_N$ and the sets $B_{n,N},\, n\in P_N$ such
  that both (\ref{4.11}) and (\ref{4.12}) are satisfied for $(y,n),\,
  y\in B_{n,N}$. For each $q$ form the set
  \[
  C_q=\bigcap_{j,m,T'}S_j^{-mq}\hat S_j^{-Qm}(T')^{-q}B\subset Y
  \]
  where the intersection is taken over $j,m,T'$ with $1\leq j\leq s,\, 
  1\leq m\leq M,\, T'\in\Psi$. Using the fact that $(Y,\cD,\nu)$ is 
  a GSZ system we conclude that for each $Q$ from an infinite syndetic set 
  $\cN'\subset\bbN$ there exists $P'_Q\subset\{ 1,...,Q\}$ with $\# P'_N
  \geq\ve Q$ for some $\ve>0$ independent of $Q$ and such that $\nu(C_q)>
  \eta'$ for some $\eta'>0$ and all $q\in P'_Q$.
  
 Now let $y\in C_q$ for $q\in P'_Q$. There exist $m=m(q,y)$ and $T'=T'(q,y)$
 such that $(T')^qy$ (in place of $y$) satisfies (\ref{4.12}) for $n=qm,\,
 N=Qm$ and $q\leq Q$. In addition, $(T')^qy$ also satisfies (\ref{4.11})
 for these $T'$ and $m$ taking into account that
  by the definition of $C_q$ this condition is
 satisfied with $n=mq,\, N=mQ$ by all $(T')^qy$ and all $m$ such that 
 $T'\in\Psi,\, 1\leq m\leq M$ since $(T')^qy\in\bigcap_{j,m}S_j^{-mq}
 \hat S_j^{-mQ}B$ whenever $y\in C_q$, and so $S_j^{mq}\hat S_j^{mQ}(T')^qy
 \in B$.
 
 Let $J$ be the total number of possibilities for $(m,T')$. Then for a subset
 $D_q\subset C_q$ with $\nu(D_q)>\frac {\eta'}{J}$, $m(q,y)$ and $T'(q,y)$
 take a constant value, say, $m(q)$ and $T'(q)$, respectively. We now define
 $n(q)=qm(q)$ and set $P_Q=\{ n(q)\leq Q:\, q\in P'_Q\}$ and $B_{n(q)}=
 (T'(q))^qD_q$. Then $\nu(B_{n(q)})=\nu(D_q)>\eta'/J$, $S_j^{n(q)}
 \hat S_j^{m(q)Q}B_{n(q)}\in B,\, j=1,...,s,$ and
 \[
 \| S_j^{n(q)}\hat S_j^{m(q)Q}R_i^{n(q)}\hat R_i^{m(q)Q}\bbI_A-S_j^{n(q)}
 \hat S_j^{m(q)Q}g'_{(q,y)}\|_y<\ve_2
 \]
 for $y\in B_{n(q)},\, 1\leq i\leq r,\, 1\leq j\leq s$ for an appropriately
 defined $g'_{(g,y)}$. Finally, $\#\{ n(q),\, q\leq Q\}\geq\ve/M$ and the
 gaps of the set $\{ m(q)Q,\, Q\in\cN'\}$ are bounded by $M$ times of the
 maximal gap of $\cN'$. This complets the proof of Proposition \ref{prop4.5},
 as well as of Theorem \ref{thm2.5}.
\end{proof}

\section{Short proofs of Theorems \ref{thm2.2} and \ref{thm2.5}}\label{sec5}
\setcounter{equation}{0}

Recall that $F_k\subset\bbZ^d,\, k=1,2,...$ is called a F\o lner sequence if
the cardinality of the symmetric difference $(\bar n+F_k)\triangle F_k$ is
o$(|F_k|)$ as $k\to\infty$ for any $\bar n\in\bbZ^d$.
Now, suppose that for any F\o lner sequence $F_k\subset\bbZ^2$, $k=1,2,...$,
\[
\liminf_{k\to\infty}\frac 1{|F_k|}\sum_{(n,m)\in F_k}a_{n,m}>0 
\]
(in fact, we will need this only when $F_k$'s are squares). Then there
exists $\ve>0$ and an integer $M\geq 1$ such that in any square 
$R\subset\bbZ^2$ with
 the side of length $M$ we can find $(n,m)\in R$ such that $a_{n,m}>\ve$.
 Indeed, if this were not true then we could find a sequence of squares
 $R_j\subset\bbZ^2$ with sides of length $M_j\to\infty$ 
 as $j\to\infty$ and a sequence
 $\ve_j\to 0$ as $j\to\infty$ such that $a_{n,m}\leq\ve_j$ for all $(n,m)
 \in R_j$. Then, of course,
 \[
 \liminf_{j\to\infty}\frac 1{|R_j|}\sum_{(n,m)\in R_j}a_{n,m}\leq\liminf_{j\to
 \infty}\ve_j=0,
 \]
 which contradicts our assumption since $\{ R_j\}_{j=1}^\infty$ is a F\o lner
 sequence. Clearly, this argument remains true for any $\bbZ^d$ replacing
 squares by $d$-dimensional boxes but we will not need this here.
 
 Now, let $M,\ve>0$ be numbers whose existence was established above and
 assume that $a_{n,m}\geq 0$ for all integer $n$ and $m$. Set
 $Q_j=\{(n,m):\, j(M+1)\leq m<(j+1)(M+1)$ and $0<n\leq j(M+1)\}$. Then $Q_j$ 
 contains $j$ disjoint squares with the side of length $M$, and so
 \[
 \sum_{(n,m)\in Q_j}a_{n,m}\geq\ve j.
 \]
 Hence, there exists $j(M+1)\leq N_j<(j+1)(M+1)$ such that
 \[
 \sum_{n=1}^{N_j}a_{n,N_j}\geq\frac {\ve j}{M+1}.
 \]
 Clearly, $\cN=\{ N_j,\, j=1,2,...\}$ is a set of integers with gaps bounded 
 by $2M$ and
 \[
 \liminf_{j\to\infty}\frac 1{N_j}\sum_{n=1}^{N_j}a_{n,N_j}\geq\frac \ve{(M+1)^2}.
 \]
 
 Next, we will apply the above arguments to the situation of Theorem 
 \ref{thm2.5}. Let $T_j,\hat T_j,\, j=1,...,\ell,$ be as in Theorem \ref{thm2.5}
 commuting measure preserving transformations of a measure space $(X,\cB,\mu)$
 and set $S_j^{(n,m)}=(T_{j-1}^n\hat T_{j-1}^m)^{-1}T_j^n\hat T_j^m,\,
 j=1,...,\ell,$ with
 $S_0^{(n,m)}$ being the identity transformation. Then $S_j^{(n,m)},\, 
 j=0,1,...,\ell,$ are commuting measure preserving transformations of
 $(X,\cB,\mu)$ and $T_j^n\hat T_j^m=S_0^{(n,m)}S_1^{(n,m)}\cdots S_j^{(n,m)},
 \, j=0,1,...,\ell$. Now, it follows from Theorem B of \cite{Au} that for
 any set $A\in\cB$ with $\mu(A)>0$ and any F\o lner sequence $F_k\subset\bbZ^2$,
 \begin{eqnarray*}
 &\lim_{k\to\infty}\frac 1{|F_k|}\sum_{(n,m)\in F_k}\mu\big(\bigcap_{j=0}^\ell
 (T_j^n\hat T_j^m)^{-1}A\big)\\
 &=\lim_{k\to\infty}\frac 1{|F_k|}\sum_{(n,m)\in F_k}\mu\big(\bigcap_{j=0}^\ell
 (S_0^{(n,m)}S_1^{(n,m)}\cdots S_j^{(n,m)})^{-1}A\big)>0,
 \end{eqnarray*}
 i.e. the limit exists and it is positive. Taking 
 $a_{n,m}=\mu\big(\bigcap_{j=0}^\ell(T_j^n\hat T_j^m)^{-1}A\big)$ we obtain by 
 the above arguments that there
 exists an infinite set with bounded gaps $\cN_A$ such that (\ref{2.8})
 holds true, completing the proof of Theorem \ref{thm2.5}.   \qed
 
 Next, we derive a polynomial version of Theorem \ref{thm2.2}. Replace in 
 (\ref{2.1}) the linear terms $p_in+q_iN$ by general polynomials $p_i(n,N),
 \, i=1,...,\ell,$ taking on integer values on integer pairs $n,N$ and such
 that for each $k\in N$ there exists a pair $n,N$ with $p_i(n,N),\, i=1,...,
 \ell,$ all divisible by $k$. Then by Theorem 1.4 in \cite{BLL},
 \[
 \lim_{k\to\infty}\frac 1{|F_k|}\sum_{(n,m)\in F_k}\mu(\bigcap_{i=0}^\ell
 T^{-p_i(n,m)}A)>0
 \]
 for every $A$ with $\mu(A)>0$ and any F\o lner sequence $F_k\subset\bbZ^2$.
 Set $a_{n,m}=\mu(\bigcap_{i=0}^\ell T^{-p_i(n,m)}A)$. Then by the above 
 argument
 there exists an infinite set of positive integers $\cN$ with uniformly bounded
  gaps such that
  \begin{equation*}
  \liminf_{N\to\infty,\, N\in\cN}\frac 1N\sum_{n=1}^N\mu(\bigcap_{i=0}^\ell 
  T^{-p_i(n,N)}A)>0
  \end{equation*}
 providing a polynomial version of (\ref{2.3}).  \qed

\section{Nonconventional polynomial arrays}\label{sec6}
\setcounter{equation}{0} 
\subsection{Proof of Theorem \ref{thm2.8}}\label{subsec6.1}
We start with the proof of Theorem \ref{thm2.8} which proceeds close
to the proof of Theorem D in \cite{BL}. First, by changing functions $f_j$
we can always assume without loss of generality that $P_{ij}(0)=0,\, Q_{ij}(0)
=0,\, i=1,...,\ell,\, j=1,...,k$. If $\ell=1$ and $P_{11}(n,N)=pn$
where $p$ is an integer and $P_{1j}(n)\equiv 0$ when $j>1$ while $Q_{1j}(N)$'s
are functions of $N$ taking on integer values on integers then for any 
measurable $L^2$ function $f$,
\begin{eqnarray}\label{6.1}
&\int(\frac 1N\sum_{n=1}^NT_1^{pn}\hat T_1^{Q_{11}(N)}\cdots
\hat T^{Q_{1k}(N)}f-\int fd\mu)^2d\mu\\
&=\int(\frac 1N\sum_{n=1}^NT^{pn}f-\int fd\mu)^2d\mu\to 0\quad
\mbox{as}\,\, N\to\infty\nonumber
\end{eqnarray}
since $T$ is weakly mixing, and so $T^{p}$ is weakly mixing and,
 in particular, ergodic, and so the result follows from the $L^2$ ergodic
 theorem.

In order to deal with the general case of Theorem \ref{thm2.8} we will
 need the following version of the van der Corput theorem whose proof
 is the same as of Theorem 1.4 in \cite{Be} (see also Theorem 1.5 there),
  and so we refer the reader there. This follows also from uniform
  versions of the van der Corput theorem (see, for instance, \cite{Le}).
  
\begin{lemma}\label{lem6.1} Let $\{ x_{n,N}\}_{n=1}^N,\,N=1,2,...$ be
 a bounded sequence of vectors in a Hilbert space such that
 \begin{equation}\label{6.2}
D-\lim_h\lim_{N\to\infty}\frac 1N\sum_{n=1}^N\langle x_{n,N},\, 
 x_{n+h,N}\rangle=0
 \end{equation}
 where $\langle\cdot,\cdot\rangle$ is the inner product and $D-\lim_h$ denotes 
 the limit as $h\to\infty$ outside a set of integers having zero upper density.
  Then
 \begin{equation}\label{6.3}
 \lim_{N\to\infty}\|\frac 1N\sum_{n=1}^Nx_{n,N}\|=0
 \end{equation}
 where $\|\cdot\|$ is the Hilbert space norm.
 \end{lemma}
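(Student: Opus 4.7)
The plan is to adapt the standard van der Corput/Bergelson argument to the triangular-array setting, with careful control of the $N$-dependence. Let $C = \sup_{n,N}\| x_{n,N}\| < \infty$. First, for any $1 \le H \le N$ one writes
$$\frac{1}{N}\sum_{n=1}^{N}x_{n,N} = \frac{1}{N}\sum_{n=1}^{N}\Bigl(\frac{1}{H}\sum_{h=0}^{H-1}x_{n+h,N}\Bigr) + R_{H,N},$$
where $\| R_{H,N}\| \le 2CH/N$ because the indices at the two ends of $[1,N]$ differ by at most $H$. Applying Jensen's inequality to the convex function $\|\cdot\|^{2}$ yields
$$\Bigl\|\frac{1}{N}\sum_{n=1}^{N}x_{n,N}\Bigr\|^{2} \le \frac{1}{N}\sum_{n=1}^{N}\Bigl\|\frac{1}{H}\sum_{h=0}^{H-1}x_{n+h,N}\Bigr\|^{2} + O(H/N).$$

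Next I would expand the inner norm squared and interchange the order of summation to get
$$\Bigl\|\frac{1}{N}\sum_{n=1}^{N}x_{n,N}\Bigr\|^{2} \le \frac{1}{H^{2}}\sum_{h,h'=0}^{H-1}\frac{1}{N}\sum_{n=1}^{N}\langle x_{n+h,N},x_{n+h',N}\rangle + O(H/N).$$
For each fixed pair $(h,h')$, a shift of the summation index $n \mapsto n-h$ alters the inner average by at most $2C^{2}H/N$, so it coincides, up to $O(H/N)$, with $\frac{1}{N}\sum_{n=1}^{N}\langle x_{n,N},x_{n+(h'-h),N}\rangle$. Setting $r = h'-h$ and denoting $\gamma_{r} = \lim_{N\to\infty}\frac{1}{N}\sum_{n=1}^{N}\langle x_{n,N},x_{n+r,N}\rangle$, whose existence for each fixed $r$ is guaranteed by hypothesis (6.2), passing to $\limsup_{N\to\infty}$ gives
$$\limsup_{N\to\infty}\Bigl\|\frac{1}{N}\sum_{n=1}^{N}x_{n,N}\Bigr\|^{2} \le \frac{1}{H}\sum_{r=-(H-1)}^{H-1}\Bigl(1-\frac{|r|}{H}\Bigr)\gamma_{r}.$$

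To conclude I would invoke the Koopman-von Neumann lemma: if a bounded sequence $\{\gamma_{r}\}$ satisfies $D\text{-}\lim_{r}\gamma_{r}=0$, then the Fej\'er-type averages on the right tend to $0$ as $H\to\infty$. Indeed, given $\ve>0$, pick a set $E\subset\bbZ$ of zero upper density and an $r_{0}$ such that $|\gamma_{r}|<\ve$ for $|r|\ge r_{0},\, r\notin E$; then split the sum according to $r\in E$ or $r\notin E$, bounding the first piece by $(\sup_{r}|\gamma_{r}|)\cdot |E\cap[-H,H]|/H = o(1)$ and the second by $\ve + O(r_{0}/H)$. Letting first $H\to\infty$ and then $\ve\to 0$ yields (6.3). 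The main technical point, which is the only place where the triangular-array feature matters, is to keep the two boundary errors (from the $H$-window averaging and from the reindexing $n \mapsto n-h$) uniformly $O(H/N)$; this forces the order of limits to be $N\to\infty$ first and then $H\to\infty$, which is precisely why the hypothesis (6.2) is phrased with $\lim_{N\to\infty}$ inside $D\text{-}\lim_{h}$.
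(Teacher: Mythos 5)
Your argument is correct: it is the standard van der Corput/Koopman--von Neumann argument (window averaging over $h$, Jensen, reindexing with $O(H/N)$ boundary errors, Fej\'er weights, then the zero-density splitting), which is exactly the proof of Theorem 1.4 in Bergelson's paper that the present paper cites in lieu of giving its own proof, and you correctly identify that the triangular-array feature only forces the order of limits $N\to\infty$ before $H\to\infty$. The only cosmetic points worth a remark are that negative shifts $r=h'-h<0$ are handled via $\gamma_{-r}=\overline{\gamma_r}$ (so the hypothesis on positive $h$ suffices) and that for the finitely many $h$ in the exceptional zero-density set where $\lim_N$ might fail to exist one replaces $\gamma_r$ by a $\limsup$, which changes nothing since those terms are absorbed into the $o(1)$ piece.
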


Next, we will describe the "PET induction" in our circumstances where we
closely follow \cite{BL} and refer the reader there for more details.
Let $P_j,\, j=1,...,k,$ be any polynomials and $Q_j,\, j=1,...,k$ be any
functions taking on integer values on integers and such that $P_j(0)=Q_j(0)=0$.
Similarly to \cite{BL} we will call
\[
\vf(n)=T_1^{P_1(n)}\cdots T_k^{P_k(n)},\, \psi(n)=\hat T_1^{Q_1(n)}\cdots 
\hat T_k^{Q_k(n)}\,\,\mbox{and}\,\,\Phi(n,N)=\vf(n)\psi(N)
\]
$P$-polynomial expressions where $P$ indicates the fact that $Q_i$'s are not
necessarily polynomials. Products of $P$-polynomial expressions and their
inverses are $P$-polynomial expressions, and so they form a group $PE$. Clearly,
if $\Phi(n,N)=\vf(n)\psi(N)\in PE$ then
$\Phi^{-1}(n_0,N)\Phi(n+n_0,N)=\vf^{-1}(n_0)\vf(n+n_0)\in PE$. The degree,
deg$(\vf(n))$ of $\vf(n)=T_1^{P_1(n)}\cdots T_k^{P_k(n)}$ is the maximal
degree of polynomials $P_j,\, j=1,...,k$ and the degree, deg$(\Phi(n,N))$
of a $P$-polynomial expression $\Phi(n,N)=\vf(n)\psi(N)$ is defined as the 
degree of $\vf$. Again, following \cite{BL} we define the weight of a 
$P$-polynomial expression $\Phi(n,N)=\vf(n)\psi(N)$ with $\vf(n)=
T_1^{P_1(n)}\cdots T_k^{P_k(n)}$ as the pair $(r,d)$ such that deg$P_{r+1}=...
=$deg$P_k(n)=0$, deg$P_r(n)=d\geq 1$. The weight $(r,c)$ is greater than 
$(s,d)$ if $r>s$ or if $r=s$ and $c>d$.

Two $P$-polynomial expressions $\Phi_1(n,N)=\vf_1(n)\psi_1(N)$ and
$\Phi_2(n,N)=\vf_2(n)\psi_2(N)$ with $\vf_1(n)=T_1^{P^{(1)}_1(n)}\cdots 
T_k^{P^{(1)}_k(n)}$ and $\vf_2(n)=T_1^{P^{(2)}_1(n)}\cdots T_k^{P^{(2)}_k(n)}$
are called equivalent if they have the same weight $(r,d)$ and the leading
coefficient of the polynomials $P_r^{(1)}$ and $P_r^{(2)}$ coincide, as well.
Any finite subset of $PE$ is called a system and the degree of a system is the
maximal degree of its elements. To every system a weight matrix $(N_{rd},\,
1\leq r\leq k,\, 1\leq d\leq D)$ is associated where $N_{rd}$ is the number of
equivalence classes formed by the elements of the system whose weights are
$(r,d)$ and $D$ is the maximal degree of the polynomials $P_{ij}$ appearing in 
Theorem \ref{thm2.8}. As in \cite{BL} we say that the weight matrix $M'=
(N'_{rd},\, 1\leq r\leq k,\, 1\leq d\leq D)$ precedes the weight matrix
$M=(N_{r,d},\, 1\leq r\leq k,\, 1\leq d\leq D)$ if for some $(r_0,d_0)$, 
$N'_{r_0d_0}=N_{r_0d_0}-1,
\, N'_{rd}=N_{rd}$ when $r\geq r_0$ and $d\geq d_0$ except for $r=r_0$ and
$d=d_0$, $N_{rd}=0$ and $N'_{rd}$ are arbitrary nonnegative integers when
$r\leq r_0$ and $d\leq d_0$ except for $r=r_0$ and $d=d_0$ (for a picture
explanation see \cite{BL}). 

Now observe that the system appearing in 
(\ref{6.1}) has the weight matrix $M_0=(N_{rd})$ where $N_{11}=1$ and 
$N_{rd}=0$ if $(r,d)\ne (1,1)$. Thus, (\ref{6.1}) proves Theorem \ref{thm2.8}
for any system with the weight matrix $M_0$. Next, we proceed step by step 
considering systems with weight matrices $M_0,M_1,M_2,...,M_K$ such that each
$M_i$ preceeds $M_{i+1},\, i=0,1,...,K-1$ arriving finally to the matrix $M_K$
with arbitrary predefined weights $N_{rd},\, 1\leq r\leq k,\, 1\leq d\leq D$
(for a graphical explanation of this see \cite{BL}). Our goal is to show that
if Theorem \ref{thm2.8} is valid for any
system with the weight matrix $M_i$ then it is valid for any system with the
weight matrix $M_{i+1}$ which by induction will yield Theorem \ref{thm2.8}.

Next, we remark that without loss of generality we can assume that 
$\int f_id\mu=0$ for any $i=1,...,\ell$ which is the result of the equality
\[
\prod_{i=1}^\ell a_i-\prod_{i=1}^\ell b_i=\sum_{E\subset\{ 1,...,\ell\},E\ne
\emptyset}\prod_{i\in E}(a_i-b_i)\prod_{i\not\in E}b_i.
\]
Indeed, taking $a_i=T_1^{P_{i1}(n)}\cdots T_k^{P_{ik}(n)}\hat T_1^{Q_{i1}(N)}
\cdots\hat T_k^{Q_{ik}(N)}f_i$ and $b_i=\int f_id\mu$ we transform the left
hand side of (\ref{2.11}) into a sum of similar product expressions where
all functions have zero integrals and the result to be proved now is that all
corresponding limits are zero. Thus, writing
\[
\Phi_i(n,N)=\vf_i(n)\psi_i(N)\,\,\mbox{with}\,\,\vf_i(n)=T_1^{P_{i1}(n)}\cdots
T_k^{P_{ik}(n)}
\]
\[
\mbox{and}\,\,\psi_i(N)=\hat T_1^{Q_{i1}(N)}\cdots\hat T_k^{Q_{ik}(N)}
\]
we have to prove that
\begin{equation}\label{6.4}
\lim_{N\to\infty}\bigg\|\frac 1N\sum_{n=0}^{N-1}\prod_{i=1}^\ell\Phi_i(n,N)
f_i\bigg\|_{L^2}=0.
\end{equation}

As in \cite{BL} we can assume without loss of generality that $T_1,...,T_k$
are linearly independent elements of the basis of the finitely generated free
abelian group $G$. Then $\vf(n)=T_1^{P_1(n)}\cdots T_k^{P_k}=\mbox{id}$ for some
polynomials $P_1,...,P_k$ implies $P_1=\cdots =P_k=0$. By Lemma \ref{lem6.1},
(\ref{6.4}) would follow if
\begin{equation}\label{6.5}
D-\lim_h\lim_{N\to\infty}\frac 1N\sum_{n=0}^{N-1}L(n,h)=0
\end{equation}
where
\[
L(n,h)=\big\langle\prod_{i=1}^\ell\Phi_i(n,N)f_i,\,\prod_{i=1}^\ell\Phi_i
(n+h,N)f_i\big\rangle .
\]

 Next, we will need the following result.
 
 \begin{lemma}\label{lem6.2} Let nonconstant polynomials $P_1(n,N),\, 
 P_2(n,N),\, ..., P_k(n,N)$ of $n$ and $N$ be essentially distinct and
 nontrivially depend on $n$. Then
 for each sufficiently large $h$ the polynomials $P_1(n,N),\, P_2(n,N),\,
  ...,P_k(n,N),\, P_1(n+h,N),\,...,P_k(n+h,N)$ are pairwise essentially 
  distinct (where $h$ is viewed as a  constant) except for pairs
  $P_i(n,N),\, P_i(n+h,N)$ where $P_i(n,N)=p_in+Q_i(N)$ and then
  $P_i(n+h,N)-P_i(n,N)=a_ih$.
 \end{lemma}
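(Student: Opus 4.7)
The plan is to view, for each ordered pair of indices $i,j$, the polynomial $R_{ij}(n,N;h) := P_i(n,N) - P_j(n+h,N)$ as an element of the polynomial ring $\bbR[h][n,N]$, that is, a polynomial in $n$ and $N$ whose coefficients are themselves polynomials in $h$. The condition that $R_{ij}(\cdot,\cdot;h_0)$ is constant in $n$ and $N$ is equivalent to saying that each coefficient of a nonconstant monomial $n^l N^b$, $(l,b)\neq(0,0)$, vanishes at $h=h_0$. Each such coefficient is a polynomial in $h$; if it is not identically zero, it has only finitely many zeros. So it will suffice to show that for every pair $(i,j)$ either at least one such coefficient is a nonzero element of $\bbR[h]$, or the pair $(i,j)$ falls under the stated exception.

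Writing $P_i(n,N) = \sum_{a,b} c^{(i)}_{ab} n^a N^b$ and expanding $P_j(n+h,N)$ by the binomial theorem, I would compute that the coefficient of $n^l N^b$ in $R_{ij}$ equals $c^{(i)}_{lb} - \sum_{a\geq l} c^{(j)}_{ab}\binom{a}{l} h^{a-l}$. For the case $i\neq j$: if all of these, for $(l,b)\neq(0,0)$, were identically zero as polynomials in $h$, then specializing to $h=0$ would give $P_i(n,N) - P_j(n,N) = c^{(i)}_{00} - c^{(j)}_{00}$, contradicting the hypothesis that the $P_j$'s are essentially distinct. Hence at least one coefficient is a nonzero element of $\bbR[h]$, and its root set is finite.

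For the case $i=j$ the coefficient simplifies to $-\sum_{a>l} c^{(i)}_{ab}\binom{a}{l} h^{a-l}$, which is identically zero in $h$ exactly when $c^{(i)}_{ab}=0$ for every $a>l$. Requiring this simultaneously for all $(l,b)\neq(0,0)$ forces $c^{(i)}_{ab}=0$ except possibly when $a=0$ (any $b$) or $(a,b)=(1,0)$, i.e.\ $P_i(n,N) = p_in + Q_i(N)$ with $p_i = c^{(i)}_{10}$; and then a direct computation gives $P_i(n+h,N) - P_i(n,N) = p_i h$, which is exactly the exceptional pair described in the lemma (with $a_i = p_i$, which is nonzero thanks to the hypothesis that $P_i$ depends nontrivially on $n$). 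If $P_i$ is not of this form, then some coefficient of $R_{ii}$ is a nonzero element of $\bbR[h]$ with finitely many zeros.

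Finally, taking $h$ strictly larger than every root of the finitely many nonzero polynomials in $h$ produced above (one for each non-exceptional pair $(i,j)$) gives the desired conclusion for all sufficiently large $h$. The only delicate point is the characterization of the exceptional case in the $i=j$ analysis, where one must verify that the simultaneous vanishing in $h$ of all monomial coefficients pins down precisely the shape $p_i n + Q_i(N)$; everything else is essentially a translation of essential distinctness into the nonvanishing of a polynomial identity in $h$.
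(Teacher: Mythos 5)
Your proof is correct, and while it rests on the same basic mechanism as the paper's -- the shift $n\mapsto n+h$ perturbs the coefficients of $P_j$ by amounts that are polynomial in $h$, so each non-degenerate pair excludes only finitely many values of $h$ -- you organize the argument genuinely differently. The paper fixes the common degree $d$ of $P_i$ and $P_j$ in $n$, tracks only the two leading coefficients $n^dV(N)+n^{d-1}W(N)$, notes that the shift replaces $W_j(N)$ by $W_j(N)+dhV_j(N)$ with $V_j\neq0$, and then runs a case analysis on $d>1$, $d=1$ with $i\neq j$, and $d=1$ with $i=j$, the last case isolating the exceptional form $p_in+Q_i(N)$. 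You instead expand the full difference in $\bbR[h][n,N]$ and dispose of the case $i\neq j$ in one stroke by specializing the hypothetically all-vanishing coefficient polynomials at $h=0$, which would force $P_i-P_j$ to be constant against the essential-distinctness hypothesis; this neatly bypasses the paper's somewhat delicate discussion of the subcase $d=1$, $i\neq j$. Your $i=j$ analysis, requiring $\sum_{a>l}c^{(i)}_{ab}\binom{a}{l}h^{a-l}\equiv0$ for all $(l,b)\neq(0,0)$, correctly pins down the surviving coefficients as those with $a=0$ or $(a,b)=(1,0)$, i.e.\ exactly the exceptional shape $p_in+Q_i(N)$ with $p_i=c^{(i)}_{10}\neq0$ by the nontrivial dependence on $n$. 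The only (trivial) omission is the pairs $\big(P_i(n+h,N),P_j(n+h,N)\big)$ with both arguments shifted, which are essentially distinct simply because substituting $n+h$ for $n$ preserves nonconstancy of $P_i-P_j$; the paper dismisses these in one sentence and your framework of ordered pairs $R_{ij}$ should be supplemented by the same remark.
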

 \begin{proof} Clearly, $P_1(n+h,N),\,..., P_k(n+h,N)$ are essentially distinct
 since this was true for $P_1(n,N),\, P_2(n,N),\, ...,P_k(n,N)$. It remains to 
 show that $P_i(n,N)$ and $P_j(n+h,N)$ are essentially distinct for any 
 $i,j=1,...,k$ provided $h$ is large enough and either $i\ne j$ or $i=j$ and
 $P_i(n,N)$ does not have the form $P_i(n,N)=p_in+Q_i(N)$. 
 Clearly, this is true if $P_i$ and $P_j$ have
 different degrees in $n$, and so we can assume that they have the same degree
 $d$ in $n$. Then we can write $P_i(n,N)=n^dV_i(N)+n^{d-1}W_i(N)
 +r_i(n,N)$ and $P_j(n,N)=n^dV_j(N)+n^{d-1}W_j(N)+r_j(n,N)$ where $V_i(N)$,
  $V_j(N)$ are nonzero while $W_i(N)$, $W_j(N)$ are arbitrary polynomials in
 $N$ only and $r_i(n,N)$, $r_j(n,N)$ are polynomials of degree less than
  $d-1$ in $n$. Then $P_j(n+h,N)=n^dV_j(N)+n^{d-1}(W_j(N)+dhV_j(N))+
  \tilde r_{j,h}(n,N)$ where $\tilde r_{j,h}(n,N)$ is a polynomial whose degree 
  in $n$ is less than $d-1$ having coefficients depending on $h$. Since
  $V_i(N)$ is a nonzero polynomial then for any $h$ large enough 
  $W_i(N)+dhV_i(N)\ne W_j(N)$ and if $d>1$ then $P_j(n+h,N)$ and $P_i(n,N)$
  are essentially distinct provided $h$ is large enough. The case $d=0$ is
  ruled out by our assumptions. If $d=1$ and $i\ne j$ then either $V_i\ne V_j$ 
  or $W_i\ne W_j$ and either $W_i$ or $W_j$ is nonconstant. In both of these
  cases $P_j(n+h,N)$ and $P_i(n,N)$ are essentially distinct. Next, if $d=1$
  and $i=j$ then $P_i(n+h,N)-P_i(n,N)=hV_i(N)$, and so $P_i(n+h,N)$ and
  $P_i(n,N)$ are essentially distinct if and only if $V_i$ is nonconstant
  concluding the proof of the lemma (where, in fact, we did not use that 
  $P_i$'s depend polynomially on $N$).
 \end{proof}
 
 Observe, that if deg$(\vf_i(n))\geq 2$, $\vf_i(n)=T_1^{P_{i1}(n)}\cdots
 T_k^{P_{ik}(n)}$ then $\max_{1\leq j\leq k}$deg$(P_{ij}(n))\geq 2$, and
 it follows from Lemma \ref{lem6.2} that $\vf_i(n+h)\vf_i^{-1}(h)$ depends
 nontrivially on $n$ provided $h$ is large enough. Rearranging $P$-polynomial
 expressions if needed, we can assume that deg$(\vf_i(n))=1$ for $i=1,...,q$
 and deg$(\vf_i(n))\geq 2$ for $i=q+1,...,k$. The condition deg$(\vf_i(n))=1$
 means that $P_{ij}(n)=p_{ij}n$ for some integers $p_{ij},\, j=1,...,k.$ Hence,
 in this case $\vf_i(n+h)=\vf_i(n)\vf_i(h)$. Thus, if $\Phi_i(n,N)=\vf_i(n)
 \psi_i(N)$ we can write 
 \begin{eqnarray*}
 & L(n,h)=\int\prod_{i=1}^q\Phi_i(n,N)(f_i\cdot\vf_i(h)f_i)\prod_{i=q+1}^k
 \Phi_i(n,N)f_i\\
 &\times\prod_{i=q+1}^k\Phi_i(n+h,N)\vf_i^{-1}(h)(\vf_i(h)f_i)d\mu=\int
 \prod_{i=1}^{k'}\tilde\Phi_i(n,N)\tilde f_id\mu
 \end{eqnarray*}
 where $k'=2k-q,\,\tilde f_i$ is either $f_l,\,\vf_l(h)f_l$ or it is 
 $f_l\cdot\vf_l(h)f_l$ for some $l$ between 1 and $k$ and $\tilde\Phi_l(n,N)$
 is either $\Phi_l(n,N)$ for some $l$ between 1 and $k$ or it is $\Phi_l(n+h,N)
 \vf_i^{-1}(h)$ for some $l$ between $q+1$ and $k$.
 
 Consider the new system $\tilde A_h=\{\Phi_i(n,N),\,\Phi_i(n+h,N)
 \vf_i^{-1}(h),\, i=1,...,k\}$ and suppose, without loss of generality, that 
 $\tilde\Phi_1(n,N)$ has the minimal weight in $\tilde A_h$. Since all
 $\vf_i(n)\ne \mbox{id}$ then $w(\tilde\Phi_1(n,N)$ is measure preserving and
 we can write 
 \begin{equation}\label{6.6}
 L(n,h)=\int\tilde f_1\cdot\prod_{i=2}^{k'}\hat\Phi_i(n,N)\tilde f_id\mu
 \end{equation}
 where $\hat\Phi_i(n,N)=\tilde\Phi_i(n,N)\tilde\Phi_1^{-1}(n,N)$. It follows 
 from the assumptions of Theorem \ref{thm2.8} that $\vf_i(n)\not\equiv\vf_l(n)$
 and $\vf_i(n+h)\not\equiv\vf_l(n+h)$ for $i,l=1,...,k,\, i\ne l$. Writing
 $\tilde\Phi_i(n,N)=\tilde\vf_i(n)\tilde\psi_i(N)$ we see from here and 
 Lemma \ref{lem6.2} that $\tilde\vf_i(n)\not\equiv\tilde\vf_l(n)$ for
 $i\ne l$ and large enough $h$. Writing $\hat\Phi_i(n,N)=
 \hat\vf_i(n)\hat\psi_i(N)$ we conclude from here that $\hat\vf_i(n)\not\equiv
 \mbox{id}$ and $\hat\vf_i(n)\not\equiv\hat\vf_l(n)$ for $i,l=2,...,k',\, i\ne l$
 for all $h$ large enough.
 
 Introduce the new system $A_h=\{\hat\Phi_i(n,N),\, i=2,...,k'\}$. In the
 same way as in \cite{BL} (refering the reader for more explanations there)
 we conclude that the weight matrix of $A_h$ precedes that of $A$. In order
 to invoke PET-induction we assume that Theorem \ref{thm2.8} holds true for
 all systems whose weight matrices precede that of $A$. Hence, we have for
 $A_h$,
 \begin{equation}\label{6.7}
 \bigg\|\frac 1N\sum_{n=0}^{N-1}\prod_{i=2}^{k'}\hat\Phi_i(n,N)\tilde f_i-
 \prod_{i=2}^{k'}\int\tilde f_id\mu\bigg\|_{L^2(X,\mu)}=\ve(N)\to 0
 \end{equation}
 as $N\to\infty$. Then by the Cauchy inequality
 \begin{eqnarray}\label{6.8}
 &\bigg\vert\frac 1N\sum_{n=0}^{N-1}L(n,h)-\prod_{i=1}^k\int\tilde f_id\mu
 \bigg\vert\\
 &=\big\vert\int\tilde f_1\big(\frac 1N\sum_{n=0}^{N-1}\prod_{i=2}^{k'}
 \hat\Phi_i(n,N)\tilde f_i-\prod_{i=2}^{k'}\int\tilde f_id\mu\big)d\mu\bigg\vert
 \leq\|\tilde f_1\|_{L^2(X,\mu)}\ve(N).\nonumber
 \end{eqnarray}
 Hence, by (\ref{6.6})--(\ref{6.8}),
 \begin{equation}\label{6.9}
 L(h)=\lim_{N\to\infty}\frac 1N\sum_{n=0}^{N-1}L(n,h)=\prod_{i=1}^{k'}\int
 \tilde f_id\mu.
 \end{equation}
 
 If one of $P_{ij}(n),\, j=1,...,k$ is not linear then deg$(\Phi_i(n,N))=
 $deg$(\vf_i(n))\geq 2$ and $\tilde f_l=f_k$ for some $l\leq k'$, and so 
 the last product in (\ref{6.6}) equals zero yielding 
 \begin{equation}\label{6.10}
 D-\lim_hL(h)=0.
 \end{equation}
 Otherwise, deg$(\Phi_i(n,N))=$deg$(\vf_i(n))=1$ for all $i$ and then $k'=k$,
 $\tilde f_i=f_i\cdot\vf_i(h)f_i$ and $\vf_i(n)=S_i^n$ for some $S_i\in G$,
 $S_i\ne \mbox{id}$. Then by weak mixing
 \[
 D-\lim_h\int f_i\cdot S_i^hf_id\mu=0
 \]
 which together with (\ref{6.9}) yields again (\ref{6.10}) concluding the 
 proof of Theorem \ref{thm2.8} since the initial step of the induction is
  given by (\ref{6.1}).                \qed

\subsection{Nonconvergence under weak mixing}\label{subsec6.2}
Next, we will show that, in general, weak mixing of $T$ is not enough
to ensure $L^2$-convergence in (\ref{1.3}) for general polynomials $P_j(n,N),\,
j=1,...,\ell$ taking on integer values on integers even in the "conventional"
case $\ell=1$. Consider the sum
\begin{equation}\label{6.11}
S_N=\sum_{n=1}^NT^{nN}f
\end{equation}
where $T$ is a measure preserving transformation of a separable probability
space $(X,\cB,\mu)$ and $f$ is a bounded measurable function. Recal, that
the Koopman operator $U_Tf(x)=f(Tx)$ is unitary and it has a spectral 
representation in the form
\begin{equation}\label{6.12}
U_T=\int_\Gam e^{2\pi iu}dE_u
\end{equation}
where $\{ e^{2\pi iu},\, u\in\Gam\}$ is the spectrum of $U_T$ and $E$ is
the corresponding projection operator valued spectral measure (see, 
for instance, \cite{Ha2} or \cite{Ru}). Then
\[
U_T^{nN}=\int_\Gam e^{2\pi iunN}dE_u,
\]
and so
\begin{equation}\label{6.13}
\| T^{nN}f-f\|_{L^\infty}\leq\sup_{u\in\Gam}|e^{2\pi iunN}-1|\leq
2\pi\sup_{u\in\Gam}\inf_{m\in\bbZ}|unN-m|.
\end{equation}

Fix a small $\ve>0$ and for each $M\in\bbN$ set 
\[
\Gam_{\ve,M}=\{ u:\,\inf_{m\in\bbZ}|uM-m|\leq\ve\}.
\]
Observe that if $u\in\Gam_{\ve,N}$ then $nu\in\Gam_{n\ve,N}$ and 
$\Gam_{\ve,N}\subset\Gam_{n\ve,nN}$. Define inductively $N_0=1$ and
$N_{k+1}=[\frac {5N^2_k}\ve],\, k=0,1,2,...$ where $[a]$ is the integral
part of $a$. Set also $\ve_k=\frac \ve{N_k},\, k=0,1,...$. Then
\begin{equation}\label{6.14}
\Gam_{\ve,nN_k}\subset\Gam_{\ve_k,N_k}\quad\mbox{for all}\,\, n=1,2,...,N_k
\end{equation}
and $\Gam_\ve=\bigcap_{k=1}^\infty\Gam_{\ve_k,N_k}$ is a Cantor like set,
in particular, it is a perfect set and for any $k$,
\begin{equation}\label{6.15}
\max_{1\leq n\leq N_k}\sup_{u\in\Gam_\ve}|e^{2\pi iunN_k}-1|\leq 2\pi\ve.
\end{equation}

Let $\nu_\ve$ be a continuous (non-atomic) probability measure on $\Gam_\ve$,
say, constructed in the same way as the Cantor distribution on the standard
Cantor set. Next, we introduce a spectral measure $E^{(\ve)}$ concentrated
on $\Gam_\ve$ by the standard formula $E^{(\ve)}_Ug=\bbI_Ug$ for each 
measurable function $g$ on $\Gam_\ve$ and a measurable set $U\subset\Gam_\ve$
where $\bbI_U$ is the indicator of $U$. The spectral measure $E^{(\ve)}$ is
continuous considering it on the probability space $(\Gam_\ve,\nu_\ve)$ since
for each $u\in\Gam_\ve$ any function $\bbI_{\{ u\}}g$ is zero $\nu_\ve$-almost
everywhere. Next, we can find a transformation $T$ such that its Koopman
operator $U_T\vf=T\vf$ has the spectral representation
\[
U_T=\int_{\Gam_\ve}e^{2\pi iu}dE_u^{(\ve)}
\]
(see, for instance, Ch. 4 in \cite{EFHN}) and since $E^{(\ve)}$ is a continuous
spectral measure then $T$ is weakly mixing (see, for instance, \cite{Ha1} or
\cite{Wa}).

By (\ref{6.15}),
\begin{equation}\label{6.16}
\| T^{nN_k}f-f\|_{L^2}\leq 2\pi\ve\| f\|_{L^2}
\end{equation}
for any $n=1,2,...,N_k$, and all $k=1,2,...$. Hence
\[
\| f\|_{L^2}-\|\frac 1{N_k}S_{N_k}\|_{L^2}\leq
\|\frac 1{N_k}S_{N_k}-f\|_{L^2}\leq 2\pi\ve\| f\|_{L^2}.
\]
Now, choose a function $f$ such that $\int fd\mu=0$ and $\int|f|d\mu>0$. 
If the $L^2$ ergodic theorem holds true for the averages $\frac 1NS_N$ then
$\|\frac 1{N_k}S_{N_k}\|_{L^2}\to 0$ as $k\to\infty$ which leads to the 
contradiction in the above inequality if $\ve<\frac 1{2\pi}$.   \qed

\subsection{Proof of Theorem \ref{thm2.9}}\label{subsec6.3}
For the proof of Theorem \ref{thm2.9} we will need the following result.
\begin{lemma}\label{lem6.3} Let $P(n,N)$ be a nonconstant polynomial of $n$
and $N$ taking on integer values on integers. Set
\[
M_K(N)=|\{ 1\leq n\leq N:\, |P(n,N)|\leq K\}|
\]
where $|\{\cdot\}|$ denotes the cardinality of a set in brackets and if
$P(n,N)=P(N)$ does not depend on $n$ then we set $M_K(N)=N$ if $|P(N)|\leq K$
and $M_K(N)=0$, for otherwise. If $P(n,N)$ nontrivially depends on $n$ then
\begin{equation}\label{6.17}
M_K(N)\leq (2K+1)\mbox{deg}_nP
\end{equation}
where deg$_n$ is the degree of the polynomial in $n$ considering $N$ as a 
constant. If $P(n,N)=P(N)$ depends only on $N$ then there exists $N_0$ such
that $|P(N)|>K$ for all $N\geq N_0$, and so $M_K(N)=0$ for such $N$. In both
cases $\lim_{N\to\infty}\frac 1NM_K(N)=0$.
\end{lemma}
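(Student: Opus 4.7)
The proof is essentially a routine consequence of the basic fact that a nonzero polynomial of degree $d$ in one variable has at most $d$ roots, so I would structure it as a short case analysis.

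For the case where $P(n,N)$ depends nontrivially on $n$, the plan is to write $P(n,N)=\sum_{i=0}^d a_i(N)n^i$ where $d=\deg_n P\geq 1$ and the coefficients $a_i(N)$ are polynomials in $N$ with $a_d$ not identically zero. For each fixed $N$ with $a_d(N)\neq 0$, the map $n\mapsto P(n,N)$ is a polynomial of degree exactly $d$ in $n$, so for every integer $k$ with $|k|\leq K$ the equation $P(n,N)=k$ has at most $d$ solutions. Summing over the $2K+1$ admissible values of $k$ gives $M_K(N)\leq(2K+1)d$. Since $a_d$ is a nonzero polynomial in $N$, it vanishes at only finitely many integers $N$, so this bound holds for all sufficiently large $N$, yielding $M_K(N)/N\to 0$.

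For the case $P(n,N)=P(N)$ depending only on $N$, the hypothesis that $P$ is nonconstant (as a polynomial in $n$ and $N$) forces $P(N)$ to be a nonconstant polynomial in $N$, hence $|P(N)|\to\infty$. Thus there exists $N_0$ with $|P(N)|>K$ for all $N\geq N_0$, and $M_K(N)=0$ for such $N$, so trivially $M_K(N)/N\to 0$.

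There is essentially no obstacle here; the only mild subtlety is that the uniform bound $M_K(N)\leq (2K+1)\deg_n P$ can fail on the finite exceptional set of $N$'s for which $a_d(N)=0$ (where the degree in $n$ drops, possibly to zero, and $P(\cdot,N)$ could accidentally be a small constant). However, this finite exceptional set contributes nothing to the limit $\lim_{N\to\infty}M_K(N)/N$, so the final conclusion of the lemma is unaffected. I would explicitly note this point so the reader sees why the stated bound and the limit assertion are compatible.
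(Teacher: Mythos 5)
Your proof is correct and follows essentially the same route as the paper: bounding the number of solutions of $P(n,N)=k$ for each of the $2K+1$ values of $k$ by the degree in $n$, and using $|P(N)|\to\infty$ in the degenerate case. Your explicit remark that the bound (\ref{6.17}) can fail on the finite set of $N$ where the leading coefficient $a_d(N)$ vanishes (while the limit assertion survives) is a valid refinement of a point the paper's one-line argument passes over silently.
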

\begin{proof} For any $k=0,\pm 1,\pm 2,...,\pm K$ there exists at most 
deg$P$ solutions in $n$ of the equation $P(n,N)=k$, and so (\ref{6.17})
follows. If $P(n,N)=P(N)$ is nonconstant then $|P(N)|\to\infty$ as $N\to\infty$
and the second assertion follows, as well.
 \end{proof}
 
 Next we can prove Theorem \ref{thm2.9}. As before, without loss of generality
 we can assume that, at least, one of functions $f_j$ has zero integral with 
 respect to $\mu$. Set
 \[
 x_{n,N}=\prod_{j=1}^\ell T^{P_j(n,N)}f_j
 \]
 and in order to prove Theorem \ref{thm2.9} we have to show that
 \begin{equation}\label{6.18}
 \lim_{N\to\infty}\|\frac 1N\sum_{n=1}^Nx_{n,N}\|_{L^2}=0.
 \end{equation}
 which according to Lemma \ref{lem6.1} will follow if (\ref{6.2}) holds true.
 
 Without loss of generality assume that $1,2,...,k,\, k\leq\ell$ are all 
 indexes $j$ such that $P_j(n,N)=p_jn+Q_j(N)$ for some nonzero integers
 $p_j$ and polynomials $Q_j$ in $N$ taking on integer values on integers.
 Then
 \begin{eqnarray*}
 &\langle x_{n,N},x_{n+h,N}\rangle=\int\prod_{j=1}^\ell T^{P_j(n,N)}f_j
 \prod_{j=1}^\ell T^{P_j(n+h,N)}f_jd\mu\\
 &=\int\prod_{j=1}^kT^{p_jn+Q_j(N)}(f_jT^{p_jh}f_j)\prod_{j=k+1}^\ell
 T^{P_j(n,N)}f_j\prod_{j=1}^\ell T^{P_j(n+h,N)}f_jd\mu.
 \end{eqnarray*}
 By Lemma \ref{lem6.2}, $P_1(n,N),...,P_\ell(n,N);P_{k+1}(n+h,N),...,
 P_\ell(n+h,N)$
 are essentially distinct polynomials, and so their pairwise differences
 $p_{ij}^{(1)}(n,N)=P_i(n,N)-P_j(n,N),\, p_{ij}^{(2)}(n+h,N)=P_i(n+h,N)-
 P_j(n+h,N),\, i,j=1,...,\ell,\, i\ne j$ and $p_{ij}^{(3)}(n,N)=P_i(n,N)-
 P_j(n+h,N),\, i=1,...,\ell,\, j=k+1,...,\ell$ are nonconstant polynomials 
 of $n$ and $N$. Since $T$ is strongly $2\ell$-mixing then for any $\ve>0$ 
 and any bounded measurable functions $g_1,...,g_L$ with $L\leq 2\ell$
 there exists $K_\ve>0$ such that 
 \[
 |\int\prod_{j=1}^{L}T^{m_j}g_jd\mu-\prod_{j=1}^L\int g_jd\mu|<\ve\,\,
 \mbox{provided}\,\,\min_{1\leq i,j\leq L,i\ne j}|m_i-m_j|>K_\ve.
 \]
  By Lemma \ref{lem6.3},
 \[
 \lim_{N\to\infty}\frac 1N\big\vert\bigcup_{l=1}^3\bigcup_{i,j}\{ 1\leq n
 \leq N:\, |p_{ij}^{(l)}(n,N)|\leq K_\ve\}\big\vert=0
 \]
 where $i,j$ run over indexes appearing in the above definitions of
 $p_{ij}^{(l)}$'s.
 Hence, for $h$ large enough choosing $K_\ve$ for functions $g_j$ equal
 either to some $f_l$ or to $f_lT^{p_lh}f_l$ we obtain,
 \begin{eqnarray*}
 &\limsup_{N\to\infty}\big\vert\frac 1N\sum_{n=1}^N\langle x_{n,N},\, 
 x_{n+h,N}\rangle\\
 &-\prod_{j=1}^k\int f_jT^{p_jh}f_jd\mu\prod_{j=k+1}^\ell
 (\int f_jd\mu)^2\big\vert<\ve
 \end{eqnarray*}
 and since $\ve>0$ is arbitrary we obtain that
\begin{eqnarray*}
&\lim_{N\to\infty}\frac 1N\sum_{n=1}^N\langle x_{n,N},\, 
x_{n+h,N}\rangle\\
&=\prod_{j=1}^k\int f_jT^{p_jh}f_jd\mu\prod_{j=k+1}^\ell
 (\int f_jd\mu)^2.
 \end{eqnarray*}
 Finaly, relying on strong mixing we let $h\to\infty$ and obtain
 \[
 \lim_{h\to\infty}\lim_{N\to\infty}\frac 1N\sum_{n=1}^N\langle 
 x_{n,N},\, x_{n+h,N}\rangle =\prod_{j=1}^\ell(\int f_jd\mu)^2=0
 \]
 since one of integrals $\int f_jd\mu$ is zero, completing the proof of
 Theorem \ref{thm2.9}.   \qed

\bibliography{matz_nonarticles,matz_articles}
\bibliographystyle{alpha}

\end{document}